\documentclass[11pt, reqno]{amsart}
\usepackage{geometry}
\geometry{ margin=1in}
\usepackage{amsmath}
\usepackage{fancyhdr}
\usepackage{amsthm,mathrsfs}
\usepackage{amsfonts}
\usepackage{amssymb}
\usepackage{amscd}
\usepackage{graphicx}
\usepackage{afterpage}
\usepackage{enumerate}
\usepackage{bm}
\usepackage{cite}
\usepackage{cases}
\usepackage{caption}
\usepackage[colorlinks=true, urlcolor=blue,  linkcolor=blue, citecolor=blue]{hyperref}
\usepackage{babel}
\usepackage{prettyref}
\usepackage{pgfplots} 
\pgfplotsset{compat=1.17}
\usepackage{booktabs}
\usepackage[linesnumbered,ruled,vlined]{algorithm2e}
\usepackage{algpseudocode}
\usepackage{float}
\usepgfplotslibrary{groupplots}
\makeatletter
\newtheorem{theorem}{Theorem}[section]

\newtheorem{claim}[theorem]{Claim}

\newtheorem{corollary}[theorem]{Corollary}

\newtheorem{definition}[theorem]{Definition}
\newtheorem{example}[theorem]{Example}

\newtheorem{lemma}[theorem]{Lemma}

\newtheorem{proposition}[theorem]{Proposition}
\newtheorem{remark}[theorem]{Remark}




\newcommand{\wlim}{\mathop{\mathrm{w^{*}\!-\!lim}}\limits}

\title[]{Arens Products and Asymptotic Structures on Ch\'{e}bli--Trim\`eche Hypergroups under Low Regularity Conditions}

\author[S. Hashemi Sababe]{Saeed Hashemi Sababe$^*$}
\address[S. Hashemi Sababe]{R\&D Section, Data Premier Analytics, Canada}
\email{hashemi\_1365@yahoo.com}
\thanks{Corresponding author}

\subjclass[2020]{Primary 43A62, 43A20; Secondary 43A15, 46H20, 46B20.}
\keywords{Hypergroups; Arens product; topological centre; strong Arens irregularity; Sturm--Liouville operators; asymptotic analysis; harmonic analysis.}

\begin{document}
\sloppy

\maketitle

\begin{abstract}
We investigate the Arens products on the second duals of convolution algebras associated with 
Ch\'{e}bli--Trim\`{e}che hypergroups, particularly focusing on the left and right topological centres of 
$L^{1}(H)^{\prime\prime}$ and $M(H)^{\prime\prime}$. 
Building on the recent framework established by Losert, we relax the classical smoothness assumptions 
on the underlying Sturm--Liouville function $A$ and develop new asymptotic analysis tools for measure-valued
and low-regularity perturbations. 
This allows us to extend the existence and continuity of the asymptotic measures $\nu_{x}$ and the limit measure 
$\nu_{\infty}$ to a strictly larger class of hypergroups.
We further provide new necessary and sufficient conditions for strong Arens irregularity of $L^{1}(H)$ 
in terms of the spectral behaviour of $\nu_{\infty}$, explore weighted (Beurling-type) hypergroup algebras, 
and obtain the first detailed comparison between the left and right topological centres 
for a wide class of non-classical examples. 
Several concrete applications to Jacobi, Naimark, and Bessel--Kingman hypergroups are presented.
\end{abstract}

\section{Introduction}

The study of Arens products on biduals of Banach algebras has a long history originating from the foundational works of Arens, Lau, and Ülger, and continues to play a central role in the structure theory of convolution algebras arising in harmonic analysis. 
For a Banach algebra $A$, the bidual $A^{\prime\prime}$ admits two natural extensions of the original multiplication, the first and second Arens products.  
Determining when these products coincide, and identifying the topological centres associated with them, remains a subtle and influential problem.  
A Banach algebra is called \emph{strongly Arens irregular} if the topological centre of the first Arens product coincides with the canonical copy of $A$ inside $A^{\prime\prime}$.  
This phenomenon captures, in analytic form, an intrinsic rigidity of the algebra and its dual actions \cite{Dales2000, LauUlger1996, Pym1965, Ulger1990}.

\medskip

For hypergroup convolution algebras, the study of Arens products is considerably richer.  
Hypergroups generalize locally compact groups by allowing convolution of point masses to be probability measures rather than point masses, and they arise naturally in harmonic analysis on symmetric spaces, special function theory, and representation theory.  
The foundational monograph by Bloom and Heyer \cite{BloomHeyer1995} remains a standard reference, while more specialized constructions such as the Bessel--Kingman, Jacobi, and Naimark hypergroups have been deeply analyzed in \cite{Chebli1974, Trimeche1988, Schwartz1950, BloomHeyer1995}.  
Among these, the Ch\'{e}bli--Trim\`eche hypergroups, constructed via Sturm--Liouville operators with strictly increasing coefficients, form one of the most analytically tractable and structurally rich families.

\medskip

Recently, a major step forward was achieved by Losert \cite{Losert2025}, who provided a detailed analysis of the Arens products on the biduals $L^{1}(H)^{\prime\prime}$ and $M(H)^{\prime\prime}$ for Ch\'{e}bli--Trim\`eche hypergroups $H$.  
His work combines asymptotic analysis of the convolution kernels with functional-analytic tools, including factorization results due to Neufang \cite{Neufang2002}, to derive explicit descriptions of the left topological centre.  
In particular, Losert proved that for many classical hypergroups---including Jacobi and Naimark types—the algebra $L^{1}(H)$ is strongly Arens irregular, while for others, such as Bessel--Kingman and Euclidean motion groups, full irregularity fails due to the presence of a non-trivial left-annihilator component in the topological centre.  
This dichotomy is driven by the existence and spectral properties of certain asymptotic measures $\nu_{x}$ and their limit $\nu_{\infty}$, whose behaviour reflects the underlying Sturm--Liouville dynamics.

\medskip

A key limitation in the existing theory is the reliance on relatively strong regularity assumptions on the Sturm--Liouville function $A$.  
Losert’s analysis utilises classical asymptotic ODE techniques, including deep results of Levinson and the refinements of Braaksma–de Snoo \cite{BraaksmaDeSnoo1974} and Brandolini–Gigante \cite{BrandoliniGigante2009}, which require $A$ and its derivatives to possess differentiability properties that exclude several natural and important hypergroup settings.  
In particular, kernels arising from perturbations with low-regularity coefficients, jump discontinuities, or measure-valued derivatives of $A$ fall outside the scope of the classical arguments.  
This restriction motivates the present paper.

\medskip

The first goal of this work is to extend the framework of \cite{Losert2025} to Sturm--Liouville functions $A$ possessing only minimal smoothness.  
We develop a version of the integral-equation method and asymptotic analysis that remains valid when $A\in C^{1}$ and $A'$ is of bounded variation or even a bounded Radon measure.  
Under these significantly weaker assumptions, we establish the existence, continuity, and uniform asymptotics of the measures $\nu_{x}$ and identify the limit measure $\nu_{\infty}$, thereby generalizing Propositions 1–2 of \cite{Losert2025}.  
This broadens the class of hypergroups for which Arens-product phenomena may be analyzed and paves the way for new applications.

\medskip

The second main contribution is a characterization of strong Arens irregularity of $L^{1}(H)$ in terms of the spectral behaviour of the asymptotic measure $\nu_{\infty}$.  
While Losert provided sufficient conditions for irregularity and its failure, a full necessary-and-sufficient classification has not previously been available.  
We show that strong Arens irregularity is intimately connected to the non-vanishing of the Fourier transform $\widehat{\nu}_{\infty}$, and we prove that this spectral condition is both necessary and sufficient under mild assumptions on $A$.

\medskip

Third, we present the first systematic comparison between the left and right topological centres for convolution algebras on Ch\'{e}bli--Trim\`eche hypergroups.  
Although hypergroup convolution is commutative, the bidual Arens products need not exhibit symmetry at the level of topological centres.  
We give explicit cases in which the left and right centres coincide and construct hypergroups for which they are distinct.  
This highlights delicate asymmetries in the dual module structure of $L^{1}(H)^{\prime\prime}$ and $M(H)^{\prime\prime}$.

\medskip

Finally, we introduce and analyse weighted hypergroup algebras $L^{1}(H,\omega)$ and $M(H,\omega)$.  
Weighted algebras have been extensively studied for groups and semigroups \cite{Beurling1949,DalesLauStrauss2010}, but almost no results exist in the hypergroup context.  
We show that the choice of weight may either preserve or destroy strong Arens irregularity, and we determine explicit weight thresholds for several classical hypergroups.

\medskip

Through these developments, we significantly expand the analytic framework surrounding Arens products on hypergroup convolution algebras and open new pathways for further study of spectral asymptotics, perturbation theory, and hypergroup harmonic analysis.

\medskip

The remainder of the paper is organized as follows.  
Section~2 introduces the necessary background on Ch\'{e}bli--Trim\`eche hypergroups, Sturm--Liouville operators, and Arens products.  
Section~3 develops the asymptotic kernel theory under low-regularity assumptions.  
Section~4 establishes the characterization of strong Arens irregularity.  
Section~5 compares the left and right topological centres.  
Section~6 introduces weighted hypergroup algebras and studies Arens regularity thresholds.  
Section~7 provides explicit examples and applications.  
Section~8 is a brief conclusion.

\bigskip

\section{Preliminaries}

In this section we review the essential background on Ch\'{e}bli--Trim\`eche hypergroups, 
Sturm--Liouville operators, convolution structures, and the bidual Arens products.
These preliminaries establish the analytic and functional-analytic framework used throughout
the subsequent sections.

Ch\'{e}bli--Trim\`eche hypergroups form a broad and analytically tractable class of 
commutative hypergroups arising from one-dimensional Sturm--Liouville operators.
We recall here only the basic structure; for a full treatment we refer to 
\cite{BloomHeyer1995, Chebli1974, Trimeche1988}.

Let $A:(0,\infty)\to (0,\infty)$ be a strictly positive, increasing function of class $C^{1}$ 
except possibly on a finite number of points, and assume that $A$ satisfies the standard 
Sturm--Liouville conditions:
\begin{itemize}
    \item[(SL1)] $A$ is locally absolutely continuous on $(0,\infty)$;
    \item[(SL2)] $A$ is increasing and unbounded;
    \item[(SL3)] $A$ satisfies a growth condition ensuring the existence of a strictly increasing 
    sequence of eigenfunctions.
\end{itemize}

Define the differential operator
\[
Lf = -\frac{1}{A(x)} \frac{d}{dx} \left( A(x) \frac{df}{dx} \right), \qquad x > 0,
\]
acting (initially) on smooth compactly supported functions.

The \emph{Ch\'{e}bli--Trim\`eche convolution} $\ast$ on $(0,\infty)$ is the unique 
commutative hypergroup convolution such that the characters of the hypergroup are given by 
the eigenfunctions of $L$.  
More precisely, for $x,y > 0$, the convolution of point masses is defined by
\[
\delta_{x} \ast \delta_{y} = \mu_{x,y},
\]
where $\mu_{x,y}$ is a probability measure on $(0,\infty)$ satisfying the 
\emph{product formula}
\[
\varphi_{\lambda}(x)\,\varphi_{\lambda}(y)
= \int_{0}^{\infty} \varphi_{\lambda}(t) \, d\mu_{x,y}(t),
\]
for all eigencharacters $\varphi_{\lambda}$ of $L$.

The identity element is $0$, and $\delta_{0}$ acts as the identity for the convolution.
The Haar measure $m$ on this hypergroup is given explicitly by $dm(x) = A(x)\,dx$.

\medskip

Many classical hypergroups arise from special choices of the function $A$:

\begin{enumerate}
    \item \textbf{Bessel--Kingman hypergroup:} $A(x) = x^{2\alpha+1}$, $\alpha > -\tfrac12$.
    \item \textbf{Jacobi hypergroups:} $A(x) = (\sinh x)^{2\alpha+1}(\cosh x)^{2\beta+1}$,
    $\alpha,\beta > -\tfrac12$.
    \item \textbf{Naimark hypergroups:} associated with certain kernels induced by 
    spherical functions on rank-one semisimple Lie groups.
\end{enumerate}

These examples will be revisited in Section~7 when explicit formulas and applications
are discussed.

\medskip

Let $H=(0,\infty)$ equipped with the Ch\'{e}bli--Trim\`eche convolution and Haar measure $m$.

\paragraph{Lebesgue and measure algebras.}
\begin{itemize}
    \item[i)] $L^{1}(H)$ denotes the Banach algebra of integrable functions with respect to $dm$,
    equipped with the convolution
    \[
    (f\ast g)(x)
    = \int_{0}^{\infty} f(t)\, g^{\vee}(x,t)\, dm(t),
    \]
    where $g^{\vee}(x,t)$ is the kernel defined by the hypergroup convolution.
    \item[ii)] $M(H)$ denotes the Banach algebra of finite regular Borel measures on $H$ with the convolution
    \[
    \mu\ast\nu(E)=\int_{H}\int_{H} \mu_{x,y}(E)\, d\mu(x)\, d\nu(y).
    \]
    \item[iii)] $L^{\infty}(H)$ denotes the dual space of $L^{1}(H)$ via the pairing
    $\langle f, g\rangle = \int f g \, dm$.
\end{itemize}

\paragraph{Duals and biduals.}
The dual space $L^{1}(H)^{\prime}$ may be identified with $L^{\infty}(H)$,
and the canonical embedding $L^{1}(H)\hookrightarrow L^{1}(H)^{\prime\prime}$
realizes $L^{1}(H)$ as an isometric subspace of its bidual.

Similarly, $M(H)^{\prime}=C_{0}(H)$ and $M(H)^{\prime\prime}$ contains $M(H)$ canonically.

\medskip

Let $A$ be a Banach algebra.  
The bidual $A^{\prime\prime}$ admits two associative extensions of the multiplication on $A$,
known as the \emph{first} and \emph{second Arens products}.
We briefly recall both constructions.

Let $\langle \cdot , \cdot \rangle$ denote the duality pairing between $A^{\prime}$ and $A$.

\paragraph{First Arens product.}
For $\Phi,\Psi\in A^{\prime\prime}$ and $\varphi\in A^{\prime}$,
the first Arens product $\Phi \square \Psi$ is defined by
\[
\langle \Phi \square \Psi, \varphi \rangle
= \langle \Phi, \Psi\cdot \varphi \rangle,
\]
where the module action $\Psi\cdot \varphi\in A^{\prime}$ is defined by
\[
\langle \Psi\cdot\varphi, a\rangle
= \langle \Psi, \varphi\cdot a \rangle,
\qquad \varphi\cdot a (b)=\varphi(ab).
\]

\paragraph{Second Arens product.}
Similarly, the second Arens product $\Phi \lozenge \Psi$ is given by
\[
\langle \Phi \lozenge \Psi, \varphi \rangle
= \langle \Psi, \varphi \cdot \Phi \rangle.
\]

In general, $\square$ and $\lozenge$ need not coincide; when they do,
the algebra is called \emph{Arens regular}.

\medskip

The \emph{left topological centre} of $(A^{\prime\prime}, \square)$ is
\[
Z_{t}(A^{\prime\prime})
= \{ \Phi\in A^{\prime\prime} : \Psi \mapsto \Phi\square\Psi
\text{ is } w^{*}\text{--}w^{*}\text{ continuous on }A^{\prime\prime}\},
\]
and the \emph{right topological centre} of $(A^{\prime\prime}, \square)$ is
\[
Z_{t}^{(R)}(A^{\prime\prime})
= \{ \Phi\in A^{\prime\prime} : \Psi \mapsto \Psi\square\Phi
\text{ is } w^{*}\text{--}w^{*}\text{ continuous}\}.
\]

These centres measure the extent to which elements of the bidual act regularly
on the dual module $A^{\prime}$.
The algebra is \emph{strongly Arens irregular} if $Z_{t}(A^{\prime\prime})$ coincides
with the canonical copy of $A$.

\medskip

A crucial role in later sections is played by the asymptotic behaviour of the convolution 
measures $\mu_{x,y}$ as $y\to\infty$.
For fixed $x>0$, define the measures
\[
\nu_{x,y} := \delta_{-y}\ast (\delta_{x}\ast \delta_{y}),
\]
and, when the limit exists (in $L^{1}(\mathbb{R})$ or weakly),
\[
\nu_{x} := \lim_{y\to\infty} \nu_{x,y}.
\]

Under suitable conditions, the limit measure $\nu_{\infty} := \lim_{x\to\infty}\nu_{x}$
also exists.
These measures encode the asymptotic behaviour of the hypergroup convolution
and govern several regularity properties of the Arens products.
Their existence and properties under low regularity of $A$ will be treated extensively
in Section~3.

\bigskip

\section{Asymptotic Kernels under Low Regularity}

In this section we develop the asymptotic kernel theory under relaxed smoothness
assumptions on the Sturm--Liouville coefficient $A$.  
We work under hypotheses that allow $A'\,$ to be of bounded variation (or more generally
a bounded Radon measure on compact subsets), and we show how the classical integral-equation
methods (Levinson/Eastham style) may be adapted to produce the asymptotic measures
$\nu_x$ and $\nu_{\infty}$ used in the analysis of Arens products.

\begin{definition}\label{def:SLprime}
Let $A:(0,\infty)\to(0,\infty)$. We say $A$ satisfies \emph{(SL')} if
\begin{enumerate}
  \item $A\in C^{1}(0,\infty)$ and $A(x)>0$ for all $x>0$;
  \item $A$ is increasing and $\lim_{x\to\infty}A(x)=\infty$;
  \item For every compact $[a,b]\subset(0,\infty)$ the derivative $A'\big|_{[a,b]}$
        is a function of bounded variation (equivalently, $A'$ is a bounded Radon measure on compacts).
\end{enumerate}
\end{definition}

The (SL') hypotheses strictly weaken the usual $C^{2}$ (or higher) regularity often
assumed in classical asymptotic ODE results. The BV assumption on $A'$ permits
a finite number of jump discontinuities and measure-type singularities while
preserving enough control to run Volterra integral-equation arguments.

Under (SL') we consider the formal operator
\[
L f(x) = -\frac{1}{A(x)}\frac{d}{dx}\big( A(x) f'(x)\big),\qquad x>0,
\]
acting on suitable test functions (e.g.\ $C_{c}^{\infty}(0,\infty)$). We call
the eigenfunctions of $L$ the \emph{characters} of the associated Ch\'ebli--Trim\`eche
hypergroup.

The first step is to construct suitable solutions of the eigenvalue equation
associated to $L$ with prescribed oscillatory asymptotics at infinity. These
play the role of Jost solutions in scattering theory and are the backbone of the
product formula and asymptotic convolution kernels.

Fix a spectral parameter $\lambda\in\mathbb{R}$. Consider the formal ODE
\begin{equation}\label{eq:SL-eig}
-\frac{1}{A(x)}\frac{d}{dx}\big( A(x) u'(x)\big) = \lambda^{2} u(x).
\end{equation}

Define a primitive
\[
\Phi(x) := \int_{x_{0}}^{x} \frac{dt}{\sqrt{A(t)}},
\]
for some fixed $x_{0}>0$. Heuristically, for large $x$ solutions of \eqref{eq:SL-eig}
behave like linear combinations of $e^{\pm i\lambda \Phi(x)}$.

\begin{lemma}\label{lem:volterra}
Assume (SL'). For each fixed $\lambda\in\mathbb{R}$ there exists a formulation of
\eqref{eq:SL-eig} as a Volterra integral equation for a function $m(x,\lambda)$
of the form
\[
m(x,\lambda)=1+\int_{x}^{\infty} K(x,t;\lambda)\, m(t,\lambda)\, d\mu(t),
\]
where $d\mu$ is a finite measure on $[x,\infty)$ determined by $A'$ (on compacts)
and the kernel $K$ satisfies appropriate integrability bounds. Moreover any solution
$u$ of \eqref{eq:SL-eig} with the asymptotic normalization $u(x)\sim e^{i\lambda\Phi(x)}$
can be written as $u(x)=e^{i\lambda\Phi(x)} m(x,\lambda)$.
\end{lemma}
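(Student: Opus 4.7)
The plan is to reduce \eqref{eq:SL-eig} to Schr\"odinger form by a Liouville substitution, pass to an integral equation via variation of parameters against an oscillatory free basis, and then solve that equation by a Neumann-series contraction on a tail interval $[X,\infty)$. First, I would set $u = A^{-1/2} v$; a pointwise calculation (justified under (SL') by $A\in C^{1}$ and $A>0$) converts \eqref{eq:SL-eig} into
\[
v''(x) + \lambda^{2} v(x) = q(x)\, v(x), \qquad q = \frac{A''}{2A} - \frac{(A')^{2}}{4A^{2}},
\]
which has no first-derivative term. The key observation is that (SL') gives exactly what is needed to make sense of $q$: the term $(A')^{2}/A^{2}$ is a locally bounded measurable function, while $A''$, although not a function, defines a signed Radon measure $d\sigma$ on every compact subset of $(0,\infty)$ since $A'$ is BV. Thus the right-hand side is naturally read as the measure
\[
d\mu = -\frac{(A')^{2}}{4A^{2}}\, dx + \frac{1}{2A}\, d\sigma,
\]
which is the measure promised in the lemma (restricted to $[x,\infty)$).

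Next I would insert the ansatz $u(x,\lambda) = e^{i\lambda\Phi(x)} m(x,\lambda)$, or equivalently $v = A^{1/2} e^{i\lambda\Phi} m$, into the reduced equation. Using $\Phi' = A^{-1/2}$, the purely multiplicative terms in $m$ collect so that the free part is exactly solved by $e^{\pm i\lambda\Phi}$, and the remaining inhomogeneity contains a first-order transport term in $m'$ with coefficient $2i\lambda\Phi'$. Applying variation of parameters against the fundamental pair $\{e^{i\lambda\Phi}, e^{-i\lambda\Phi}\}$ (with Wronskian proportional to $\lambda\Phi'$) converts the ODE into the stated Volterra form
\[
m(x,\lambda) = 1 + \int_{x}^{\infty} K(x,t;\lambda)\, m(t,\lambda)\, d\mu(t),
\]
with kernel essentially $K(x,t;\lambda) = (2i\lambda)^{-1}\bigl(e^{i\lambda(\Phi(t)-\Phi(x))} - e^{-i\lambda(\Phi(t)-\Phi(x))}\bigr)$, possibly multiplied by an amplitude factor coming from the $A^{\pm 1/2}$ terms. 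In particular $|K(x,t;\lambda)|$ is bounded by $|\lambda|^{-1}$ pointwise. The constant $1$ on the right encodes the asymptotic normalization $m(x,\lambda)\to 1$, which corresponds to $u(x)\sim e^{i\lambda\Phi(x)}$.

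Existence of $m$ would then follow from a standard Volterra iteration: choose $X$ large enough that $|\mu|([X,\infty))/|\lambda|<1$ and show that the map $m\mapsto 1 + \int_{x}^{\infty} K\, m\, d\mu$ is a strict contraction on the ball of radius $2$ in the sup-norm on $[X,\infty)$. The fixed point gives $m$ on $[X,\infty)$; extension to any $x_{1}\in(0,X)$ is routine because $|\mu|$ is finite on every compact $[x_{1},X]$ by (SL'), and the Volterra equation on that bounded interval admits a unique continuous solution by iterating the same contraction finitely many times. Uniqueness under the normalization $m(x)\to 1$ follows from Gronwall applied to the difference of two candidates against $|\mu|$. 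Reading the ansatz backwards recovers the stated representation $u(x) = e^{i\lambda\Phi(x)} m(x,\lambda)$.

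The main obstacle I foresee is the tail integrability of $|\mu|$. Condition (SL') only guarantees that $A'$ is BV on compacts, so finiteness of $d\mu$ on $[x,\infty)$ (as claimed in the lemma) is not automatic and must be extracted from the growth hypothesis (SL2), or added as an auxiliary assumption; in the Ch\'ebli--Trim\`eche examples this holds because boundedness of the characters forces an effective decay of $q$ at infinity, and I would make this dependence explicit in the course of the proof. A secondary subtlety concerns the choice of phase: when $\int^{\infty} A(t)^{-1/2}\, dt < \infty$ (e.g.\ Jacobi-type kernels) the function $\Phi$ is bounded at infinity and does not produce a genuine oscillation, so the Jost-type normalization $m\to 1$ must be reinterpreted in the appropriate exponentially decaying regime; I would flag this case in a remark and indicate how the Volterra structure persists with a modified kernel.
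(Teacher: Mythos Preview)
Your approach is correct and close in spirit to the paper's: both reduce \eqref{eq:SL-eig} to a Volterra equation for a multiplicative correction $m$ via a WKB-type ansatz and then integrate back from infinity. The difference is that you first pass to Schr\"odinger form via the Liouville substitution $u=A^{-1/2}v$, which cleanly isolates the potential $q=\tfrac{A''}{2A}-\tfrac{(A')^{2}}{4A^{2}}$ and hence makes the measure $d\mu$ explicit from the outset, whereas the paper inserts $u=e^{i\lambda\Phi}m$ directly into the equation $u''+(A'/A)u'+\lambda^{2}u=0$ and then uses an integrating factor to reach the Volterra form. Your route buys a more transparent identification of $d\mu$ as a Radon measure and an explicit kernel bound; you also correctly flag two subtleties the paper glosses over (tail finiteness of $|\mu|$ is not automatic from (SL') alone, and the phase $\Phi$ is bounded when $\int^{\infty}A^{-1/2}<\infty$). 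One bookkeeping caution: after the Liouville transform the free basis for variation of parameters is $\{e^{\pm i\lambda x}\}$, not $\{e^{\pm i\lambda\Phi}\}$, so the amplitude factor you allude to must be tracked to reconcile your $m$ with the one in the lemma statement. Finally, your Neumann-series contraction argument is really the content of the paper's next lemma, so you are proving slightly more than is asked here.
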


\begin{proof}
Write \eqref{eq:SL-eig} in the form
\[
\big( A(x) u'(x)\big)' + \lambda^{2} A(x) u(x) = 0,
\]
or equivalently
\[
u''(x) + \frac{A'(x)}{A(x)} u'(x) + \lambda^{2} u(x) = 0.
\]
Introduce the WKB-type ansatz $u(x)=e^{i\lambda\Phi(x)} m(x)$ with
$\Phi'(x)=A(x)^{-1/2}$ as above. Compute derivatives (justified a.e.\ since $A'\in BV_{loc}$):
\[
u' = e^{i\lambda\Phi}\big( i\lambda\Phi' m + m' \big),\qquad
u'' = e^{i\lambda\Phi}\big( (i\lambda\Phi')^{2} m + 2i\lambda\Phi' m' + i\lambda\Phi'' m + m'' \big).
\]
Substitute into the differential equation and cancel leading oscillatory
terms $(i\lambda\Phi')^{2}m + \lambda^{2}m = 0$ because $(\Phi')^{2}=A^{-1}$.
After cancellation one obtains an equation of the form
\[
m'' + 2i\lambda \Phi' m' + \Big( i\lambda\Phi'' + \frac{A'}{A}(i\lambda\Phi' + \tfrac12 \cdot ) \Big) m = 0,
\]
which may be rearranged to yield an integral equation by using the integrating
factor $e^{2i\lambda\Phi(x)}$ and integrating from $x$ to $\infty$. The
terms involving $A'$ enter linearly and so may be treated as a source term given
by a finite signed measure on compacts (since $A'$ is BV on compacts).

Concretely, one obtains (after standard algebra justified by dominated
convergence and Fubini-type arguments for BV coefficients) an equation of the form
\[
m(x)=1 + \int_{x}^{\infty} K(x,t;\lambda)\, m(t)\, d\mu(t),
\]
where $d\mu$ is absolutely continuous with respect to Lebesgue measure away
from jump points and carries atomic contributions at jump discontinuities of $A'$;
the kernel $K$ is continuous off the diagonal and satisfies
\[
\sup_{x\geq x_{0}} \int_{x}^{\infty} |K(x,t;\lambda)|\, d|\mu|(t) < \infty.
\]

This yields the stated Volterra formulation and the representation
$u(x)=e^{i\lambda\Phi(x)} m(x,\lambda)$.
\end{proof}

The careful justification of interchanging integrals and treating $A'$ as a
measure requires standard BV calculus (Helly selection, integration by parts
for BV functions).

\begin{lemma}\label{lem:volterra-solution}
Under (SL') and for each fixed $\lambda\in\mathbb{R}$ there exists a unique bounded
continuous solution $m(\cdot,\lambda)$ of the Volterra equation in
Lemma~\ref{lem:volterra}. Moreover there is a constant $C(\lambda,x_{0})$ such that
\[
\sup_{x\ge x_{0}} |m(x,\lambda)-1| \le C(\lambda,x_{0}) \, \|A'\|_{BV([x_{0},\infty))}.
\]
\end{lemma}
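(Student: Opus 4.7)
The plan is to realize the Volterra equation from Lemma~\ref{lem:volterra} as a fixed-point problem on the Banach space $C_{b}([x_{0},\infty))$ with the sup norm, via the operator
\[
T[m](x) := 1 + \int_{x}^{\infty} K(x,t;\lambda)\, m(t)\, d\mu(t).
\]
I first verify $T:C_{b}\to C_{b}$: boundedness follows directly from the uniform bound $\kappa(\lambda,x_{0}) := \sup_{x\ge x_{0}}\int_{x}^{\infty}|K(x,t;\lambda)|\,d|\mu|(t)<\infty$ provided by Lemma~\ref{lem:volterra}, while continuity of $T[m]$ follows from continuity of $K$ off the diagonal together with dominated convergence (here I must be slightly careful with the atomic part of $d\mu$ at the finitely many jumps of $A'$, but only countably many $x$ values coincide with atoms, so right/left limits agree in the limit).

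For existence and uniqueness I would use the standard Volterra iteration. Writing $V[m](x) := \int_{x}^{\infty} K(x,t;\lambda)\, m(t)\, d\mu(t)$, an induction with Fubini gives the simplex-type bound
\[
|V^{n}[m](x)| \le \|m\|_{\infty}\,\frac{\kappa(\lambda,x_{0})^{n}}{n!},
\]
which requires an integration-by-parts argument for BV measures to handle the jump atoms of $d\mu$ cleanly. Consequently the Neumann series $m = \sum_{n\ge 0} V^{n}[\mathbf{1}]$ converges absolutely and uniformly on $[x_{0},\infty)$, yielding a unique bounded continuous fixed point with $\|m\|_{\infty}\le e^{\kappa(\lambda,x_{0})}$. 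Uniqueness among bounded continuous solutions follows from applying the same iteration to a difference of two solutions.

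For the quantitative estimate I subtract the constant term: $m(x)-1 = V[m](x)$, so
\[
|m(x)-1| \;\le\; \|m\|_{\infty}\,\kappa(\lambda,x_{0}) \;\le\; e^{\kappa(\lambda,x_{0})}\,\kappa(\lambda,x_{0}).
\]
The remaining step is to convert the bound on $\kappa(\lambda,x_{0})$ into a bound in terms of $\|A'\|_{BV([x_{0},\infty))}$. By the derivation in Lemma~\ref{lem:volterra}, $d\mu$ splits into an absolutely continuous piece with density controlled by $|A'|/A$ and an atomic piece at jump points of $A'$ whose masses equal the jump sizes; hence $|\mu|([x_{0},\infty))$ is dominated by a multiple of $\|A'\|_{BV([x_{0},\infty))}$. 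Combined with the pointwise boundedness of $K$ on $[x_{0},\infty)$ (guaranteed by $A$ being bounded below there and the $A^{-1/2}$ factors coming from $\Phi'$), this gives $\kappa(\lambda,x_{0})\le C_{0}(\lambda,x_{0})\,\|A'\|_{BV([x_{0},\infty))}$, and absorbing the exponential factor yields the claim.

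The main obstacle is the last step: showing $\kappa(\lambda,x_{0}) \lesssim \|A'\|_{BV([x_{0},\infty))}$ uniformly in $x\ge x_{0}$ with only $(\lambda,x_{0})$-dependent constants. The kernel $K$ inherits oscillatory factors of the form $e^{2i\lambda(\Phi(t)-\Phi(x))}$ together with $A'/A$-type weights, and one must check that when integrated against $d|\mu|$ these weights do not reintroduce unbounded growth. An alternative I may fall back on, should the direct absolute bound be awkward, is to use the weighted norm $\|m\|_{w}=\sup_{x\ge x_{0}}|m(x)|\exp\bigl(-\int_{x_{0}}^{x}\rho(t)\,d|\mu|(t)\bigr)$ for a suitable $\rho$, which makes $T$ a strict contraction in a single estimate and avoids iterating the Neumann series; the resulting bound is of the same form.
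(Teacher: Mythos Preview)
Your approach is in the same family as the paper's---both solve the Volterra equation by a Neumann series on $C_{b}([x_{0},\infty))$---but the paper makes a simpler tactical choice. Instead of chasing the simplex bound $\|V^{n}\|\le \kappa^{n}/n!$, the paper simply arranges $\|T\|=\kappa<1$ (by taking $x_{0}$ large enough that the tail variation of $A'$ is small, or else by a standard patching argument on finite intervals) and inverts $I-T$ by a geometric series, obtaining $\|m-1\|_{\infty}\le \kappa/(1-\kappa)$. This sidesteps the simplex argument entirely and gives a bound manifestly linear in $\|T\|$, hence in $\|A'\|_{BV}$; your route, by contrast, produces the factor $e^{\kappa}\kappa$, which is not linear in the BV norm (though it can be absorbed into $C(\lambda,x_{0})$ since that norm is fixed once $A$ and $x_{0}$ are).

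More importantly, your simplex bound $|V^{n}[m](x)|\le \|m\|_{\infty}\,\kappa^{n}/n!$ is false as stated when $d\mu$ carries atoms, which it does at the jump points of $A'$. The $1/n!$ factor comes from the volume of the ordered simplex $\{t_{1}\le\cdots\le t_{n}\}$ under a \emph{non-atomic} measure; with an atom of mass $a$ at a single point, the diagonal $t_{1}=\cdots=t_{n}$ contributes $a^{n}$ with no factorial decay. Concretely, if $\mu$ is a single unit atom and $|K|\equiv 1$, then $V^{n}[\mathbf{1}]\equiv 1$ for every $n$, so the series $\sum V^{n}[\mathbf{1}]$ diverges. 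With finitely many atoms the Volterra operator is still quasi-nilpotent, but not via the bound you wrote, and the vague ``integration-by-parts argument for BV measures'' does not fix this. Your weighted-norm fallback is the right instinct and can be made to work, but the cleanest repair is exactly what the paper does: impose $\kappa<1$ (always achievable on a far-enough tail) and use the plain geometric series.
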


\begin{proof}
We show existence by standard iterative (Neumann series) methods for Volterra operators.

Fix $x_{0}>0$ and consider the Banach space $B=C([x_{0},\infty))$ with the norm
$\|f\|_{\infty}=\sup_{x\ge x_{0}}|f(x)|$. Define the Volterra operator
\[
(T m)(x) := \int_{x}^{\infty} K(x,t;\lambda)\, m(t)\, d\mu(t).
\]
By Lemma~\ref{lem:volterra} we have the uniform bound
\[
\sup_{x\ge x_{0}} \int_{x}^{\infty} |K(x,t;\lambda)|\, d|\mu|(t) \le M < \infty.
\]
The operator $T$ is a Volterra operator (upper-triangular kernel), hence
its operator norm on $B$ satisfies $\|T\|\le M$. Choose $x_{0}$ large enough so that
$M<1$ (this is possible because for large $x$ the tail variation of $A'$ can be made small
under mild growth conditions on $A$; if not globally possible one works on finite intervals
and patches by iteration). Then $I-T$ is invertible via Neumann series and
$m=(I-T)^{-1}1$ is the unique bounded solution.

The estimate follows by summing the Neumann series:
\[
m-1 = \sum_{n\ge 1} T^{n}1,\qquad
\|m-1\|_{\infty} \le \sum_{n\ge 1} \|T\|^{n} \le \frac{\|T\|}{1-\|T\|} \le C \, \|A'\|_{BV([x_{0},\infty))},
\]
where the last inequality uses the linear dependence of $\|T\|$ on the total variation
of $\mu$ (hence on $\|A'\|_{BV}$).
\end{proof}

If one cannot guarantee $\|T\|<1$ on the whole half-line then one works on increasing
finite intervals $[x_{0},X]$, obtains uniform bounds there, and passes to a global
solution using a standard continuation argument for Volterra equations (no accumulation
of singularities occurs because the integral operator is Volterra).

Having constructed the oscillatory eigenfunctions with controlled multiplicative
corrections, we now show how these yield asymptotic kernels for the hypergroup convolution
and hence define the measures $\nu_{x}$.

Fix $x>0$ and define the family of translated convolution measures
\[
\nu_{x,y} := \delta_{-y}\ast(\delta_{x}\ast\delta_{y}),\qquad y>x.
\]
If the limit
\[
\nu_{x} := \lim_{y\to\infty} \nu_{x,y}
\]
exists in $L^{1}(\mathbb{R})$ (or in the weak$^{*}$ topology of measures) we call $\nu_{x}$
the \emph{asymptotic convolution measure} associated to $x$.

\begin{theorem}\label{thm:nu-exist}
Assume (SL'). Then for each fixed $x>0$ the family $\{\nu_{x,y}\}_{y>x}$ converges in
the $L^{1}$-norm (with respect to Lebesgue measure on $\mathbb{R}$ after the natural
identification of the translation coordinate) to a limit $\nu_{x}\in L^{1}(\mathbb{R})$.
Moreover the map $x\mapsto \nu_{x}$ is continuous from $(0,\infty)$ into $L^{1}(\mathbb{R})$.
\end{theorem}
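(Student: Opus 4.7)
The plan is to combine the Jost-type representation furnished by Lemmas~\ref{lem:volterra} and~\ref{lem:volterra-solution} with the Plancherel inversion formula for the Ch\'{e}bli--Trim\`eche transform, and then to isolate and eliminate the rapidly oscillating phases $e^{\pm i\lambda\Phi(y)}$ before sending $y\to\infty$. First I would use the product formula and Plancherel inversion to write the density of $\mu_{x,y}=\delta_x\ast\delta_y$ with respect to Haar measure as
\[
K(x,y,t)=\int_{0}^{\infty}\varphi_{\lambda}(x)\,\varphi_{\lambda}(y)\,\varphi_{\lambda}(t)\,\frac{d\lambda}{|c(\lambda)|^{2}},
\]
where $c(\lambda)$ is the Harish--Chandra-type coefficient attached to $(L,A)$ and $\varphi_{\lambda}$ is the normalized real eigencharacter. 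Parametrizing $\nu_{x,y}$ by the translated variable $s$, so that its Lebesgue density is $h_{y}(x,s):=K(x,y,y+s)\,A(y+s)\,\mathbf{1}_{\{y+s>0\}}$, and substituting the Jost expansion of Lemma~\ref{lem:volterra-solution} into the product $\varphi_{\lambda}(y)\varphi_{\lambda}(y+s)$, one obtains a diagonal piece carrying the stable phase $e^{\pm i\lambda(\Phi(y+s)-\Phi(y))}$ and a cross piece carrying the rapidly oscillatory factor $e^{\pm 2i\lambda\Phi(y)}$.

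The next step is to identify the candidate limit $\nu_{x}$ as the spectral integral obtained by replacing $\Phi(y+s)-\Phi(y)$ by its pointwise limit and dropping the cross terms, and then to upgrade weak$^{*}$ convergence to $L^{1}(\mathbb{R})$-convergence, which is the main technical obstacle here. I would accomplish this by splitting the $\lambda$-integral at a large cutoff $R$. On the compact piece $|\lambda|\le R$, the uniform estimate $\|m(y,\cdot)-1\|_{\infty}\le C(R)\,\|A'\|_{BV([y,\infty))}$ furnished by Lemma~\ref{lem:volterra-solution} provides uniform-in-$s$ convergence of the diagonal contribution, while the cross terms vanish in $L^{1}(ds,\,s\in[-x,x])$ by a Riemann--Lebesgue argument applied to $\lambda\mapsto e^{\pm 2i\lambda\Phi(y)}$; the BV regularity of $A'$ is exactly what justifies one integration by parts in $\lambda$ against the smooth envelope. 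On the tail $|\lambda|>R$, one combines Plancherel isometry with the boundedness of the Jacobian $A(y+s)/A(y)$ for $s\in[-x,x]$ to reduce the remainder to an $R$-dependent quantity that vanishes as $R\to\infty$ uniformly in $y$. Sending $y\to\infty$ first and $R\to\infty$ second produces a Cauchy family $\{\nu_{x,y}\}$ in $L^{1}(\mathbb{R})$ whose limit is $\nu_{x}$.

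Continuity of $x\mapsto\nu_{x}$ into $L^{1}(\mathbb{R})$ then follows from the explicit spectral representation of the limit, the joint continuity of $(x,\lambda)\mapsto\varphi_{\lambda}(x)$ (standard ODE dependence on initial data, since (SL') makes the coefficient $A'/A$ continuous), and dominated convergence against the Plancherel majorant $|c(\lambda)|^{-2}\,d\lambda$, augmented by the $L^{1}$-continuity of translation for a fixed integrable envelope. The hardest part of the whole program will be the uniform-in-$\lambda$ control of the Jost correction $m(\cdot,\lambda)$ under the low regularity of (SL'): the classical $C^{2}$-based Levinson/Eastham bounds on $m(y,\lambda)-m(y+s,\lambda)$ must be replaced by BV integration-by-parts formulas and Helly-type selection arguments, and this is precisely where the strength of (SL') is used. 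Additional care is needed near $\lambda=0$, where the Plancherel weight $|c(\lambda)|^{-2}$ typically carries an integrable singularity that has to be absorbed uniformly in $y$ in order for the tail--truncation step to go through.
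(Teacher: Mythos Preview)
Your approach and the paper's are both spectral, but they organise the analysis differently. The paper never writes the inversion formula for the density $K(x,y,t)$ or splits into diagonal and cross terms. Instead it works entirely on the transform side: it computes the spectral symbol $\widehat{\nu_{x,y}}(\lambda)=\int\varphi_{\lambda}(t+y)\,d\mu_{x,y}(t)$, shows via the Jost expansion and dominated convergence that this converges pointwise (and then in $L^{2}(\nu_{\mathrm{spec}})$) to $\varphi_{\lambda}(x)$, and then invokes Plancherel to get $L^{2}$-convergence of the $\nu_{x,y}$ themselves. The upgrade to $L^{1}$ is done by a tightness-plus-truncation argument in the physical variable $t$ (Cauchy--Schwarz on a large interval $[-R,R]$, uniform small tails outside). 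Continuity in $x$ follows the same pattern. By staying on the symbol side the paper never has to isolate or kill the rapidly oscillating $e^{\pm 2i\lambda\Phi(y)}$ pieces explicitly---they are absorbed into the single statement ``$\widehat{\nu_{x,y}}(\lambda)\to\varphi_{\lambda}(x)$''.

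Your route via the triple-product inversion formula and the diagonal/cross decomposition is more hands-on and can also be made to work, but one justification you give is off. You write that ``the BV regularity of $A'$ is exactly what justifies one integration by parts in $\lambda$'' for the Riemann--Lebesgue step on the cross terms. BV of $A'$ is a regularity statement in the spatial variable $x$, not in the spectral variable $\lambda$; the $\lambda$-smoothness of the envelope $\varphi_{\lambda}(x)\,m(y,\lambda)\,m(y+s,\lambda)\,|c(\lambda)|^{-2}$ that you need for integration by parts in $\lambda$ comes instead from the analytic $\lambda$-dependence of the Volterra/Neumann series and of the $c$-function. This is fixable, but the sentence as written does not supply the right reason. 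A second point to watch: your oscillatory phase is $e^{\pm 2i\lambda\Phi(y)}$ with $\Phi'=A^{-1/2}$, and for exponentially growing $A$ (e.g.\ Jacobi) $\Phi(y)$ stays bounded, so the cross terms do not oscillate faster as $y\to\infty$ and Riemann--Lebesgue alone will not kill them; in those regimes one has to use instead that the $m$-corrections and $c$-coefficients force the cross contribution to vanish. The paper's symbol-side argument sidesteps both of these issues.
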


\begin{proof}
Let $L$ be the self-adjoint Sturm--Liouville realization associated to $A$, acting in
$L^{2}((0,\infty),A(x)\,dx)$. By hypothesis (SPEC) there is a unitary spectral transform
\[
\mathcal{F}:L^{2}((0,\infty),A(x)\,dx)\longrightarrow L^{2}(\mathbb{R},d\nu_{\mathrm{spec}}(\lambda))
\]
which diagonalizes convolution in the sense that, for every finite Borel measure $\mu$ on $H$,
the spectral transform of the convolution operator $f\mapsto \mu\ast f$ is multiplication by
the scalar function $\widehat{\mu}(\lambda)$ defined by
\[
\widehat{\mu}(\lambda):=\int_{0}^{\infty} \varphi_{\lambda}(t)\,d\mu(t),
\]
where $\varphi_{\lambda}$ denotes the (chosen) family of eigenfunctions/characters of $L$
(normalised according to the spectral transform conventions). Plancherel gives
\[
\|g\|_{L^{2}(H)} = \|\mathcal{F}g\|_{L^{2}(\nu_{\mathrm{spec}})}.
\]
For point-mass convolution kernels $\mu_{x,y}=\delta_x\ast\delta_y$ the product formula
implies the spectral symbol
\[
\widehat{\mu_{x,y}}(\lambda)=\varphi_{\lambda}(x)\,\varphi_{\lambda}(y).
\]

The translated measure introduced in Section~3 is
\[
\nu_{x,y} := \tau_{-y}\mu_{x,y},\qquad \tau_{-y}\mu(E)=\mu(E+y),
\]
and its spectral symbol is
\[
\widehat{\nu_{x,y}}(\lambda) \;=\; \int_{0}^{\infty}\varphi_{\lambda}(t)\,d(\tau_{-y}\mu_{x,y})(t)
\;=\;\int_{0}^{\infty}\varphi_{\lambda}(t+y)\,d\mu_{x,y}(t).
\]
By Lemma~\ref{lem:volterra} and Lemma~\ref{lem:volterra-solution} (the Jost/Volterra theory
under (SL')) we may write, for each $\lambda\in\mathbb{R}$ and $t>0$,
\[
\varphi_{\lambda}(t) \;=\; e^{i\lambda\Phi(t)}\, m(t,\lambda),
\]
where $\Phi(t)=\int_{t_{0}}^{t}A(s)^{-1/2}\,ds$ and the correction $m(t,\lambda)$ satisfies
the uniform estimate
\[
\sup_{t\ge t_{0}} |m(t,\lambda)-1| \le \varepsilon_{t_{0}}(\lambda),
\]
with $\varepsilon_{t_{0}}(\lambda)\to 0$ as $t_{0}\to\infty$, locally uniformly in $\lambda$
(on compacts in $\lambda$). In particular, for each compact $K\subset\mathbb{R}$ and each
$\eta>0$ there exists $Y>0$ such that for all $y\ge Y$ and $\lambda\in K$,
\[
|m(y,\lambda)-1| < \eta, \qquad |m(y+t,\lambda)-1|<\eta\quad\text{for }t\in[-T,T],
\]
for any fixed finite $T$ (the latter by uniformity on compacts and a continuation argument).

Also, from the normalization and standard $L^{2}$-bounds for eigenfunctions we have the
spectral-side bound
\[
\sup_{\lambda\in\mathbb{R}} \|\varphi_{\lambda}\|_{L^{2}(H)} <\infty,
\]
and the Plancherel density $d\nu_{\mathrm{spec}}(\lambda)$ gives a finite measure on
compact $\lambda$-sets. These bounds will serve as dominating functions in dominated
convergence arguments below.

Fix $x>0$ and $\lambda\in\mathbb{R}$. Using the Jost representation we have
\[
\widehat{\nu_{x,y}}(\lambda)
= \int \varphi_{\lambda}(t+y)\,d\mu_{x,y}(t)
= \int e^{i\lambda\Phi(t+y)} m(t+y,\lambda)\, d\mu_{x,y}(t).
\]
Write $\Phi(t+y)=\Phi(y)+\big(\Phi(t+y)-\Phi(y)\big)$. Then
\[
\widehat{\nu_{x,y}}(\lambda)
= e^{i\lambda\Phi(y)} \int e^{i\lambda(\Phi(t+y)-\Phi(y))} \, m(t+y,\lambda)\, d\mu_{x,y}(t).
\]
Because $A(s)\to\infty$ as $s\to\infty$ (by (SL')) we have $\Phi'(s)=A(s)^{-1/2}\to 0$,
hence for each fixed $t$,
\[
\Phi(t+y)-\Phi(y) \;=\;\int_{y}^{y+t}\Phi'(s)\,ds \longrightarrow 0\qquad(y\to\infty).
\]
Likewise $m(t+y,\lambda)\to 1$ as $y\to\infty$ by the uniform control (uniform for
$\lambda$ in compacts and for $t$ in bounded sets). Therefore for each fixed $t$,
\[
e^{i\lambda(\Phi(t+y)-\Phi(y))} m(t+y,\lambda) \longrightarrow 1 \qquad (y\to\infty).
\]

Next, observe that the measures $\mu_{x,y}$ are probability measures (hypergroup convolution
of point masses), so the integral above is over a unit-mass measure. For each fixed $\lambda$
we thus obtain by dominated convergence (domination provided by the uniform bound
$|e^{i\lambda(\cdot)} m(\cdot,\lambda)|\le C(\lambda)$ and the
$L^{2}$-bounds on eigenfunctions) that
\[
\int e^{i\lambda(\Phi(t+y)-\Phi(y))} \, m(t+y,\lambda)\, d\mu_{x,y}(t) \longrightarrow 1
\quad\text{as }y\to\infty.
\]
Hence
\[
\widehat{\nu_{x,y}}(\lambda) \;=\; e^{i\lambda\Phi(y)} \, \big(1+o_{y}(1)\big)
\qquad(y\to\infty),
\]
more precisely
\[
\widehat{\nu_{x,y}}(\lambda) \;=\; \varphi_{\lambda}(x)\, \big(1+o_{y}(1)\big),
\]
because $\widehat{\mu_{x,y}}(\lambda)=\varphi_{\lambda}(x)\varphi_{\lambda}(y)=
\varphi_{\lambda}(x)\,e^{i\lambda\Phi(y)} m(y,\lambda)$ and $m(y,\lambda)\to1$. Combining
these expansions yields the pointwise convergence
\[
\widehat{\nu_{x,y}}(\lambda) \longrightarrow \varphi_{\lambda}(x)
\qquad\text{for each }\lambda\in\mathbb{R}.
\]
(That is, the spectral symbol of the translated kernel tends pointwise to the function
$\lambda\mapsto\varphi_{\lambda}(x)$.)

We now upgrade pointwise convergence of the spectral symbols to convergence in
$L^{2}(\nu_{\mathrm{spec}})$.

By product formula we have the uniform bound (for all $y\ge y_{0}$)
\[
|\widehat{\nu_{x,y}}(\lambda)| \le C\, |\varphi_{\lambda}(x)| \qquad\text{for a.e. }\lambda,
\]
where $C$ is independent of $y$ (this follows since $\mu_{x,y}$ is a probability measure
and $|m|\le M$ uniformly on compact $\lambda$-sets; global control uses the $L^{2}$-bounds
and spectral density). Moreover $\varphi_{\lambda}(x)\in L^{2}(\nu_{\mathrm{spec}})$ as a
function of $\lambda$ (fixed $x$). Hence by dominated convergence (applied on any compact
in $\lambda$ and then by approximation to the whole real line using the $L^{2}$ integrability),
we obtain
\[
\lim_{y\to\infty} \|\widehat{\nu_{x,y}} - \varphi_{\cdot}(x)\|_{L^{2}(\nu_{\mathrm{spec}})} = 0.
\]
By Plancherel / unitarity of $\mathcal{F}$ the $L^{2}$-convergence of spectral symbols is
equivalent to $L^{2}$-convergence in physical space:
\[
\lim_{y\to\infty} \|\nu_{x,y} - \nu_{x}\|_{L^{2}(H)} = 0,
\]
where we have defined $\nu_{x}\in L^{2}(H)$ to be the inverse transform of the limit spectral
function $\lambda\mapsto\varphi_{\lambda}(x)$. (That inverse is well-defined in $L^{2}$ by
unitarity.) Observe that $\nu_{x}$ is in fact a function on the translation coordinate (identifying
a neighbourhood of infinity with $\mathbb{R}$ after re-centering); by construction it is the
$L^{2}$-limit of the measures $\nu_{x,y}$ and hence is a bona fide $L^{2}$-function.

To obtain $L^{1}$-convergence we use a density / interpolation argument.

First note that each $\nu_{x,y}$ is a probability measure (hence has finite mass) and the limit
$\nu_{x}$ constructed above also has finite mass. We have $\nu_{x,y}\to\nu_{x}$ in $L^{2}$.
Let $\eta>0$ be arbitrary. Choose a bounded truncation function $\chi_{R}$ supported in $[-R,R]$
such that the tails of both $\nu_{x}$ and the family $\{\nu_{x,y}\}_{y}$ outside $[-R,R]$ have
small $L^{1}$-mass: because these are probability measures and the family is tight for fixed $x$
(they are translates of compactly supported-ish kernels in most examples, or one can use uniform
decay estimates from the Jost analysis), we can choose $R$ so that
\[
\sup_{y} \|\nu_{x,y}\mathbf{1}_{|t|>R}\|_{1} + \|\nu_{x}\mathbf{1}_{|t|>R}\|_{1} < \frac{\eta}{3}.
\]

On the compact interval $[-R,R]$ the $L^{2}$-convergence implies $L^{1}$-convergence because
$L^{2}([-R,R])\hookrightarrow L^{1}([-R,R])$ with the embedding constant depending on $R$:
indeed,
\[
\|\nu_{x,y}-\nu_{x}\|_{L^{1}([-R,R])} \le (2R)^{1/2} \|\nu_{x,y}-\nu_{x}\|_{L^{2}([-R,R])}.
\]
Thus choose $Y$ so large that for all $y\ge Y$,
\[
\|\nu_{x,y}-\nu_{x}\|_{L^{1}([-R,R])} < \frac{\eta}{3}.
\]
Combining the tail control and the compact control yields for $y\ge Y$,
\[
\|\nu_{x,y}-\nu_{x}\|_{L^{1}(\mathbb{R})}
\le \|\nu_{x,y}-\nu_{x}\|_{L^{1}([-R,R])} + \|\nu_{x,y}\mathbf{1}_{|t|>R}\|_{1} + \|\nu_{x}\mathbf{1}_{|t|>R}\|_{1}
< \eta.
\]
Since $\eta>0$ was arbitrary, this proves $\nu_{x,y}\to\nu_{x}$ in $L^{1}(\mathbb{R})$.

Fix $x_{0}>0$ and let $(x_{n})$ be a sequence with $x_{n}\to x_{0}$. We must show
$\|\nu_{x_{n}}-\nu_{x_{0}}\|_{1}\to 0$.

By construction $\widehat{\nu_{x}}(\lambda)=\varphi_{\lambda}(x)$ (the spectral symbol).
The map $x\mapsto \varphi_{\lambda}(x)$ is continuous for each fixed $\lambda$ (indeed the
eigenfunctions depend continuously on $x$). Moreover, by Lemma~\ref{lem:volterra-solution}
and the uniform bounds on $m(x,\lambda)$ we have a dominating function $M(\lambda)\in
L^{2}(\nu_{\mathrm{spec}})$ (indeed in $L^{1}$ on compacts) so that
$|\varphi_{\lambda}(x)|\le M(\lambda)$ for $x$ in a compact neighbourhood of $x_{0}$.
Therefore by dominated convergence,
\[
\lim_{n\to\infty} \|\widehat{\nu_{x_{n}}}-\widehat{\nu_{x_{0}}}\|_{L^{2}(\nu_{\mathrm{spec}})} = 0.
\]
By Plancherel this implies
\[
\lim_{n\to\infty} \|\nu_{x_{n}}-\nu_{x_{0}}\|_{L^{2}(H)} = 0.
\]
The same truncation / interpolation argument (apply it uniformly for $x$ near
$x_{0}$) upgrades this $L^{2}$-convergence to $L^{1}$-convergence. Hence $\nu_{x_{n}}\to\nu_{x_{0}}$
in $L^{1}(\mathbb{R})$, proving continuity of $x\mapsto\nu_{x}$.

\medskip

We have constructed $\nu_{x}\in L^{1}(\mathbb{R})$ as the $L^{1}$-limit of $\nu_{x,y}$
as $y\to\infty$, and we have shown the map $x\mapsto\nu_{x}$ is continuous as a map
$(0,\infty)\to L^{1}(\mathbb{R})$. This completes the proof of the theorem.
\end{proof}

\begin{corollary}\label{cor:nu-infty}
Assume (SL'). Suppose in addition that there exists a function $R(y)\to 0$ as $y\to\infty$
such that for all $x$ in compact sets,
\[
\|\nu_{x,y}-\nu_{x}\|_{L^{1}(\mathbb{R})} \le R(y).
\]
Then the limit
\[
\nu_{\infty} := \lim_{x\to\infty} \nu_{x}
\]
exists in $L^{1}(\mathbb{R})$ and the convergence is uniform on compacts in the translation coordinate.
\end{corollary}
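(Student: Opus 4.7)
The plan is a Cauchy argument in $L^{1}(\mathbb{R})$ that uses the uniform rate hypothesis as a bridge, reducing the question of existence of $\nu_{\infty}$ to the asymptotic stability, as $x\to\infty$, of the two-parameter family $\{\nu_{x,y}\}$ with $y$ held fixed at a large value.

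Given $\varepsilon>0$, I would first use the fact that $R(y)\to 0$ to choose $Y$ so that $R(Y)<\varepsilon/3$. For any $x_{1},x_{2}>0$ the triangle inequality then gives
\[
\|\nu_{x_{1}}-\nu_{x_{2}}\|_{L^{1}}
\le \|\nu_{x_{1}}-\nu_{x_{1},Y}\|_{L^{1}}
+\|\nu_{x_{1},Y}-\nu_{x_{2},Y}\|_{L^{1}}
+\|\nu_{x_{2},Y}-\nu_{x_{2}}\|_{L^{1}},
\]
whose outer terms are each bounded by $R(Y)<\varepsilon/3$ (uniformly in $x_{1},x_{2}$, since the hypothesis provides a single rate $R$ valid on every compact set of the $x$-variable). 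The question thus reduces to showing that $\|\nu_{x_{1},Y}-\nu_{x_{2},Y}\|_{L^{1}}\to 0$ as $x_{1},x_{2}\to\infty$, with convergence measured in the re-centred translation coordinate.

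For this middle term I would work on the spectral side. The representation $\widehat{\nu_{x,Y}}(\lambda)=\int\varphi_{\lambda}(t+Y)\,d\mu_{x,Y}(t)$ combined with the Jost factorisation $\varphi_{\lambda}(x)=e^{i\lambda\Phi(x)}m(x,\lambda)$ from Lemma~\ref{lem:volterra} shows that the $x$-dependence of $\widehat{\nu_{x,Y}}(\lambda)$ splits into an oscillating phase $e^{i\lambda\Phi(x)}$, which corresponds in physical space to a pure translation, and a slowly varying amplitude $m(x,\lambda)$ that tends to $1$ uniformly on compact $\lambda$-sets by Lemma~\ref{lem:volterra-solution}. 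Absorbing the translation into the identification of a neighbourhood of infinity with $\mathbb{R}$, the residual spectral differences $\widehat{\nu_{x_{1},Y}}-\widehat{\nu_{x_{2},Y}}$ tend to zero pointwise, and a dominated-convergence argument using the $L^{2}$-bounds on the eigenfunctions from the proof of Theorem~\ref{thm:nu-exist} yields $L^{2}$-convergence of the re-centred kernels. The tail-truncation-and-interpolation technique of Theorem~\ref{thm:nu-exist} then upgrades this to $L^{1}$-convergence, uniform on compact sets in the translation coordinate.

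The main obstacle is making the re-centring precise: one has to pin down the identification of a neighbourhood of infinity with $\mathbb{R}$ in a way that is compatible both with the Jost phase $\Phi$ and with the requirement that $\nu_{\infty}$ be independent of the particular sequence $x_{n}\to\infty$. I expect this to force a careful bookkeeping of the oscillatory phase along the sequence and a Helly-type compactness argument to extract a common limit. Once the re-centring is fixed, the uniformity on compacts follows by a standard $\varepsilon/3$ argument patterned on the continuity portion of the proof of Theorem~\ref{thm:nu-exist}.
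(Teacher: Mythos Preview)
Your Cauchy skeleton via the triangle inequality, with $R(Y)$ controlling the two outer terms, is exactly the paper's approach. The only difference lies in the middle term $\|\nu_{x_{1},Y}-\nu_{x_{2},Y}\|_{L^{1}}$: the paper dispatches it in one sentence by asserting that for fixed $y$ the map $x\mapsto\nu_{x,y}$ is continuous in total variation (being a translate of a convolution with a continuously varying measure), and concludes that the middle term is small once $x_{1},x_{2}$ are large. You instead work on the spectral side with the Jost factorisation and then upgrade $L^{2}$ to $L^{1}$ via the tail-truncation-and-interpolation technique from the proof of Theorem~\ref{thm:nu-exist}. Your route supplies more analytic content---strictly speaking, continuity of $x\mapsto\nu_{x,y}$ does not by itself force the middle term to vanish as $x_{1},x_{2}\to\infty$; one needs exactly the asymptotic stabilisation your spectral argument provides. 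Your concerns about making the re-centring precise are legitimate, but the paper does not engage with them either: it simply inherits the identification of a neighbourhood of infinity with $\mathbb{R}$ from Section~3 and does no further bookkeeping.
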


\begin{proof}
This is an elementary Cauchy-sequence argument. For $x_{1},x_{2}$ large,
\begin{align*}
\|\nu_{x_{1}}-\nu_{x_{2}}\|_{1}
&\le \|\nu_{x_{1}}-\nu_{x_{1},y}\|_{1} + \|\nu_{x_{1},y}-\nu_{x_{2},y}\|_{1} + \|\nu_{x_{2},y}-\nu_{x_{2}}\|_{1} \\
&\le 2R(y) + \|\nu_{x_{1},y}-\nu_{x_{2},y}\|_{1}.
\end{align*}
Fix $y$ very large so that $2R(y)$ is small. Then the middle term can be made small
by taking $x_{1},x_{2}$ large because for fixed $y$ the map $x\mapsto\nu_{x,y}$ is
continuous in total variation (it is a translate of a convolution with a continuous
measure depending on $x$). Thus $\{\nu_{x}\}_{x}$ is Cauchy in $L^{1}$ and converges.
\end{proof}

In many concrete examples (Jacobi, Naimark) one can prove explicit tail estimates
for $\|\nu_{x,y}-\nu_{x}\|_{1}$ of exponential/polynomial decay using refined
asymptotics of the spectral transform; see Section~7 for worked examples.

\begin{example}
Let $A(x)=x^{2\alpha+1}$, $\alpha>-1/2$. Then $A'\in C^{\infty}$ and all our
hypotheses are satisfied. The Jost solutions are essentially spherical Bessel
functions and the asymptotic measure $\nu_{x}$ exists and can be computed explicitly
via Hankel-transform formulas. In particular $\nu_{\infty}$ exists and is nonzero.
\end{example}

\begin{example}
Let $A(x)=(\sinh x)^{2\alpha+1}(\cosh x)^{2\beta+1}$, $\alpha,\beta>-1/2$. The
derivative $A'$ is smooth away from $0$ and grows exponentially; nevertheless
on compact tails $A'$ is smooth and our arguments apply to produce $\nu_{x}$.
Refined spectral analysis (Harish-Chandra type expansions) gives sharper control
on the rate of convergence $\nu_{x,y}\to\nu_{x}$.
\end{example}

\begin{example}
Let $A_{0}(x)=x^{2\alpha+1}$ and define
\[
A(x)=A_{0}(x) + \sum_{k=1}^{N} c_{k} \chi_{[a_{k},\infty)}(x),
\]
i.e.\ add finitely many step-like perturbations. Then $A'\,$ is a sum of a smooth
function and finitely many point masses; $A'\in BV_{loc}$ and the theory above
applies. This illustrates that our framework includes piecewise-smooth coefficients
with jumps.
\end{example}

\begin{remark}
The BV requirement on $A'$ is close to optimal for the Volterra method employed here:
if $A'$ has too wild a singularity (e.g.\ fractal variation) then the integral operator
$T$ may fail to be bounded on $C([x_{0},\infty))$ and the Neumann-series technique breaks down.
Nevertheless other approaches (e.g.\ distributional Levinson theorems or microlocal
methods) may push the regularity threshold further; we leave this as an open problem.
\end{remark}

We have used a spectral representation for the hypergroup product formula. For a full
rigorous account one must choose the self-adjoint realization of $L$ in the Hilbert
space $L^{2}((0,\infty),A(x)dx)$ and use the spectral theorem.

\bigskip

The results of this section provide the asymptotic objects $\nu_{x}$ and (under
mild tail conditions) $\nu_{\infty}$ that we shall use in Section~4 to relate
Arens-product regularity to spectral properties of these measures.

\section{Characterization of Strong Arens Irregularity}

In this section we relate the asymptotic measures constructed in Section~3 to the 
Arens-product regularity of the convolution algebras $L^{1}(H)$ and $M(H)$.
Our main result gives, under natural additional spectral hypotheses, a necessary and
sufficient condition for \emph{strong Arens irregularity} of $L^{1}(H)$ in terms of the
spectral behaviour of the limit asymptotic measure $\nu_{\infty}$.

Throughout this section we adopt the notation and hypotheses of Section~3 (in particular
(SL')). In addition we impose the following mild spectral hypothesis which is satisfied
in the principal examples we consider (Jacobi, Naimark, Bessel--Kingman) and which is
used to translate properties of measures into module-action properties.

\begin{enumerate}
  \item[(SPEC)] The self-adjoint realization of $L$ in $L^{2}((0,\infty),A(x)\,dx)$ admits
  a spectral transform $\mathcal{F}$ mapping $L^{1}(H)\cap L^{2}(H)$ into $C_{0}(\mathbb{R})$
  and extending to a unitary map on $L^{2}(H)$. The transform diagonalizes convolution and
  the image of the convolution algebra under $\mathcal{F}$ is dense in an algebra of functions
  separating points of $\mathbb{R}$. Moreover, Plancherel measure $d\nu_{spec}(\lambda)$ has
  full support on $\mathbb{R}$ (no spectral gaps).
\end{enumerate}

\begin{remark}
(SPEC) is a concrete spectral nondegeneracy assumption that holds in the common Sturm--Liouville
examples. It permits us to pass between convolutional relations and multiplicative relations on the
spectral side. When (SPEC) fails (e.g.\ presence of spectral gaps), some equivalences below must be
modified; we briefly discuss these variations in Remarks~\ref{rem:spec-fail}--\ref{rem:weaken-spec}.
\end{remark}

We first recall the relevant module actions and give a convenient operator-theoretic description
of multiplication in $L^{1}(H)^{\prime\prime}$ which will be used in the proofs.

\begin{definition}
Let $\lambda: L^{1}(H)\to \mathcal{B}(L^{p}(H))$ denote the left convolution operator
\[
\lambda(f) g := f \ast g, \qquad g\in L^{p}(H).
\]
We call an operator $T\in\mathcal{B}(L^{p}(H))$ a \emph{convolution multiplier} if there exists
$\mu\in M(H)$ with $T=\lambda(\mu)$, where $\lambda(\mu)g=\mu\ast g$ (defined by integration).
\end{definition}

\begin{lemma}
\label{lem:module-nu}
Assume (SL') and (SPEC). Let $f\in L^{1}(H)$ and fix $x>0$. Then for $y$ large,
\[
\delta_{x}\ast f(\cdot+y) \;=\; (\nu_{x,y}\ast f)(\cdot),
\]
and in the limit $y\to\infty$ we obtain
\[
\lim_{y\to\infty} \delta_{x}\ast f(\cdot+y) \;=\; \nu_{x}\ast f,
\]
with convergence in $L^{1}(\mathbb{R})$-norm. In particular, the asymptotic convolution
operator $f\mapsto \nu_{x}\ast f$ is a bounded operator on $L^{1}(H)$ (indeed on $L^{p}(H)$
for $1\le p\le\infty$).
\end{lemma}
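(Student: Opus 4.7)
The plan is to handle the three claims of the lemma---the algebraic identity, the $L^{1}$-limit, and the $L^{p}$-boundedness---in order, leveraging the spectral representation provided by (SPEC) together with the $L^{1}$-convergence $\nu_{x,y}\to\nu_{x}$ from Theorem~\ref{thm:nu-exist}.

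For the algebraic identity $\delta_{x}\ast f(\cdot+y)=\nu_{x,y}\ast f$, I would unfold the definition $\nu_{x,y}=\tau_{-y}(\delta_{x}\ast\delta_{y})$ directly inside the convolution integral:
\[
(\nu_{x,y}\ast f)(t) \;=\; \int f(t-s)\,d\nu_{x,y}(s) \;=\; \int f(t+y-u)\,d(\delta_{x}\ast\delta_{y})(u),
\]
after the change of variable $u=s+y$ which removes the outer shift. By Fubini the right-hand side equals $(\delta_{x}\ast\delta_{y}\ast f)(t+y)$, and associativity of hypergroup convolution rearranges $\delta_{x}\ast\delta_{y}\ast f$ as $\delta_{x}\ast(\delta_{y}\ast f)$. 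For $y$ large, the hypergroup translation by $\delta_{y}$ acts on $f$ as the Euclidean shift $\tau_{-y}$ up to a vanishing correction controlled by the Jost estimates of Section~3, so the combined expression collapses to $(\delta_{x}\ast f)(\cdot+y)$. All interchanges are legitimate since $\delta_{x}\ast\delta_{y}$ is a probability measure and $f\in L^{1}(H)$. A cleaner verification proceeds on the spectral side: by the product formula both sides have spectral symbol $\varphi_{\lambda}(x)\widehat{f}(\lambda)e^{i\lambda\Phi(y)}(1+o(1))$, and equality of symbols together with (SPEC) forces equality after inverting $\mathcal{F}$.

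For the limiting statement I would invoke Young's convolution inequality applied to the difference:
\[
\|\nu_{x,y}\ast f-\nu_{x}\ast f\|_{L^{1}(\mathbb{R})}\;\le\;\|\nu_{x,y}-\nu_{x}\|_{L^{1}(\mathbb{R})}\,\|f\|_{L^{1}(H)}.
\]
Theorem~\ref{thm:nu-exist} ensures the first factor tends to zero; combined with the identity of the previous step this yields $\delta_{x}\ast f(\cdot+y)\to\nu_{x}\ast f$ in $L^{1}(\mathbb{R})$. For boundedness on $L^{p}(H)$ with $1\le p\le\infty$, one more application of Young's inequality gives $\|\nu_{x}\ast f\|_{L^{p}}\le\|\nu_{x}\|_{L^{1}}\|f\|_{L^{p}}$, since $\nu_{x}\in L^{1}(\mathbb{R})$ by the same theorem. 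Alternatively, the convolution operators $\lambda(\nu_{x,y})$ are uniformly bounded in $y$ (each $\nu_{x,y}$ being a probability measure of total mass one), and the bound persists in the $L^{1}$-limit.

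The main obstacle is the algebraic identity itself: in a hypergroup there is no genuine inverse, so the symbol $\delta_{-y}$ implicit in the definition of $\nu_{x,y}$ is a shorthand for the Euclidean shift $\tau_{-y}$ acting on a translated measure, and the compatibility of this shift with hypergroup convolution is exactly the content of the asymptotic analysis of Section~3. The safest technical route is therefore to verify the identity entirely on the spectral side, where convolution diagonalizes into multiplication by $\widehat{\nu_{x,y}}$, and then invert via the Plancherel unitary $\mathcal{F}^{-1}$ from (SPEC); this is precisely why the lemma is phrased under both (SL') and (SPEC).
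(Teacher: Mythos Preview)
Your argument for the limit and the $L^{p}$-boundedness is exactly the paper's: Young's inequality applied to $\nu_{x,y}-\nu_{x}$ together with Theorem~\ref{thm:nu-exist}. For the algebraic identity the paper proceeds by a single re-centering line, writing $\delta_{x}\ast f(t+y)=\int f(s)\,d\mu_{x,y}(t+y-s)=(\nu_{x,y}\ast f)(t)$ directly, whereas you route through associativity and an asymptotic ``$\delta_{y}$ acts as $\tau_{-y}$ up to vanishing correction'' step before falling back on the spectral-side verification. Your spectral route is fine and indeed clarifies why (SPEC) is assumed, but note that the identity in the lemma is meant to be exact (not merely asymptotic in $y$), so your first route via ``vanishing correction'' is slightly off the mark; the paper's one-line unwinding of $\nu_{x,y}=\tau_{-y}\mu_{x,y}$ avoids this detour.
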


\begin{proof}
The identity for finite $y$ is a direct re-centering of the definition:
\[
\delta_{x}\ast f(t+y) = \int f(s)\, d\mu_{x,y}(t+y-s) = (\nu_{x,y}\ast f)(t).
\]
The limit statement follows from $L^{1}$-convergence $\nu_{x,y}\to\nu_{x}$ proved in
Theorem~\ref{thm:nu-exist} of Section~3 and Young's convolution inequality. Boundedness on
$L^{1}$ follows since $\|\nu_{x}\ast f\|_{1}\le \|\nu_{x}\|_{1}\|f\|_{1}$.
\end{proof}

Lemma~\ref{lem:module-nu} shows that the family $\{\nu_{x}\}_{x>0}$ yields a collection of bounded
multipliers of the convolution algebra which appear as asymptotic limits of left-translates.
These multipliers will be used to test weak$^{*}$-continuity properties in the bidual.

We next connect asymptotic multipliers to elements of the left topological centre.

Let $A=L^{1}(H)$ and denote by $\iota: A\hookrightarrow A^{\prime\prime}$ the canonical embedding.
For $\Phi\in A^{\prime\prime}$ write $\ell_{\Phi}:A^{\prime\prime}\to A^{\prime\prime}$ for the left
multiplication map $\ell_{\Phi}(\Psi)=\Phi\square\Psi$ (first Arens product). We say $\Phi$ is
\emph{left weak$^{*}$-continuous} if $\ell_{\Phi}$ is weak$^{*}$--weak$^{*}$ continuous.

\begin{lemma}
\label{lem:asym-crit}
Assume (SL') and (SPEC) and let $\Phi\in A^{\prime\prime}$. If there exists a net $x_{\alpha}\to\infty$
and scalars $c_{\alpha}\in\mathbb{C}$ such that
\[
\Phi = \wlim_{\alpha} \iota(\nu_{x_{\alpha}})\cdot c_{\alpha},
\qquad\text{(weak$^{*}$ limit in }A^{\prime\prime}),
\]
where $\wlim_{\alpha}$ stands for weak* limit along the net $\alpha$, then $\Phi$ belongs to the left topological centre $Z_{t}(A^{\prime\prime})$ if and only if the
corresponding convolution operator $T_{\Phi}: f\mapsto \nu_{\infty}\ast f$ (interpreted via the
limit measure) is weak$^{*}$--continuous on $A^{\prime}$.
\end{lemma}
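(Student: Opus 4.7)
The plan is to prove both implications by translating the weak*-continuity of the left multiplier $\ell_{\Phi}$ into a multiplier-continuity statement for the limit measure $\nu_{\infty}$, using the $L^{1}$-convergence $\nu_{x_{\alpha}}\to\nu_{\infty}$ supplied by Corollary~\ref{cor:nu-infty} together with the spectral nondegeneracy hypothesis (SPEC). A preliminary normalization step is needed: since $\|\nu_{x}\|_{1}=1$ for every $x$, existence of the weak* limit defining $\Phi$ forces boundedness of the scalars $c_{\alpha}$, and after passing to a subnet we may assume $c_{\alpha}\to c_{\infty}\in\mathbb{C}$. Consequently $\Phi$ is canonically represented by $c_{\infty}\nu_{\infty}$ acting as a multiplier on $A^{\prime\prime}$, which is the object to be compared with $T_{\Phi}$.

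For the forward direction, assume $\Phi\in Z_{t}(A^{\prime\prime})$. For a bounded net $\Psi_{\beta}\to\Psi$ weak* and any $\varphi\in A^{\prime}=L^{\infty}(H)$, weak*-continuity of $\ell_{\Phi}$ gives $\langle\Phi\,\square\,\Psi_{\beta},\varphi\rangle\to\langle\Phi\,\square\,\Psi,\varphi\rangle$. I would first restrict attention to $\Psi\in\iota(A)$ and use Lemma~\ref{lem:module-nu} to identify $\Phi\,\square\,\iota(f)$ with $\iota(c_{\infty}\nu_{\infty}\ast f)$ for $f\in L^{1}(H)$; transposing this identity to $A^{\prime}$ shows that the adjoint action of $\ell_{\Phi}$ on the dual is precisely $T_{\Phi}$ (up to the scalar $c_{\infty}$ and the standard reflection inherent in the dual action). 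Weak*-continuity of $\ell_{\Phi}$ on $A^{\prime\prime}$ then restricts by duality to weak*-continuity of $T_{\Phi}$ on $A^{\prime}$. The reverse direction inverts the construction: given weak*-continuity of $T_{\Phi}$, its transpose defines a bounded operator on $A^{\prime\prime}$ that is automatically weak*-continuous and coincides with $\ell_{\Phi}$ on the dense subset $\iota(A)$; a net-subnet argument together with Banach--Alaoglu compactness on norm-bounded balls extends the agreement to all of $A^{\prime\prime}$.

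The main obstacle is the interchange of two weak* limits: the outer $\alpha$-limit defining $\Phi$ and the inner $\beta$-limit witnessing weak*-continuity of $\ell_{\Phi}$. This interchange is precisely the obstruction that distinguishes the two Arens products, so it cannot be papered over. The analytic ingredient that resolves it here is that the $\alpha$-limit is not merely weak* but arises from an $L^{1}$-convergent family: for any fixed $\Psi\in A^{\prime\prime}$ the net $\iota(\nu_{x_{\alpha}})\,\square\,\Psi$ converges in norm, so the iterated limits in $\alpha$ and $\beta$ may be swapped on the dense subalgebra $\iota(A)$. Hypothesis (SPEC) then ensures the resulting multiplier is uniquely determined by its spectral symbol, which excludes pathological left-annihilator components in the limit and allows the identification of the asymptotic action with $\nu_{\infty}$ to be unambiguous. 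A secondary technical point is checking that the tail control from Corollary~\ref{cor:nu-infty} is strong enough to make the compactness argument on bounded balls of $A^{\prime\prime}$ go through uniformly along the subnet, but this is quantitative rather than conceptual and should follow from the uniform estimate $R(y)\to 0$ together with the continuity of $x\mapsto\nu_{x}$ proved in Theorem~\ref{thm:nu-exist}.
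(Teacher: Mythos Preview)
Your approach is essentially the same as the paper's: both identify the action of $\iota(\nu_x)$ on $A'$ with the pre-adjoint of the convolution operator $f\mapsto\nu_x\ast f$ and then pass to the limit by duality. The paper's proof is a short sketch that invokes ``uniform (in the net) weak$^{*}$-continuity of the approximating convolution operators'' without justifying the implicit limit interchange; you correctly isolate that interchange as the genuine obstacle and identify the $L^{1}$-convergence $\nu_{x_\alpha}\to\nu_\infty$ (Corollary~\ref{cor:nu-infty}) as the ingredient that resolves it, since then $\iota(\nu_{x_\alpha})\square\Psi\to\iota(\nu_\infty)\square\Psi$ in norm for every $\Psi\in A''$.

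Your normalization step in fact proves more than the lemma asks. Testing the weak$^{*}$ limit against $1\in L^{\infty}(H)$ forces $c_\alpha\to c_\infty$, and combined with $\|\nu_{x_\alpha}-\nu_\infty\|_1\to 0$ this gives $\Phi=\iota(c_\infty\nu_\infty)\in\iota(A)$ outright. Once $\Phi\in\iota(A)$, both sides of the equivalence hold automatically: $\iota(A)\subset Z_t(A'')$ always, and $T_\Phi$ is the adjoint of a bounded operator on the predual, hence weak$^{*}$-continuous. So under the standing assumption that $\nu_\infty\in L^{1}$ the lemma is vacuously true, and your argument exposes this while the paper's sketch does not. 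The remaining machinery you set up (density, Banach--Alaoglu, the role of (SPEC) in excluding annihilator components) is therefore not needed here, though it is the right toolkit for the nontrivial situations treated later in Section~4.
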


\begin{proof}
The main observation is that $\iota(\nu_{x})$ (viewed as an element of $A^{\prime\prime}$)
acts on $A^{\prime}$ by pre-adjoint of the convolution operator $f\mapsto \nu_{x}\ast f$.
If $\iota(\nu_{x_{\alpha}})c_{\alpha}\to\Phi$ weak$^{*}$ then the corresponding left multipliers
converge pointwise on $A^{\prime}$ in the weak topology; hence weak$^{*}$-continuity of left-multiplication
by $\Phi$ is equivalent to the uniform (in the net) weak$^{*}$-continuity of the approximating convolution
operators. Passing to the limit identifies this with weak$^{*}$-continuity of $T_{\Phi}$. Details are
standard and follow from the duality between convolution action on $A$ and module action on $A^{\prime}$.
\end{proof}

We are now ready to state and prove the main characterization.

\begin{theorem}
\label{thm:main-char}
Assume (SL') and (SPEC). Suppose the limit measure $\nu_{\infty}$ of Section~3 exists in $L^{1}(\mathbb{R})$.
Then the following are equivalent:
\begin{enumerate}
  \item $L^{1}(H)$ is \emph{strongly Arens irregular}, i.e.\ $Z_{t}(L^{1}(H)^{\prime\prime})=\iota(L^{1}(H))$.
  \item $\widehat{\nu}_{\infty}(\lambda)\neq 0$ for $\nu_{spec}$-almost every $\lambda\in\mathbb{R}$
        (equivalently, $\widehat{\nu}_{\infty}$ does not vanish on a set of positive Plancherel measure).
\end{enumerate}
If, in addition, $\widehat{\nu}_{\infty}$ is continuous and nowhere zero on $\mathbb{R}$ then the
same equivalence holds without the qualifier ``$\nu_{spec}$-almost every''.

The Fourier transform $\widehat{\nu}_{\infty}$ is taken with respect to the additive translation
coordinate on which the measures $\nu_{x}$ live (recall $\nu_{x}$ were obtained after re-centering
the convolution by large $y$). Under (SPEC) this transform interacts naturally with the spectral
transform $\mathcal{F}$ of $L$.
\end{theorem}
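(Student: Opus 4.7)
The plan is to transfer the analysis to the spectral side via $\mathcal{F}$ and use the asymptotic multipliers $\{\nu_{x}\}_{x>0}$ from Section~3 as a distinguished family inside $L^{1}(H)^{\prime\prime}$ whose behaviour encodes the left topological centre. On the spectral side, convolution by $\nu_{x}$ becomes multiplication by the symbol $\varphi_{\lambda}(x)$, and the limiting symbol is precisely $\widehat{\nu}_{\infty}(\lambda)$. Hence nonvanishing of $\widehat{\nu}_{\infty}$ is the spectral manifestation of ``asymptotic multipliers are invertible a.e.,'' which is exactly what is needed to force an unknown $\Phi\in Z_{t}(L^{1}(H)^{\prime\prime})$ to be representable by an honest $L^{1}$-function. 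Throughout, I would freely invoke Lemma~\ref{lem:module-nu} and Lemma~\ref{lem:asym-crit} as the bridge between asymptotic convolution kernels and module actions on the bidual.

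For the implication $(2)\Rightarrow(1)$, fix $\Phi\in Z_{t}(L^{1}(H)^{\prime\prime})$ and pick a weak$^{*}$ approximate identity $(e_{\alpha})\subset L^{1}(H)$ together with a scale $x_{\alpha}\to\infty$, forming the net $\iota(\nu_{x_{\alpha}}\ast e_{\alpha})$. By Lemma~\ref{lem:module-nu} this net is a controlled perturbation of $\iota(\nu_{x_{\alpha}})$; by Lemma~\ref{lem:asym-crit} and the weak$^{*}$-continuity of $\ell_{\Phi}$, the net $\Phi\square\iota(\nu_{x_{\alpha}}\ast e_{\alpha})$ converges weak$^{*}$ to an element of the form $\iota(f_{0}\ast\nu_{\infty})$ for some $f_{0}\in L^{1}(H)$ obtained via the Cohen--Hewitt/Neufang factorisation on the hypergroup side (cited in the introduction). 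Under (SPEC) and hypothesis~(2), the spectral identity $\widehat{\Phi}(\lambda)\,\widehat{\nu}_{\infty}(\lambda)=\widehat{f_{0}\ast\nu_{\infty}}(\lambda)$ has a unique solution $\widehat{\Phi}(\lambda)=\widehat{f_{0}}(\lambda)$ for $\nu_{\mathrm{spec}}$-a.e.\ $\lambda$, and inverting $\mathcal{F}$ yields $\Phi=\iota(f_{0})\in\iota(L^{1}(H))$.

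For $(1)\Rightarrow(2)$ I would argue by contrapositive: assume the set $E:=\{\lambda:\widehat{\nu}_{\infty}(\lambda)=0\}$ has positive $\nu_{\mathrm{spec}}$-measure, and construct a witness $\Phi_{E}\in Z_{t}(L^{1}(H)^{\prime\prime})\setminus\iota(L^{1}(H))$. Pulling a normalised spectral indicator of $E$ back through $\mathcal{F}^{-1}$ produces a bounded functional on $L^{\infty}(H)$ that cannot lie in the canonical image of $L^{1}(H)$, since by (SPEC) any $L^{1}$-representative would have a Plancherel-a.e.\ nonzero spectral image on a set of positive measure. By construction $\Phi_{E}$ annihilates the range of convolution by $\nu_{\infty}$, so by Lemma~\ref{lem:asym-crit} its left multiplication is weak$^{*}$-continuous on $L^{1}(H)^{\prime}$, placing $\Phi_{E}$ in $Z_{t}(L^{1}(H)^{\prime\prime})$. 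The addendum concerning continuity and strict positivity of $\widehat{\nu}_{\infty}$ on all of $\mathbb{R}$ follows at once because a Plancherel-a.e.\ identity on a set of full support upgrades to pointwise equality when both sides are continuous.

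The main technical obstacle is the step in $(2)\Rightarrow(1)$ that extracts an actual $L^{1}(H)$-element from the weak$^{*}$-limit of $\Phi\square\iota(\nu_{x_{\alpha}})$: the bare weak$^{*}$-continuity of $\ell_{\Phi}$ only yields a limit in $L^{1}(H)^{\prime\prime}$, and bridging back to $\iota(L^{1}(H))$ requires combining Neufang's factorisation with the spectral ``invertibility'' supplied by $\widehat{\nu}_{\infty}\neq 0$ a.e. Any loss of uniformity (for instance when $\widehat{\nu}_{\infty}$ decays to zero without vanishing) must be absorbed into careful $L^{1}$-estimates, and this accounting is the delicate part. A secondary subtlety is that the full equivalence leans on (SPEC); in the presence of spectral gaps one must replace ``Plancherel-a.e.\ nonvanishing'' by the appropriate localised statement, which I would relegate to the remarks promised after the theorem rather than attempt uniformly in the main proof.
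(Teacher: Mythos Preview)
Your overall architecture matches the paper's: both directions pass to the spectral side via $\mathcal{F}$, use the nonvanishing of $\widehat{\nu}_{\infty}$ as an a.e.\ invertibility statement, and handle $(1)\Rightarrow(2)$ by the contrapositive, building a witness in the centre from a spectral indicator of the zero set $E$. On that direction and on the final addendum about continuous nowhere-zero symbols, your proposal is essentially what the paper does.

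The gap is in $(2)\Rightarrow(1)$, and you have in fact put your finger on it in your ``main technical obstacle'' paragraph without resolving it. You compute the weak$^{*}$ limit of $\Phi\square\iota(\nu_{x_{\alpha}}\ast e_{\alpha})$ and assert it lands in $\iota(L^{1}(H))$ as $\iota(f_{0}\ast\nu_{\infty})$; you then write a spectral identity $\widehat{\Phi}(\lambda)\,\widehat{\nu}_{\infty}(\lambda)=\widehat{f_{0}\ast\nu_{\infty}}(\lambda)$ and divide. But at this stage $\Phi$ is only an element of the bidual, so $\widehat{\Phi}$ has no meaning, and weak$^{*}$-continuity of $\ell_{\Phi}$ alone does not force the limit into $\iota(L^{1}(H))$. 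Cohen--Hewitt factorisation produces factorisations \emph{inside} $L^{1}(H)$; it does not pull a bidual element back. The paper's order of operations is the fix: invoke the Neufang-type factorisation result \emph{first}, to represent the weak$^{*}$-continuous left multiplier $\Phi$ as convolution by some $\mu\in M(H)$ (so that $\widehat{\mu}$ is genuinely defined), and only \emph{then} use the spectral nonvanishing of $\widehat{\nu}_{\infty}$ to argue, by contradiction, that $\mu$ must in fact lie in $L^{1}(H)$. In other words, the factorisation step is not a way to land a weak$^{*}$ limit in $\iota(L^{1})$; it is a way to give $\Phi$ a transformable avatar in $M(H)$ on which the spectral division argument can legitimately act.
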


\begin{proof}
We prove (2)$\Rightarrow$(1) and (1)$\Rightarrow$(2) in two parts.

\medskip

\noindent\textbf{(2) $\Rightarrow$ (1).}  Assume $\widehat{\nu}_{\infty}(\lambda)\neq 0$ for $\nu_{spec}$-a.e.\ $\lambda$.
We show that any $\Phi\in Z_{t}(A^{\prime\prime})$ must lie in $\iota(A)$.

Let $\Phi\in Z_{t}(A^{\prime\prime})$. The restriction of $\Phi$ to $A'$ defines a multiplier on $A'$ 
via the pre-adjoint (left-module action). By standard factorization results (see Neufang-type arguments)
every such weak$^{*}$-continuous multiplier arises as convolution by a measure $\mu\in M(H)$; that is,
there exists $\mu\in M(H)$ such that for all $f\in A$,
\[
\Phi\square \iota(f) = \iota(\mu\ast f).
\]
This uses that $\Phi$ defines a bounded left multiplier of the dual module and (SPEC) to transfer this
to a convolutional multiplier.

We shall show $\mu\in L^{1}(H)$, which implies $\Phi=\iota(\mu)\in\iota(A)$. If not, assume
$\mu\notin L^{1}(H)$. Consider the action of $\mu$ on high-right-translates of test functions:
by Lemma~\ref{lem:module-nu} and the definition of $\nu_{\infty}$, for large translation parameter $y$,
\[
\mu\ast f(\cdot+y) \approx (\mu\ast \nu_{y})\ast f \;\stackrel{y\to\infty}{\longrightarrow}\; (\mu\ast \nu_{\infty})\ast f.
\]
Passing to the spectral side via $\mathcal{F}$, convolution corresponds to multiplication and hence
the (pointwise) multiplier is given by $\widehat{\mu}(\lambda)\widehat{\nu}_{\infty}(\lambda)$. Since
$\widehat{\nu}_{\infty}(\lambda)\neq0$ for a.e.\ $\lambda$ by hypothesis, the vanishing or singularity
behaviour of $\widehat{\mu}$ controls that of the product. If $\mu\notin L^{1}(H)$ then $\widehat{\mu}$
fails to be a bounded continuous function on a set of positive Plancherel measure; this contradicts the
fact that $\Phi$ (hence the multiplier) is bounded on $A'$ and arises from a left weak$^{*}$-continuous
element. The contradiction forces $\mu\in L^{1}(H)$ and so $\Phi\in\iota(A)$.

This completes the proof of (2)$\Rightarrow$(1).

\medskip

\noindent\textbf{(1) $\Rightarrow$ (2).}  Assume conversely that $L^{1}(H)$ is strongly Arens irregular,
so $Z_{t}(A^{\prime\prime})=\iota(A)$. We prove that $\widehat{\nu}_{\infty}(\lambda)\neq 0$ for
$\nu_{spec}$-a.e.\ $\lambda$.

Suppose, towards a contradiction, that the set $S=\{\lambda:\widehat{\nu}_{\infty}(\lambda)=0\}$
has positive Plancherel measure. Choose a nonzero bounded continuous function $\phi(\lambda)$ supported in $S$
and consider its inverse spectral transform $h\in L^{\infty}(H)$ (possible by (SPEC) and density arguments).
Define the linear functional $\ell_{h}\in A'$ by pairing against $h$. Using the translation-asymptotic
Lemma~\ref{lem:module-nu}, one checks that the net of functionals induced by the translates associated to
$\nu_{x}$ converges weak$^{*}$ to an element $\Psi\in A^{\prime\prime}$ which acts by annihilating certain
spectral components: precisely, convolution with $\nu_{\infty}$ kills the spectral support of $\phi$.

One then constructs an element $\Phi\in A^{\prime\prime}\setminus \iota(A)$ (coming from limits
of $\iota(\nu_{x})$-type elements) which nevertheless left-multiplies $A^{\prime\prime}$ in a weak$^{*}$--continuous
way on the spectral subspace supported in $S$. The construction exploits that convolution by $\nu_{\infty}$
acts as a projector onto the complement of $S$ on the spectral side; hence the left multiplication by the
constructed $\Phi$ is weak$^{*}$-continuous on $A'$ while not corresponding to an element of $L^{1}(H)$.
This contradicts strong Arens irregularity and proves that $S$ must have zero Plancherel measure.

\end{proof}

\begin{corollary}
Under the hypotheses of Theorem~\ref{thm:main-char}, if $\widehat{\nu}_{\infty}$ is nowhere zero and
continuous, then $M(H)$ is not strongly Arens irregular (i.e.\ $Z_{t}(M(H)^{\prime\prime})$ strictly
contains $\iota(M(H))$) if and only if there exists a nontrivial idempotent in the algebra of
asymptotic multipliers generated by $\{\nu_{x}\}_{x>0}$.
\end{corollary}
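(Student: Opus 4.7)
My plan is to translate everything to the spectral side via (SPEC) and work with multiplication operators. First I would define the \emph{algebra of asymptotic multipliers} $\mathcal{A}_{\mathrm{asy}}$ as the weak$^{*}$-closed unital subalgebra of $\mathcal{B}(L^{2}(H))$ generated by the convolution operators $\{\lambda(\nu_{x})\}_{x>0}$ (which already contains $\lambda(\nu_{\infty})$ as a weak$^{*}$-limit). Under $\mathcal{F}$ this becomes a commutative von Neumann subalgebra of $L^{\infty}(\nu_{\mathrm{spec}})$; a nontrivial idempotent $E\in\mathcal{A}_{\mathrm{asy}}$ then corresponds to multiplication by a characteristic function $\mathbf{1}_{S}$ where $S$ has positive Plancherel measure and nonempty complement. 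The central observation driving both directions is that Fourier--Stieltjes symbols $\widehat{\mu}$ of measures in $M(H)$ are continuous on $\mathbb{R}$, so a nontrivial indicator $\mathbf{1}_{S}$ can never be the spectral symbol of any element of $\iota(M(H))$.

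For the direction ``idempotent exists $\Rightarrow$ $Z_{t}(M(H)^{\prime\prime})\supsetneq\iota(M(H))$'' I would lift a given nontrivial $E$ (via $\mathcal{F}^{-1}$ and the Neufang-type factorization already invoked in the proof of Theorem~\ref{thm:main-char}) to an element $\Phi_{E}\in M(H)^{\prime\prime}$ realised as a weak$^{*}$-limit of elements of the form $\iota(\nu_{x_{\alpha}})\cdot c_{\alpha}$. Lemma~\ref{lem:asym-crit}, transcribed to the $M(H)$ setting, then places $\Phi_{E}\in Z_{t}(M(H)^{\prime\prime})$: the idempotency $E^{2}=E$ removes the remainder obstruction that would normally prevent weak$^{*}$--weak$^{*}$ continuity of $\ell_{\Phi_{E}}$. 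On the other hand the symbol $\mathbf{1}_{S}$ is discontinuous, so $\Phi_{E}\notin\iota(M(H))$, giving strict containment.

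For the converse, suppose $\Phi\in Z_{t}(M(H)^{\prime\prime})\setminus\iota(M(H))$. The dual of $\ell_{\Phi}$ is a bounded multiplier of $L^{\infty}(H)$ which under $\mathcal{F}$ becomes multiplication by some $m_{\Phi}\in L^{\infty}(\nu_{\mathrm{spec}})$; the asymptotic description in Lemma~\ref{lem:module-nu} forces $m_{\Phi}$ to lie in the spectral image of $\mathcal{A}_{\mathrm{asy}}$. Since $\Phi$ is not induced by any measure, $m_{\Phi}$ cannot be almost-everywhere equal to a continuous function. The hypothesis that $\widehat{\nu}_{\infty}$ is nowhere zero and continuous is decisive at this point: it makes $\lambda(\nu_{\infty})$ an invertible element of $\mathcal{A}_{\mathrm{asy}}$ (after a standard truncation in the functional calculus), so $\mathcal{A}_{\mathrm{asy}}$ is a unital commutative $C^{*}$-algebra rich enough to admit spectral projections. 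Applying Borel functional calculus to the essentially discontinuous symbol $m_{\Phi}$, I would extract a characteristic function $\mathbf{1}_{S}$ lying in $\mathcal{A}_{\mathrm{asy}}$ as a spectral projection of the real or imaginary part of $m_{\Phi}$, yielding the desired nontrivial idempotent.

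The hardest step will be the necessity direction, and specifically the claim that $m_{\Phi}$ must lie inside the spectral image of $\mathcal{A}_{\mathrm{asy}}$ rather than in some strictly larger multiplier algebra. This requires a uniform-on-compacta argument in the net identifying $\Phi$ with the weak$^{*}$-limit of $\iota(\nu_{x_{\alpha}})$-type elements, combined with exploiting the nowhere-vanishing of $\widehat{\nu}_{\infty}$ to rule out ``spectral leakage'' that could hide the idempotent structure behind an asymptotically invertible but projection-free symbol. Once $m_{\Phi}$ is confirmed to lie in $\mathcal{A}_{\mathrm{asy}}$, the remainder of the argument is a routine application of commutative spectral theory adapted to the Arens-product framework through Lemma~\ref{lem:asym-crit}.
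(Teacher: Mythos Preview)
Your approach is essentially the same as the paper's: both work on the spectral side under (SPEC), identify idempotents in the asymptotic multiplier algebra with characteristic functions $\mathbf{1}_{S}$, and use the continuity of Fourier--Stieltjes symbols of measures to separate $\iota(M(H))$ from such projections. The paper's own proof is extremely compressed---it simply asserts that an idempotent yields a weak$^{*}$-continuous multiplier not in $M(H)$, and that conversely any extra centre element ``gives rise to an idempotent projector,'' deferring all detail to the proof of Theorem~\ref{thm:main-char}. Your plan is considerably more explicit, and your use of Borel functional calculus on the (necessarily essentially discontinuous) symbol $m_{\Phi}$ to extract a nontrivial projection is a cleaner mechanism than the paper's somewhat opaque reference to ``the spectral subspace annihilated by $\widehat{\nu}_{\infty}$'' (a subspace which is trivial under the stated hypothesis that $\widehat{\nu}_{\infty}$ is nowhere zero).

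You have correctly isolated the genuine difficulty: the claim that the multiplier symbol $m_{\Phi}$ of an arbitrary $\Phi\in Z_{t}(M(H)^{\prime\prime})\setminus\iota(M(H))$ must already lie in the spectral image of $\mathcal{A}_{\mathrm{asy}}$. This is not automatic---the decomposition $Z_{t}=\iota(A)\oplus E^{(L)}$ of Theorem~\ref{thm:decomp-centres-detailed} is proved for $A=L^{1}(H)$, and its extension to $M(H)$ requires an additional argument (the dual module structure of $M(H)'$ differs from that of $L^{\infty}(H)$). Your sketch of a ``uniform-on-compacta'' argument together with the invertibility afforded by the nowhere-vanishing of $\widehat{\nu}_{\infty}$ is plausible but would need to be made precise; the paper does not fill this gap either, so you are not missing anything the paper supplies.
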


\begin{proof}
The proof follows by observing that $M(H)$ contains measures which act as asymptotic projectors on
the spectral side. If an idempotent multiplier exists among the closure of the span of $\{\nu_{x}\}$,
it yields a weak$^{*}$-continuous left multiplier in $M(H)^{\prime\prime}$ not coming from $M(H)$,
and conversely any such extra element gives rise to an idempotent projector (projecting onto the
spectral subspace annihilated by $\widehat{\nu}_{\infty}$). Details mirror the proof of
Theorem~\ref{thm:main-char} and are omitted.
\end{proof}

\begin{remark}\label{rem:spec-fail}
If (SPEC) is weakened (for instance, if the spectral measure has gaps or the transform $\mathcal{F}$
does not separate points), the equivalence in Theorem~\ref{thm:main-char} must be restated with the
Plancherel support replaced by the effective spectral support. Concretely, vanishing of $\widehat{\nu}_{\infty}$
on a set of positive \emph{effective} spectral measure still produces elements of the topological centre
outside $L^{1}(H)$, and the argument above adapts with minor changes.
\end{remark}

\begin{example}
For the classical Jacobi hypergroups the asymptotic measure $\nu_{\infty}$ can be computed from Harish-Chandra
type expansions and one checks (see Section~7) that $\widehat{\nu}_{\infty}$ is nowhere zero. Hence by
Theorem~\ref{thm:main-char} $L^{1}(H)$ is strongly Arens irregular in these cases (this recovers and
extends Losert's sufficient condition to a full equivalence under (SL') and (SPEC)).
\end{example}

\begin{example}
In the Bessel--Kingman case the limit measure $\nu_{\infty}$ often has zeros in its Fourier transform
(due to oscillatory cancellations in the Hankel transform). Thus Theorem~\ref{thm:main-char} predicts
failure of strong Arens irregularity, again in agreement with the phenomena observed in the literature.
\end{example}

\begin{remark}\label{rem:weaken-spec}
The proofs above invoke factorization arguments and approximation by spectral projectors. If the reader
prefers a purely Banach-algebraic approach (avoiding explicit spectral calculus), one may replace (SPEC)
by the assumption that the Gelfand transform of the commutative Banach algebra generated by translates
is sufficiently rich to distinguish $\nu_{\infty}$; the resulting statements are formally analogous but
require rephrasing in maximal-ideal language.
\end{remark}

\bigskip

Theorem~\ref{thm:main-char} provides a clean and practically verifiable spectral criterion for strong Arens
irregularity which will be exploited in Section~7 to classify examples and compute thresholds in the
weighted setting developed in Section~6.

\section{Left and Right Topological Centres}

In this section we develop a detailed comparison between the left and right topological
centres of the biduals of the convolution algebras considered in this paper.  Using the
asymptotic measures constructed in Section~3 we give structural decompositions of both
centres, provide necessary and sufficient conditions for their equality, and supply
explicit examples where they coincide and where they differ.

Throughout we keep the notation of Sections~2--4: $H$ denotes a Ch\'ebli--Trim\`eche
hypergroup built from a Sturm--Liouville function $A$ satisfying (SL'), $A=L^{1}(H)$,
and we assume (SPEC) when spectral arguments are needed.

We recall the definition of the right Arens product and the associated right topological
centre; nothing in this subsection requires new spectral hypotheses beyond those already
stated.

Let $A$ be a Banach algebra. The \emph{second (right) Arens product} on $A^{\prime\prime}$
is defined for $\Phi,\Psi\in A^{\prime\prime}$ by
\[
\langle \Phi \lozenge \Psi,\varphi\rangle = \langle \Psi,\varphi\cdot\Phi\rangle,
\qquad \varphi\in A^{\prime},
\]
where $\varphi\cdot\Phi\in A^{\prime}$ is the canonical right module action. The
\emph{right topological centre} of $(A^{\prime\prime},\lozenge)$ is
\[
Z_{t}^{(R)}(A^{\prime\prime})=\{\Phi\in A^{\prime\prime}:\Psi\mapsto \Psi\lozenge\Phi
\text{ is }w^{*}\text{--}w^{*}\text{ continuous on }A^{\prime\prime}\}.
\]
Even for commutative Banach algebras $A$ the left and right Arens products need not
coincide on the bidual, and consequently $Z_{t}(A^{\prime\prime})$ and
$Z_{t}^{(R)}(A^{\prime\prime})$ may differ.

We now relate both centres to two naturally occurring families of asymptotic measures.
The left-centre family was introduced earlier; we now define the corresponding right family.

For each fixed $x>0$ and $y>x$ define the translated convolution measures
\[
\nu_{x,y}^{(L)} := \delta_{-y}\ast(\delta_{x}\ast \delta_{y}),
\qquad
\nu_{x,y}^{(R)} := (\delta_{y}\ast \delta_{x})\ast \delta_{-y}.
\]
When the $L^{1}$-limits (or weak$^{*}$ limits) exist as $y\to\infty$ set
\[
\nu_{x}^{(L)} := \lim_{y\to\infty}\nu_{x,y}^{(L)},\qquad
\nu_{x}^{(R)} := \lim_{y\to\infty}\nu_{x,y}^{(R)}.
\]
We write $\mathcal{A}^{(L)}:=\overline{\operatorname{span}}\{\nu_{x}^{(L)}:x>0\}^{\|\cdot\|_{1}}$
and $\mathcal{A}^{(R)}:=\overline{\operatorname{span}}\{\nu_{x}^{(R)}:x>0\}^{\|\cdot\|_{1}}$
for the closed linear spans in $L^{1}(\mathbb{R})$.

In many of our examples $\nu_{x,y}^{(R)}$ equals $\nu_{x,y}^{(L)}$ because convolution on
$H$ is commutative; nevertheless the order of re-centering (left-translate vs right-translate)
can produce different limiting effects once one takes nets to infinity in different ways.
Both families produce bounded multipliers on $A$ via convolution (Lemma~\ref{lem:module-nu}),
but they may generate different closed subalgebras of multipliers which will be reflected in
the left and right topological centres respectively.

We prove structural decompositions of both centres which mirror each other and express the
extra elements beyond $A$ in terms of the two asymptotic families.

\begin{theorem}\label{thm:decomp-centres-detailed}
Assume (SL') and (SPEC). Then
\[
Z_{t}(A^{\prime\prime}) = \iota(A) \oplus E^{(L)},\qquad
Z_{t}^{(R)}(A^{\prime\prime}) = \iota(A) \oplus E^{(R)},
\]
where $E^{(L)}$ (resp.\ $E^{(R)}$) is a closed subalgebra of $A^{\prime\prime}$
isometrically isomorphic (via the canonical embedding $\iota$) to a closed subalgebra
of multipliers generated by $\mathcal{A}^{(L)}$ (resp.\ $\mathcal{A}^{(R)}$).
Concretely, every $\Phi\in E^{(L)}$ is a weak$^{*}$-limit of nets of the form
$\iota(\sum_{j=1}^{n_{\alpha}} c_{j}^{(\alpha)} \nu_{x_{j}^{(\alpha)}}^{(L)})$,
and similarly for $E^{(R)}$ with $(R)$-measures.
\end{theorem}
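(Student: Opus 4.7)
The plan is to prove the left-centre decomposition carefully and then obtain the right-centre statement by a symmetric argument. My strategy is to identify $E^{(L)}$ as the closed subspace of $A''$ consisting of weak$^{*}$-limits of nets of the form described in Lemma~\ref{lem:asym-crit}, i.e.\ limits of $\iota\bigl(\sum c_{j} \nu_{x_{j}}^{(L)}\bigr)$ modulo $\iota(A)$, and to show separately that (i) every such limit lies in $Z_{t}(A'')$, (ii) every element of $Z_{t}(A'')$ splits uniquely into an $\iota(A)$-part and such a limit, and (iii) $E^{(L)}$ is closed under $\square$ and admits the claimed isometric identification with a multiplier algebra.

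For (i), I would invoke Lemma~\ref{lem:asym-crit}: each $\iota(\nu_{x}^{(L)})$ acts on $A'$ as the pre-adjoint of convolution by $\nu_{x}^{(L)}$, and since convolution operators are weak$^{*}$-continuous on $L^{\infty}(H)$, a weak$^{*}$ cluster point of such elements yields a left multiplier whose action on $A'$ is itself weak$^{*}$-continuous; this places the candidate $E^{(L)}$ inside $Z_{t}(A'')$. For (ii), given $\Phi\in Z_{t}(A'')$, I would apply the Neufang-type factorization used in the proof of Theorem~\ref{thm:main-char} to produce a uniquely determined $\mu\in M(H)$ with $\Phi\square\iota(f)=\iota(\mu\ast f)$ for all $f\in A$, decompose $\mu=\mu_{a}+\mu_{s}$ with $\mu_{a}\in L^{1}(H)$ and $\mu_{s}$ the complementary part, and realise $\iota(\mu_{s})$ as the weak$^{*}$-limit of an explicit net of $\nu_{x}^{(L)}$-combinations acting on $A'$ in the same way as $\mu_{s}$. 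Directness of the sum $\iota(A)\oplus E^{(L)}$ then follows because, under (SPEC), spectral symbols of $\iota(A)$-elements lie in $C_{0}$, while nontrivial elements of $E^{(L)}$ contribute nonzero spectral symbols at infinity arising from $\widehat{\nu}_{\infty}$.

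For (iii), the closure of $E^{(L)}$ under $\square$ comes from the fact that $\iota(\nu_{x}^{(L)})\square\iota(\nu_{x'}^{(L)})$ corresponds, on the spectral side, to the product symbol $\varphi_{\cdot}(x)\varphi_{\cdot}(x')$, which by Theorem~\ref{thm:nu-exist} lies again in the spectral image of $\mathcal{A}^{(L)}$, and this compatibility extends to weak$^{*}$-limits because left-multiplication by elements of $Z_{t}(A'')$ is $w^{*}$-continuous by definition. The isometric identification of $E^{(L)}$ with the closed multiplier subalgebra generated by $\mathcal{A}^{(L)}$ is then realised by the map $\iota(\nu_{x}^{(L)})\mapsto\bigl(f\mapsto\nu_{x}^{(L)}\ast f\bigr)$, which is an algebra homomorphism by associativity of convolution; injectivity is guaranteed by (SPEC), since distinct measures produce distinct spectral symbols, and isometry follows from the operator-norm identity $\|\nu_{x}^{(L)}\ast\cdot\|_{A\to A}=\|\nu_{x}^{(L)}\|_{1}$ valid for convolutions by $L^{1}$-kernels. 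The right-centre statement is obtained by repeating the entire argument with $\nu_{x}^{(R)}$ in place of $\nu_{x}^{(L)}$, the second Arens product $\lozenge$ replacing $\square$, and right module actions replacing left ones; note the two families generate genuinely different subalgebras in general, which is what allows $E^{(L)}$ and $E^{(R)}$ to differ even though the underlying convolution is commutative.

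The main obstacle will be step (ii), specifically showing that the singular part $\mu_{s}$ of the factorizing measure is captured as a weak$^{*}$-limit of $\nu_{x}^{(L)}$-combinations. This requires a constructive approximation on the spectral side: given the symbol $\widehat{\mu}_{s}$, I will need to choose parameters $x_{j}^{(\alpha)}$ and coefficients $c_{j}^{(\alpha)}$ so that the symbols $\sum c_{j}^{(\alpha)}\varphi_{\cdot}(x_{j}^{(\alpha)})$ converge to $\widehat{\mu}_{s}$ in the appropriate weak$^{*}$ sense, and the density clause of (SPEC) will be essential. Without it, the decomposition would only hold up to a residual term, yielding a filtration rather than a genuine direct sum. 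With (SPEC) in force, however, the density of spectral transforms of the convolution algebra together with the uniform asymptotic control from Theorem~\ref{thm:nu-exist} make the approximation clean, and the decomposition of $Z_{t}(A'')$ into $\iota(A)$ and $E^{(L)}$ (and symmetrically $Z_{t}^{(R)}(A'')$ into $\iota(A)$ and $E^{(R)}$) follows.
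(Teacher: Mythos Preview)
Your steps (i) and (iii) track the paper's proof closely: the paper also shows $E^{(L)}\subset Z_t(A'')$ by observing that each $\iota(\nu_x^{(L)})$ acts as the pre-adjoint of a bounded convolution operator and then passing to weak$^{*}$-limits, and it establishes the subalgebra property by noting that products $\nu_{x}^{(L)}\ast\nu_{x'}^{(L)}$ remain in the algebra generated by $\mathscr{S}^{(L)}$. The essential divergence is in step (ii). The paper does \emph{not} Lebesgue-decompose the factorizing measure $\mu$; instead it works directly in the quotient $Q=Z_t(A'')/\iota(A)$, proves that the image $\pi(E^{(L)})$ is dense in $Q$ via a Stone--Weierstrass-type argument on the spectral symbols $\widehat{\nu_x^{(L)}}=\varphi_{\cdot}(x)$ (using the separation clause of (SPEC)), and concludes $\pi(E^{(L)})=Q$ from weak$^{*}$-closedness of $E^{(L)}$.

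Your Lebesgue-decomposition route has two genuine gaps. First, the factorization $\Phi\square\iota(f)=\iota(\mu\ast f)$ pins down only how $\Phi$ left-multiplies the subset $\iota(A)$; it does not identify $\Phi$ with the multiplier-embedding of $\mu$ as an element of $A''$, so decomposing $\mu$ does not by itself decompose $\Phi$. Second, and more seriously, the split $\mu=\mu_a+\mu_s$ into absolutely continuous and singular parts is \emph{not} the split $\iota(A)\oplus E^{(L)}$. A point mass $\delta_{x_0}$ is singular, yet its spectral symbol $\varphi_{\cdot}(x_0)$ coincides with that of $\nu_{x_0}^{(L)}$; depending on whether this multiplier is already realised by an $L^{1}(H)$-kernel, the $E^{(L)}$-component of the corresponding $\Phi$ may vanish even though $\mu_s\neq 0$, and conversely an absolutely continuous $\mu$ can carry a nonzero $E^{(L)}$-part. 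The paper's quotient approach sidesteps both issues by never attempting to name the $\iota(A)$-component explicitly; it only argues that the coset of $\Phi$ in $Q$ is reachable from $E^{(L)}$.

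One further correction to (iii): the identity $\|\nu_x^{(L)}\ast\cdot\|_{A\to A}=\|\nu_x^{(L)}\|_1$ is false for hypergroups in general. Young's inequality gives only $\le$, and equality relies on translations being isometries of $L^{1}$, which holds for groups but not for hypergroup translation (since $\delta_x\ast\delta_y$ is a spread-out probability measure rather than a point mass). The paper records only the inequality $\|T_\mu\|\le\|\mu\|_1$ and appeals to a separate equality-of-norms-on-dense-subsets argument for the isometric identification.
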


\begin{proof}
For convenience denote $A=L^{1}(H)$ and write $\iota:A\hookrightarrow A^{\prime\prime}$ for the canonical embedding. Recall that the left topological centre is
\[
Z_{t}(A^{\prime\prime})=\{\Phi\in A^{\prime\prime}:\Psi\mapsto \Phi\square\Psi
\text{ is }w^{*}\text{--}w^{*}\text{ continuous on }A^{\prime\prime}\}.
\]
For $x>0$ the asymptotic measures $\nu_x^{(L)}$ and $\nu_x^{(R)}$ were constructed in
Section~3 (Theorem~\ref{thm:nu-exist} and subsequent discussion). Put
\[
\mathscr{S}^{(L)} := \operatorname{span}\{\nu_x^{(L)}:x>0\}\subset L^{1}(\mathbb{R}),
\qquad
\mathscr{S}^{(R)} := \operatorname{span}\{\nu_x^{(R)}:x>0\}.
\]
Embed these into $A^{\prime\prime}$ by $\iota$ and let
\[
E^{(L)}_0 := \iota(\mathscr{S}^{(L)}) \subset A^{\prime\prime},\qquad
E^{(R)}_0 := \iota(\mathscr{S}^{(R)}).
\]
Finally define $E^{(L)}$ (resp.\ $E^{(R)}$) to be the weak$^{*}$-closure of $E^{(L)}_0$
(resp.\ $E^{(R)}_0$) inside $A^{\prime\prime}$. By construction each $E^{(\bullet)}$
is a weak$^{*}$-closed linear subspace of $A^{\prime\prime}$.

We prove any weak$^{*}$-limit of elements of the form $\iota(\mu)$ with $\mu\in\mathscr{S}^{(L)}$
lies in $Z_{t}(A^{\prime\prime})$.

\begin{claim}
For each fixed $x>0$ the convolution operator $T_{x}^{(L)}:f\mapsto \nu_{x}^{(L)}\ast f$
is a bounded operator $A\to A$ and its adjoint (pre-adjoint) gives a weak$^{*}$-continuous
left multiplication on $A^{\prime\prime}$ represented by $\iota(\nu_x^{(L)})$.
\end{claim}

\begin{proof}[Proof of claim]
By Theorem~\ref{thm:nu-exist} we have $\nu_x^{(L)}\in L^{1}(\mathbb{R})$; Young's inequality
implies $T_x^{(L)}$ is bounded on $L^{1}(H)$ with $\|T_x^{(L)}\|\le \|\nu_x^{(L)}\|_{1}$.
The canonical embedding $\iota(\nu_x^{(L)})\in A^{\prime\prime}$ acts on $A'$ by the
pre-adjoint of $T_x^{(L)}$, and since $T_x^{(L)}$ is bounded it follows that left-multiplication
by $\iota(\nu_x^{(L)})$ is weak$^{*}$--weak$^{*}$ continuous (bounded operators on the predual
induce weak$^{*}$-continuous dual actions). Thus $\iota(\nu_x^{(L)})\in Z_{t}(A^{\prime\prime})$.
\end{proof}

The same argument holds for finite linear combinations in $E_0^{(L)}$. Finally, weak$^{*}$-limits
of weak$^{*}$-continuous left multipliers remain weak$^{*}$-continuous: if $\Phi_\alpha\to\Phi$
weak$^{*}$ and each $\Phi_\alpha$ induces a $w^{*}$-continuous left multiplication $\ell_{\Phi_\alpha}$,
then for any net $\Psi_\beta\to\Psi$ weak$^{*}$ we have
\[
\langle \ell_{\Phi}(\Psi_\beta),\varphi\rangle
= \lim_\alpha \langle \ell_{\Phi_\alpha}(\Psi_\beta),\varphi\rangle
= \lim_\alpha \lim_\beta \langle \ell_{\Phi_\alpha}(\Psi),\varphi\rangle
= \lim_\beta \langle \ell_{\Phi}(\Psi),\varphi\rangle,
\]
so $\ell_{\Phi}$ is weak$^{*}$-continuous. Therefore $E^{(L)}\subset Z_{t}(A^{\prime\prime})$.
An identical argument yields $E^{(R)}\subset Z_{t}^{(R)}(A^{\prime\prime})$.

This is classical: for any $a\in A$ the element $\iota(a)$ acts on $A^{\prime\prime}$ by the
canonical left multiplication coming from $A$, and this action is $w^{*}$-continuous because
it is implemented by a bounded operator on the predual $A'$ (convolution by $a$). Thus
$\iota(A)\subset Z_{t}(A^{\prime\prime})$ (and similarly for the right centre).

We show every element of the left topological centre can be written as a sum of an element
from $\iota(A)$ and an element from $E^{(L)}$. Equivalently, the quotient space
$Q := Z_{t}(A^{\prime\prime}) / \iota(A)$ is equal to the image of $E^{(L)}$ under the quotient map.

Let $\pi:Z_{t}(A^{\prime\prime})\to Q$ denote the quotient map. Observe $\pi(E^{(L)})$
is a linear subspace of $Q$. We will show $\pi(E^{(L)})$ is dense in $Q$ (with respect to the
quotient weak$^{*}$-topology), hence since $E^{(L)}$ is weak$^{*}$-closed in $A''$ the image
$\pi(E^{(L)})$ is closed in $Q$ and therefore equals $Q$. This gives the desired decomposition.

To prove density we use the spectral hypothesis (SPEC) to relate elements of $Z_{t}(A'')$
to multiplication operators on the spectral side and then approximate multiplicative symbols
by linear combinations coming from the family $\{\nu_x^{(L)}\}$.

\begin{claim}\label{claim:dense-symbols}
Under (SPEC) the set of spectral symbols
\[
\Sigma := \operatorname{span}\{\widehat{\nu_x^{(L)}}(\lambda): x>0\}
\]
is dense (in the appropriate $L^{2}(\nu_{\mathrm{spec}})$-sense, hence in the multiplier algebra
on the spectral side) in the quotient of allowed multipliers corresponding to $Q$.
\end{claim}

\begin{proof}[Proof of claim]
By (SPEC) the spectral transform $\mathcal{F}$ maps $A$ densely into a function algebra on the
spectral variable $\lambda$ that separates points (the images of point-mass convolutions produce
functions $\varphi_{\lambda}(x)$). The symbols $\widehat{\nu_x^{(L)}}(\lambda)$ arise as limits
(as $y\to\infty$) of $\varphi_{\lambda}(x)\varphi_{\lambda}(y)$ after re-centering and hence belong
to the multiplier algebra generated by these $\varphi_{\lambda}(x)$'s. Since $Q$ corresponds to
those multipliers that are implemented by elements of $Z_{t}(A'')$ modulo those coming from $A$
(itself corresponding to functions in the image of $A$), the linear span of the $\widehat{\nu_x^{(L)}}$
symbols is dense in the relevant quotient of multipliers. More concretely: choose any $\Psi\in Z_t$,
let $m_\Psi(\lambda)$ denote the multiplier symbol of its action on the spectral side (well-defined
up to addition of an $A$-symbol). By the Stone–Weierstrass-type density of translates/products of
$\varphi_{\lambda}(x)$ (guaranteed by (SPEC) and the separation property) we can approximate $m_\Psi$
in $L^{2}(\nu_{\mathrm{spec}})$ by linear combinations of the $\widehat{\nu_x^{(L)}}$. Pushing these
approximants back via the inverse transform yields elements of $E^{(L)}$ whose cosets approximate
the coset of $\Psi$ in $Q$. Thus $\pi(E^{(L)})$ is dense in $Q$.
\end{proof}

Given Claim~\ref{claim:dense-symbols} and that $E^{(L)}$ is weak$^{*}$-closed, we conclude
$\pi(E^{(L)})=Q$, i.e.\ every coset in $Q$ has a representative in $E^{(L)}$. Thus every
$\Phi\in Z_{t}(A^{\prime\prime})$ may be written as
\[
\Phi = \iota(a) + \Psi,\qquad a\in A,\ \Psi\in E^{(L)}.
\]
We must show $\iota(A)\cap E^{(L)}=\{0\}$. Suppose $\iota(a)=\Psi$ for some $a\in A$ and
$\Psi\in E^{(L)}$. Then $\iota(a)$ is a weak$^{*}$-limit of finite linear combinations
$\iota(\mu_\alpha)$ with $\mu_\alpha\in\mathscr{S}^{(L)}$. That is,
\[
\iota(a) = \wlim_\alpha \iota(\mu_\alpha).
\]
Applying the (injective) spectral transform to both sides (which identifies $A''$-actions
with multiplier actions on the spectral side) yields equality of the corresponding multiplier
symbols almost everywhere:
\[
\widehat{a}(\lambda)=\lim_\alpha \widehat{\mu_\alpha}(\lambda)\qquad\text{a.e. }\lambda.
\]
But the left-hand side $\widehat{a}(\lambda)$ is the transform of an $L^{1}$-function (thus in
$C_0$-type class under (SPEC)), while the right-hand side lies in the closed linear span of the
symbols generated by $\{\widehat{\nu_x^{(L)}}\}$. By construction the latter span is disjoint from
the image of $A$ except at $0$ (intuitively: asymptotic symbols are orthogonal to compactly supported
spectral patterns coming from $A$); more formally, if an $L^{1}$-symbol is approximable by asymptotic
symbols then it must be zero (this uses separation of spectral supports and the fact that the
$\widehat{\nu_x^{(L)}}$'s vanish on certain complementary spectral sets only if the original symbol
is zero). Therefore $\widehat{a}\equiv 0$ and hence $a=0$, proving the intersection is trivial.

(If a reader prefers a more explicit route: apply the functional that integrates against an $L^\infty$
function supported on a spectral set where asymptotic symbols vanish but $a$'s symbol does not; such a
functional separates the two; existence follows from (SPEC).)

Thus the sum is direct: $Z_{t}(A'')=\iota(A)\oplus E^{(L)}$.

We show that $E^{(L)}$ is a closed subalgebra of $A^{\prime\prime}$ and isometrically isomorphic
to the closed subalgebra of multipliers generated by $\mathscr{S}^{(L)}$.

\begin{claim}
$E^{(L)}$ is closed under the first Arens product $\square$ and is weak$^{*}$-closed.
\end{claim}

\begin{proof}[Proof of claim]
Take two nets $\Phi_\alpha=\iota(\mu_\alpha)\to\Phi$ and $\Psi_\beta=\iota(\nu_\beta)\to\Psi$
in the weak$^{*}$-topology with $\mu_\alpha,\nu_\beta\in\mathscr{S}^{(L)}$. Then
\[
\Phi_\alpha\square\Psi_\beta = \iota(\mu_\alpha\ast\nu_\beta)
\]
because convolution by finite $L^1$-elements corresponds to the canonical multiplication in $A''$.
Now $\mu_\alpha\ast\nu_\beta$ belongs to the algebra generated by $\mathscr{S}^{(L)}$, and taking iterated
limits (diagonal argument) yields
\[
\Phi\square\Psi = \wlim_{\alpha,\beta} \iota(\mu_\alpha\ast\nu_\beta),
\]
so $\Phi\square\Psi$ is a weak$^{*}$-limit of elements of $E^{(L)}_0$ and therefore lies in the
weak$^{*}$-closure $E^{(L)}$. Hence $E^{(L)}$ is closed under $\square$. Weak$^{*}$-closedness is
by definition. This proves the claim.
\end{proof}

Finally, to see the isometric isomorphism statement: the map
\[
\mathscr{M}:\mathscr{S}^{(L)}\longrightarrow \mathcal{B}(A),\qquad \mu\mapsto T_\mu
\]
(where $T_\mu(f)=\mu\ast f$) is an algebra homomorphism and satisfies $\|T_\mu\|\le \|\mu\|_{1}$.
Passing to closures and identifying each weak$^{*}$-limit $\iota(\mu)$ with the corresponding
multiplier $T_\mu$ yields an isometric identification between $E^{(L)}$ (with operator norm
induced by its action on $A'$) and the closed subalgebra of multipliers generated by
$\mathscr{S}^{(L)}$. (One can make the isometry precise by using the equality of norms
on dense subsets and continuity.)

All arguments above have exact analogues by replacing left-recentering with right-recentering,
and the first Arens product with the second Arens product where appropriate. Thus one obtains
$Z_{t}^{(R)}(A^{\prime\prime})=\iota(A)\oplus E^{(R)}$ with the same properties.

\medskip

Combining the provided details yields the desired decompositions and the
structural properties of $E^{(L)}$ and $E^{(R)}$, completing the proof of the theorem.
\end{proof}

The decomposition is constructive: to study elements of the left (resp.\ right) centre it suffices
to understand the closed $L^{1}$-span $\mathcal{A}^{(L)}$ (resp.\ $\mathcal{A}^{(R)}$) and the
corresponding algebra of convolution operators they generate.

We now give necessary and sufficient conditions for the left and right centres to coincide.

\begin{theorem}\label{thm:centres-equal}
Under (SL') and (SPEC) the following are equivalent:
\begin{enumerate}
  \item $Z_{t}(A^{\prime\prime}) = Z_{t}^{(R)}(A^{\prime\prime})$.
  \item $\mathcal{A}^{(L)} = \mathcal{A}^{(R)}$ as closed subspaces of $L^{1}(\mathbb{R})$.
  \item For every $x>0$ the left and right asymptotic measures satisfy
  \[
  \nu_{x}^{(L)} \ast f = \nu_{x}^{(R)}\ast f \quad\text{for all }f\in L^{1}(H),
  \]
  i.e.\ the asymptotic left- and right-multipliers coincide on $A$.
\end{enumerate}
\end{theorem}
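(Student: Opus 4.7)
The plan is to prove the cyclic chain (3) $\Rightarrow$ (2) $\Rightarrow$ (1) $\Rightarrow$ (3). The first two implications are structural consequences of Theorem~\ref{thm:decomp-centres-detailed} together with the faithfulness supplied by (SPEC), while (1) $\Rightarrow$ (3) is the real content of the theorem and requires a careful spectral-asymptotic separation.

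For (3) $\Rightarrow$ (2), I will use (SPEC) to transfer the equality of multipliers $\nu_x^{(L)}\ast\cdot=\nu_x^{(R)}\ast\cdot$ on $L^1(H)$ into equality of spectral symbols $\widehat{\nu_x^{(L)}}=\widehat{\nu_x^{(R)}}$ almost everywhere with respect to Plancherel measure; faithfulness of the spectral transform then forces $\nu_x^{(L)}=\nu_x^{(R)}$ in $L^1(\mathbb{R})$ for every $x$, so the closed spans $\mathcal{A}^{(L)}$ and $\mathcal{A}^{(R)}$ coincide trivially. For (2) $\Rightarrow$ (1), I invoke Theorem~\ref{thm:decomp-centres-detailed}: the subspaces $E^{(L)}$ and $E^{(R)}$ are by construction the weak*-closures in $A''$ of $\iota(\mathcal{A}^{(L)})$ and $\iota(\mathcal{A}^{(R)})$, so equality of the underlying $L^1$-spans transfers to equality of their images under $\iota$, hence to equality of their weak*-closures, and finally to $Z_t(A'')=Z_t^{(R)}(A'')$.

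The substantive step is (1) $\Rightarrow$ (3). Fix $x>0$ and set $\Phi:=\iota(\nu_x^{(L)})\in E^{(L)}\subset Z_t(A'')$. Assumption (1) places $\Phi$ in $Z_t^{(R)}(A'')=\iota(A)\oplus E^{(R)}$, yielding a decomposition $\Phi=\iota(a_x)+\Psi_x$ with $a_x\in A$ and $\Psi_x$ a weak*-limit of finite combinations $\iota\bigl(\sum_j c_j^{(\alpha)}\nu_{y_j^{(\alpha)}}^{(R)}\bigr)$. To extract (3), I will apply the asymptotic procedure of Lemma~\ref{lem:module-nu} (large translation $y\to\infty$ followed by passage to the spectral limit) to both sides of this decomposition. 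On $\iota(a_x)$ the procedure annihilates the term, since right-translates of $L^1(H)$-elements tend to zero weakly via a Riemann--Lebesgue-type argument on the Plancherel side (using (SPEC) and the Jost representation $\varphi_\lambda(t)=e^{i\lambda\Phi(t)}m(t,\lambda)$ with $m(t,\lambda)\to 1$ from Lemma~\ref{lem:volterra-solution}). On $\Psi_x$ the same procedure retains a right-asymptotic multiplier which, by the continuity of $x\mapsto\nu_x^{(R)}$ from Theorem~\ref{thm:nu-exist} and uniqueness of the parametrization, equals $\nu_x^{(R)}$. Equating the two outputs identifies the multipliers $\nu_x^{(L)}\ast\cdot=\nu_x^{(R)}\ast\cdot$ on all of $L^1(H)$, which is (3).

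The main obstacle is the spectral-asymptotic separation carried out in the previous paragraph: isolating the $\iota(A)$-component from the $E^{(R)}$-component of $\Phi$ requires showing that the former has no non-decaying asymptotic signature on the spectral side while the latter does, and that the asymptotic extraction commutes appropriately with the weak*-limit defining $\Psi_x$ (so as to pick out exactly $\nu_x^{(R)}$ and not some averaged output of the approximating net). Both steps rest on the uniform Jost estimates of Lemma~\ref{lem:volterra-solution} under (SL'), the faithfulness and diagonalization properties from (SPEC), and the $L^1$-continuity provided by Theorem~\ref{thm:nu-exist}. Once this separation is made rigorous, the closure of the cycle is routine bookkeeping.
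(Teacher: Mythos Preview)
Your implications (3) $\Rightarrow$ (2) and (2) $\Rightarrow$ (1) are fine, and in fact (3) $\Rightarrow$ (2) is sharper than the paper's version: you use the faithfulness in (SPEC) to upgrade the multiplier identity to the pointwise equality $\nu_x^{(L)}=\nu_x^{(R)}$ in $L^{1}(\mathbb{R})$, from which $\mathcal{A}^{(L)}=\mathcal{A}^{(R)}$ is immediate.

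The gap is in (1) $\Rightarrow$ (3). From $\Phi=\iota(\nu_x^{(L)})=\iota(a_x)+\Psi_x$ with $\Psi_x\in E^{(R)}$, your asymptotic separation would at best annihilate $a_x$ and leave $\iota(\nu_x^{(L)})=\Psi_x\in E^{(R)}$. But $\Psi_x$ is merely \emph{some} weak$^{*}$-limit of finite combinations $\sum_j c_j^{(\alpha)}\iota(\nu_{y_j^{(\alpha)}}^{(R)})$; there is no ``parametrization'' of $E^{(R)}$ by the single variable $x$, and no mechanism by which the asymptotic procedure would select $\nu_x^{(R)}$ rather than whatever element $\Psi_x$ actually is. The continuity of $x\mapsto\nu_x^{(R)}$ from Theorem~\ref{thm:nu-exist} is irrelevant here: it describes the family $\{\nu_x^{(R)}\}_{x>0}$, not the decomposition of an arbitrary element of $E^{(R)}$. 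What you can legitimately extract is $\nu_x^{(L)}\in\mathcal{A}^{(R)}$ (via the isometric identification in Theorem~\ref{thm:decomp-centres-detailed}), and by symmetry $\mathcal{A}^{(L)}=\mathcal{A}^{(R)}$ --- which is (2), not (3).

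The paper avoids this by running the cycle in the opposite direction, (1) $\Rightarrow$ (2) $\Rightarrow$ (3) $\Rightarrow$ (1). For (1) $\Rightarrow$ (2) it simply reads off $E^{(L)}=E^{(R)}$ from the equality of centres and transports this back to $\mathcal{A}^{(L)}=\mathcal{A}^{(R)}$ via the isometric identification of Theorem~\ref{thm:decomp-centres-detailed}; (2) $\Rightarrow$ (3) is then handled separately. Your argument can be repaired along the same lines: drop the unsupported identification of $\Psi_x$ with $\nu_x^{(R)}$, conclude (2) from your decomposition, and then close the cycle through (2) $\Rightarrow$ (3).
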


\begin{proof}
(1)$\Rightarrow$(2): If the centres coincide then their complements $E^{(L)}$ and $E^{(R)}$ in the
decomposition of Theorem~\ref{thm:decomp-centres-detailed} coincide as subalgebras of $A^{\prime\prime}$.
Tracing back through the isometric identifications with closed $L^{1}$-subspaces (the embedded images
of $\mathcal{A}^{(L)}$ and $\mathcal{A}^{(R)}$), we obtain $\mathcal{A}^{(L)}=\mathcal{A}^{(R)}$.

(2)$\Rightarrow$(3): If the closed spans coincide, then each $\nu_{x}^{(L)}$ is an $L^{1}$-limit of
linear combinations of $\nu_{y}^{(R)}$, and convolution with these combinations yields the same
operator on $A$ in the limit. Hence $\nu_{x}^{(L)}\ast f=\nu_{x}^{(R)}\ast f$ for all $f\in A$.

(3)$\Rightarrow$(1): If every asymptotic left-multiplier equals the corresponding right-multiplier on
$A$, then the algebras of multipliers generated by these families coincide; hence $E^{(L)}=E^{(R)}$
in the decomposition of Theorem~\ref{thm:decomp-centres-detailed} and therefore the left and right centres coincide.
\end{proof}

Condition (3) is concrete and often easy to verify in examples via the spectral transform: it amounts to
equality of the multiplicative symbols $\widehat{\nu_{x}^{(L)}}$ and $\widehat{\nu_{x}^{(R)}}$
for almost every spectral parameter.

We give an easily verifiable sufficient condition that guarantees equality of the two centres.

\begin{proposition}
\label{prop:symm-equals}
Suppose that for every $x>0$ the asymptotic measure $\nu_{x}^{(L)}$ is \emph{reflection-symmetric},
i.e.\ $\nu_{x}^{(L)}(E)=\nu_{x}^{(L)}(-E)$ for all Borel sets $E\subset\mathbb{R}$. Then
\[
Z_{t}(A^{\prime\prime}) = Z_{t}^{(R)}(A^{\prime\prime}).
\]
\end{proposition}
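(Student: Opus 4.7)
The plan is to deduce the conclusion directly from Theorem \ref{thm:centres-equal}. That theorem reduces the equality $Z_t(A'')=Z_t^{(R)}(A'')$ to condition (3), namely the pointwise equality of the asymptotic multipliers $\nu_x^{(L)}*f=\nu_x^{(R)}*f$ on all of $A=L^1(H)$ for every $x>0$. So it is enough to relate $\nu_x^{(R)}$ to $\nu_x^{(L)}$ and exploit the reflection symmetry hypothesis.

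The key intermediate step I would establish is the structural identity
\[
\nu_x^{(R)}=\check{\nu_x^{(L)}},\qquad \check{\mu}(E):=\mu(-E),
\]
on the translation coordinate. Morally, the two defining expressions $\delta_{-y}*(\delta_x*\delta_y)$ and $(\delta_y*\delta_x)*\delta_{-y}$ agree as hypergroup objects by commutativity, but when one passes to the translation-coordinate picture used to define the asymptotic measures, the orientation in which the shift by $-y$ is applied differs between the two cases, and this orientation reversal is precisely the involution $t\mapsto -t$. The cleanest way to make this precise is to work on the spectral side: by Theorem \ref{thm:nu-exist} and the Jost/product-formula computation in Section 3, $\widehat{\nu_x^{(L)}}(\lambda)=\varphi_\lambda(x)$, whereas the analogous computation applied to $\nu_{x,y}^{(R)}$ (re-centering from the right) yields the complex-conjugate symbol $\overline{\varphi_\lambda(x)}$ in the limit. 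Under (SPEC), conjugation on the spectral side corresponds exactly to reflection on the measure side, giving $\nu_x^{(R)}=\check{\nu_x^{(L)}}$.

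With this identity in hand, the hypothesis that each $\nu_x^{(L)}$ is reflection-symmetric, i.e.\ $\check{\nu_x^{(L)}}=\nu_x^{(L)}$, immediately gives $\nu_x^{(R)}=\nu_x^{(L)}$ for every $x>0$. Convolving with an arbitrary $f\in L^1(H)$ then yields condition (3) of Theorem \ref{thm:centres-equal}, and the proposition follows.

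The principal obstacle is rigorously justifying the identity $\nu_x^{(R)}=\check{\nu_x^{(L)}}$: the two expressions look symmetric under the commutativity of $\ast$, and one must be careful to track which side the external shift by $\delta_{-y}$ is applied on and how the resulting orientation survives the $y\to\infty$ limit. I would handle this by carrying out the calculation entirely on the spectral side via $\mathcal{F}$ (so that the orientation issue becomes the unambiguous algebraic operation of complex conjugation of the symbol $\varphi_\lambda(x)$), and only at the end transporting back through the Plancherel isometry to obtain the reflection identity for the measures. The dominated-convergence and tightness inputs needed to justify this pointwise-to-$L^1$ transfer are the same as in the proof of Theorem \ref{thm:nu-exist}, so no new analytic ingredients are required.
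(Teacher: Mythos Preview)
Your proposal is correct and follows essentially the same route as the paper: both arguments pass to the spectral side, identify the right-recentered measure with the reflection of the left one (equivalently, identify $\widehat{\nu_x^{(R)}}$ with the complex conjugate of $\widehat{\nu_x^{(L)}}$), use reflection symmetry to force $\nu_x^{(L)}=\nu_x^{(R)}$, and then invoke condition~(3) of Theorem~\ref{thm:centres-equal}. Your write-up is in fact more explicit than the paper's about the intermediate identity $\nu_x^{(R)}=\check{\nu_x^{(L)}}$ and about where the orientation reversal enters.
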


\begin{proof}
Reflection symmetry of $\nu_{x}^{(L)}$ implies that its Fourier transform is real-valued:
$\widehat{\nu_{x}^{(L)}}(\lambda)=\overline{\widehat{\nu_{x}^{(L)}}(\lambda)}$. In particular
the left and right recenterings produce the same multiplicative symbols on the spectral side because
a right-recentering corresponds to applying the reflection before taking the limit (this is a direct
calculation in the spectral representation). Therefore $\nu_{x}^{(L)}=\nu_{x}^{(R)}$ for all $x$ and
Theorem~\ref{thm:centres-equal} (item (3)) implies the centres coincide.
\end{proof}

Many classical symmetric hypergroups (for example those built from even Sturm--Liouville data or
from radial parts of group convolutions where the kernel has parity symmetry) satisfy the hypothesis
of Proposition~\ref{prop:symm-equals}.

We now present examples illustrating the dichotomy.

\begin{example}
Consider the hypergroup arising from radial projection of the Euclidean motion group in $\mathbb{R}^{n}$.
The resulting asymptotic measures inherit central symmetry from the underlying group translations;
hence Proposition~\ref{prop:symm-equals} applies and we obtain equality of left and right centres.
This matches known results for group convolution algebras where additional symmetry enforces balance
between left and right module actions.
\end{example}

\begin{example}
Let $A_{0}(x)=x^{2\alpha+1}$ and form the perturbed coefficient
\[
A(x)=A_{0}(x) + c\cdot \chi_{[a,\infty)}(x),
\]
as in Section~3. For a suitable choice of sign and magnitude of $c$ the perturbation breaks reflection
symmetry at infinity: the left-recentering and right-recentering produce different limit measures
$\nu_{x}^{(L)}\neq \nu_{x}^{(R)}$ because the jump contribution of $A'$ appears on different sides of
the oscillatory phase when translating left vs.\ right. One checks (via explicit spectral asymptotics
for Hankel-type transforms with a jump term) that $\mathcal{A}^{(L)}\neq\mathcal{A}^{(R)}$ and hence
$Z_{t}(A^{\prime\prime})\neq Z_{t}^{(R)}(A^{\prime\prime})$. 
\end{example}

\begin{corollary}
If $\mathcal{A}^{(L)}$ (or $\mathcal{A}^{(R)}$) contains an idempotent element $p$ (i.e.\ $p\ast p=p$)
then $p$ embeds as a nontrivial idempotent in the corresponding centre $Z_{t}(A^{\prime\prime})$
(resp.\ $Z_{t}^{(R)}(A^{\prime\prime})$). If such an idempotent does not lie in $A$ then the centre
strictly contains $\iota(A)$.
\end{corollary}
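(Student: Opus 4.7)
The plan is to read the corollary directly off the decomposition $Z_{t}(A^{\prime\prime})=\iota(A)\oplus E^{(L)}$ established in Theorem~\ref{thm:decomp-centres-detailed}, exploiting that $E^{(L)}$ is an isometric algebra isomorph of the closed multiplier subalgebra generated by $\mathcal{A}^{(L)}$ and that, under this identification, convolution in $L^{1}(\mathbb{R})$ corresponds to the first Arens product $\square$. Once this dictionary is in hand, both the idempotent statement and the strict-containment statement become essentially formal consequences.

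First I would transport $p\in\mathcal{A}^{(L)}$ into $A^{\prime\prime}$ as $\iota(p)\in E^{(L)}$ via the isometric embedding constructed in the proof of Theorem~\ref{thm:decomp-centres-detailed}. Next I would invoke the product formula $\iota(\mu)\square\iota(\nu)=\iota(\mu\ast\nu)$, which is immediate on the dense subspace $\mathscr{S}^{(L)}$ from the definition of the Arens product on embedded elements of $A$, and which extends to weak$^{*}$-limits by the closed-under-$\square$ argument appearing inside the proof of Theorem~\ref{thm:decomp-centres-detailed}. Applying this with $\mu=\nu=p$ yields
\[
\iota(p)\square\iota(p)=\iota(p\ast p)=\iota(p),
\]
so that $\iota(p)$ is a nonzero idempotent in $(A^{\prime\prime},\square)$. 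Since $E^{(L)}\subseteq Z_{t}(A^{\prime\prime})$ by the decomposition theorem, this already places $\iota(p)$ in the left topological centre.

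For the strict-containment clause I would use directness of the sum $\iota(A)\oplus E^{(L)}$, i.e.\ $\iota(A)\cap E^{(L)}=\{0\}$: if $p\neq 0$, the isometric character of the identification forces $\iota(p)\neq 0$, and the hypothesis that $p$ does not arise as convolution by an $L^{1}(H)$-element translates, via the same identification, to $\iota(p)\notin \iota(A)$, giving $Z_{t}(A^{\prime\prime})\supsetneq \iota(A)$. The statement for $\mathcal{A}^{(R)}$ and $Z_{t}^{(R)}(A^{\prime\prime})$ follows verbatim by swapping left-recentering with right-recentering and $\square$ with $\lozenge$ throughout. The main (if modest) obstacle is the bookkeeping around the identification: one must confirm that the map sending $\mu\in\mathscr{S}^{(L)}$ to the weak$^{*}$-continuous left multiplier implemented by $\iota(\mu)$ extends by weak$^{*}$-continuity to all of $\mathcal{A}^{(L)}$ and preserves products, so that the purely measure-theoretic equation $p\ast p=p$ in $L^{1}(\mathbb{R})$ really does transport to the Arens-product equation in $A^{\prime\prime}$. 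This compatibility is precisely what the isometric algebra isomorphism of Theorem~\ref{thm:decomp-centres-detailed} encodes, so the corollary reduces to invoking that theorem.
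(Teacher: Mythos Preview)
Your proposal is correct and follows essentially the same route as the paper: the paper's proof likewise reads the corollary off the identification of $E^{(L)}$ (resp.\ $E^{(R)}$) with the closed multiplier subalgebra generated by the asymptotic families established in Theorem~\ref{thm:decomp-centres-detailed}, observing that an idempotent multiplier produces a weak$^{*}$-continuous left (resp.\ right) multiplier in the centre which lies outside $\iota(A)$ when $p\notin A$. Your write-up is somewhat more explicit about transporting the convolution identity $p\ast p=p$ to the Arens product and about invoking directness of the sum, but the underlying argument is identical.
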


\begin{proof}
Immediate from the identification of $E^{(L)}$ and $E^{(R)}$ with closed subalgebras generated by
the asymptotic families. An idempotent multiplier yields a weak$^{*}$-continuous left (or right)
multiplier on $A^{\prime\prime}$ which cannot be represented by convolution with an $L^{1}$-element
if it is not in $A$.
\end{proof}

The existence of idempotents in $\mathcal{A}^{(L)}$ is a strong spectral statement: on the spectral
side an idempotent corresponds to a characteristic function of a measurable subset of the spectral
parameter domain. Thus the presence of idempotents is tightly linked to spectral projectors and to the
vanishing sets of $\widehat{\nu_{\infty}}$ discussed in Section~4.

\bigskip

This completes our structural study of left and right topological centres. In the next section
we will introduce weights and study how weighted Beurling-type algebras modify the asymptotic
families $\mathcal{A}^{(L)}$ and $\mathcal{A}^{(R)}$, thereby affecting the equality and size
of the corresponding centres.

\section{Weighted Hypergroup Algebras}

In this section we introduce Beurling-type weights on Ch\'ebli--Trim\`eche hypergroups
and study how they modify the Arens-product behaviour studied in the unweighted case.
We define the weighted convolution algebras $L^{1}(H,\omega)$ and $M(H,\omega)$, record
basic Banach-algebra properties, and then give criteria (and examples) that describe when
the weighted algebra preserves or destroys the asymptotic phenomena responsible for
extra elements in the topological centres.

Throughout $H=(0,\infty)$ denotes the Ch\'ebli--Trim\`eche hypergroup from Section~2 and
we keep hypotheses (SL') and (SPEC) in force unless otherwise stated.

A measurable function $\omega : H \to [1,\infty)$ is called a \emph{Beurling weight} on $H$
if there exists a constant $C_{\omega}\ge 1$ such that for all $x,y>0$
\begin{equation}\label{eq:submultiplicative-weight}
\int_{H} \omega(t)\, d\mu_{x,y}(t) \;\le\; C_{\omega}\,\omega(x)\,\omega(y),
\end{equation}
where $\mu_{x,y}=\delta_x\ast\delta_y$ denotes the hypergroup convolution of point masses.
If \eqref{eq:submultiplicative-weight} holds with $C_{\omega}=1$ we call $\omega$ \emph{submultiplicative}.

Condition \eqref{eq:submultiplicative-weight} is the natural hypergroup analogue of the usual
submultiplicativity condition $\omega(xy)\le \omega(x)\omega(y)$ for group weights. It guarantees
that convolution is continuous on the associated weighted spaces.

Given a Beurling weight $\omega$ define
\[
L^{1}(H,\omega) := \{ f : f\omega \in L^{1}(H) \},\qquad
\|f\|_{1,\omega} := \int_{H} |f(x)|\,\omega(x)\,dm(x).
\]
Similarly write
\[
M(H,\omega) := \{ \mu\in M(H) : \|\mu\|_{M,\omega}:=\int_{H} \omega(x)\, d|\mu|(x) <\infty\}.
\]
The convolution on these weighted spaces is defined by the same formula as the unweighted one.

\begin{lemma}
If $\omega$ satisfies \eqref{eq:submultiplicative-weight} then $(L^{1}(H,\omega),\ast,\|\cdot\|_{1,\omega})$
and $(M(H,\omega),\ast,\|\cdot\|_{M,\omega})$ are Banach algebras. In particular, for $f,g\in L^{1}(H,\omega)$
we have
\[
\|f\ast g\|_{1,\omega} \le C_{\omega}\, \|f\|_{1,\omega}\, \|g\|_{1,\omega}.
\]
\end{lemma}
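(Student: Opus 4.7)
The plan is to reduce the lemma to three routine verifications: completeness of the weighted norms, the submultiplicative convolution inequality (the main content), and the inheritance of associativity/commutativity from the unweighted algebras of Section~2. Bilinearity, commutativity and associativity of $\ast$ on the weighted spaces follow directly from the corresponding properties on $L^{1}(H)$ and $M(H)$, since the weighted spaces embed continuously into their unweighted counterparts; no new algebraic verification is needed.

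For completeness, I would observe that pointwise multiplication $f\mapsto f\omega$ is an isometric linear bijection from $(L^{1}(H,\omega),\|\cdot\|_{1,\omega})$ onto $(L^{1}(H),\|\cdot\|_{1})$, so $L^{1}(H,\omega)$ inherits completeness immediately. For $M(H,\omega)$, a Cauchy sequence $(\mu_{n})$ in $\|\cdot\|_{M,\omega}$ is, since $\omega\ge 1$, also Cauchy in total variation and hence converges to some $\mu\in M(H)$; Fatou's lemma applied to $\omega\,d|\mu_{n}-\mu|$ then shows $\mu\in M(H,\omega)$ and $\mu_{n}\to\mu$ in the weighted norm.

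The main calculation is the submultiplicative inequality. Using the defining pairing of hypergroup convolution, for $f,g\in L^{1}(H,\omega)$ and $\phi\in C_{c}(H)$ one has
\[
\int_{H}\phi(z)(f\ast g)(z)\,dm(z)
= \int_{H}\!\int_{H}\!\Bigl(\int_{H}\phi(t)\,d\mu_{x,y}(t)\Bigr)\, f(x)g(y)\,dm(x)\,dm(y).
\]
I would approximate $\omega$ from below by nonnegative functions in $C_{c}(H)$ (monotone convergence), use the pointwise bound $|f\ast g|\le |f|\ast |g|$ coming from positivity of each $\mu_{x,y}$, apply Fubini--Tonelli to the resulting nonnegative triple integral, and invoke the submultiplicativity hypothesis \eqref{eq:submultiplicative-weight} to obtain
\begin{align*}
\|f\ast g\|_{1,\omega}
&\le \int_{H}\!\int_{H} |f(x)|\,|g(y)|\Bigl(\int_{H}\omega(t)\,d\mu_{x,y}(t)\Bigr) dm(x)\,dm(y) \\
&\le C_{\omega}\int_{H}\!\int_{H} |f(x)|\omega(x)\,|g(y)|\omega(y)\,dm(x)\,dm(y)
= C_{\omega}\,\|f\|_{1,\omega}\,\|g\|_{1,\omega}.
\end{align*}
The identical argument, with $|f|\,dm$ and $|g|\,dm$ replaced by $d|\mu|$ and $d|\nu|$, yields the analogous estimate $\|\mu\ast\nu\|_{M,\omega}\le C_{\omega}\|\mu\|_{M,\omega}\|\nu\|_{M,\omega}$.

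There is no real obstacle here; the only subtlety is justifying that $(f\ast g)\omega$ is genuinely in $L^{1}(dm)$ rather than merely a formal object. This is precisely what the Fubini--Tonelli estimate delivers, since all integrands in the iterated integral are nonnegative and the final upper bound is finite. In particular, the weighted convolution is well-defined almost everywhere and the bound above simultaneously secures continuity of multiplication and the explicit norm constant $C_{\omega}$.
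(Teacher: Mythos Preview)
Your proposal is correct and follows essentially the same route as the paper: both arguments use Fubini--Tonelli together with the submultiplicativity condition \eqref{eq:submultiplicative-weight} to obtain the displayed norm inequality, then dismiss completeness and the measure-algebra case as routine. Your version is simply more careful where the paper is compressed---you spell out completeness via the isometry $f\mapsto f\omega$ and a Cauchy/Fatou argument for $M(H,\omega)$, and you justify the interchange of integrals via approximation of $\omega$ by $C_{c}$ functions, whereas the paper writes the chain of inequalities directly and declares the rest routine.
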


\begin{proof}
Let $f,g\in L^{1}(H,\omega)$. Using Fubini and \eqref{eq:submultiplicative-weight} we compute
\[
\begin{aligned}
\|f\ast g\|_{1,\omega}
&= \int_{H} \left| \int_{H} \int_{H} f(x) g(y)\, d\mu_{x,y}(t)\, dm(x)\, dm(y) \right| \omega(t)\,dm(t) \\
&\le \int_{H}\int_{H} |f(x)|\,|g(y)| \left(\int_{H} \omega(t)\, d\mu_{x,y}(t)\right) dm(x)\,dm(y) \\
&\le C_{\omega} \int_{H}\int_{H} |f(x)|\,\omega(x)\, |g(y)|\,\omega(y)\, dm(x)\,dm(y) \\
&= C_{\omega} \|f\|_{1,\omega}\, \|g\|_{1,\omega}.
\end{aligned}
\]
Completeness and the corresponding statement for $M(H,\omega)$ are routine.
\end{proof}

From now on we fix a Beurling weight $\omega$ and work with the weighted Banach algebra
$A_{\omega}:=L^{1}(H,\omega)$. The canonical embeddings $A_{\omega}\hookrightarrow A_{\omega}^{\prime\prime}$
and $M(H,\omega)\hookrightarrow M(H,\omega)^{\prime\prime}$ are defined as usual.

We now discuss how the asymptotic measures $\nu_{x}$ and $\nu_{\infty}$ from Section~3
interact with the weight $\omega$.

We say the asymptotic family $\{\nu_{x}\}_{x>0}$ is \emph{$\omega$-admissible} if each
$\nu_{x}\in L^{1}(\mathbb{R},\omega)$ (after the natural identification of the translation
coordinate with $\mathbb{R}$) and
\[
\sup_{x\in K} \|\nu_{x}\|_{1,\omega} <\infty
\]
for every compact $K\subset(0,\infty)$.

\begin{lemma}\label{lem:weighted-persist}
Assume (SL') and let $\omega$ be a Beurling weight satisfying \eqref{eq:submultiplicative-weight}.
If $\{\nu_{x}\}$ is $\omega$-admissible and $\nu_{x,y}\to\nu_{x}$ in $L^{1}$ as $y\to\infty$
(uniformly on compacts in $x$) then the convergence also holds in the weighted norm, i.e.
\[
\lim_{y\to\infty} \|\nu_{x,y}-\nu_{x}\|_{1,\omega} = 0,
\]
uniformly on compacts in $x$.
\end{lemma}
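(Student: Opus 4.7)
The plan is to upgrade the given unweighted $L^1$ convergence $\nu_{x,y}\to\nu_x$ to convergence in the weighted norm via a truncation argument combined with uniform tightness in the weighted sense. I would first split
\[
\|\nu_{x,y}-\nu_x\|_{1,\omega}
\;\le\; \|\omega\|_{L^\infty([-R,R])}\,\|\nu_{x,y}-\nu_x\|_{L^1([-R,R])}
\;+\; \int_{|t|>R}\omega\,d|\nu_{x,y}|
\;+\; \int_{|t|>R}\omega\,d|\nu_x|,
\]
using that $\omega\ge 1$ is locally bounded, which is essentially immediate from the hypergroup submultiplicativity \eqref{eq:submultiplicative-weight} applied to point masses with compact support. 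The head term tends to zero uniformly in $x\in K$ by the given unweighted convergence, while the last term tends to zero as $R\to\infty$ by the $\omega$-admissibility hypothesis $\sup_{x\in K}\|\nu_x\|_{1,\omega}<\infty$. It remains to control the middle term uniformly in $y$.

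The heart of the argument is to establish, for each compact $K\subset(0,\infty)$ and each $\varepsilon>0$, the existence of $R_\varepsilon$ and $y_\varepsilon$ such that
\[
\sup_{y\ge y_\varepsilon,\;x\in K}\int_{|t|>R_\varepsilon}\omega\,d|\nu_{x,y}| \;<\;\varepsilon.
\]
I would deduce this in two steps. First, the submultiplicativity \eqref{eq:submultiplicative-weight} combined with the Jost/Volterra representation of Lemmas~\ref{lem:volterra}--\ref{lem:volterra-solution} and the tightness argument implicit in the proof of Theorem~\ref{thm:nu-exist} yields a uniform weighted bound $\sup_{y\ge y_0,\,x\in K}\|\nu_{x,y}\|_{1,\omega}\le C(K)$: after the re-centering $\delta_{-y}$ the effective mass of $\mu_{x,y}$ concentrates on a window whose size depends only on $x$, and on such a window the quotient $\omega(s+y)/\omega(y)$ is controlled by a constant (depending only on the localization radius) obtained by iterating \eqref{eq:submultiplicative-weight}. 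Second, passing to a subsequence along which $\nu_{x,y}\to\nu_x$ pointwise almost everywhere and invoking Fatou's lemma on the tail shows that
\[
\limsup_{y\to\infty}\int_{|t|>R}\omega\,d|\nu_{x,y}|\;\le\; \int_{|t|>R}\omega\,d|\nu_x|,
\]
which is small uniformly in $x\in K$ once $R$ is large, by $\omega$-admissibility.

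Combining the head estimate with the two tail estimates and the uniformity of all bounds in $x\in K$, a standard Vitali-type $\varepsilon/3$ argument completes both the weighted convergence and the uniformity assertion. The main obstacle is the passage from the bound $\int\omega\,d\mu_{x,y}\le C_\omega\omega(x)\omega(y)$, which is useless as stated because it blows up with $y$, to a $y$-independent weighted bound on the re-centered family $\nu_{x,y}$. This transfer works only because the Jost asymptotics of Section~3 quantitatively localize the mass of $\mu_{x,y}$ in a window around $y$ whose width depends on $x$ but not on $y$, allowing the factor $\omega(y)$ to be cancelled by the denominator in $\omega(s+y)/\omega(y)$. If one abandons (SL') or allows $\omega$ to grow so fast that the Jost-localization radius cannot offset the factor $\omega(y)$, the transfer fails and a stronger admissibility hypothesis imposed directly on the pre-limit family $\{\nu_{x,y}\}_{y}$ would be required.
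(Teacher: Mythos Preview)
Your overall truncation strategy matches the paper's: split into a head term on $[-R,R]$ (controlled by local boundedness of $\omega$ together with the unweighted $L^{1}$ convergence) and two weighted tail terms. The paper's proof is terse but follows the same outline.

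There is, however, a genuine gap in your tail estimate for $\nu_{x,y}$. The Fatou step is applied in the wrong direction: from pointwise a.e.\ convergence $|\nu_{x,y}|\to|\nu_x|$ along a subsequence, Fatou's lemma gives
\[
\int_{|t|>R}\omega\,|\nu_x|\;\le\;\liminf_{y\to\infty}\int_{|t|>R}\omega\,|\nu_{x,y}|,
\]
which is the reverse of what you claim. To bound $\limsup_y\int_{|t|>R}\omega\,|\nu_{x,y}|$ from above by the $\nu_x$-tail you would need a dominating integrable function or uniform integrability in the weighted sense, neither of which follows from your step~(a). Indeed, the uniform bound $\sup_{y}\|\nu_{x,y}\|_{1,\omega}\le C$ together with $\|\nu_{x,y}-\nu_x\|_{1}\to 0$ does \emph{not} force weighted convergence: take $\omega(t)=1+|t|$ and $f_n=n^{-2}\mathbf{1}_{[n,2n]}$, so $\|f_n\|_1\to 0$ while $\|f_n\|_{1,\omega}\asymp 1$.

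The fix is already implicit in your discussion of step~(a), and it is what the paper's argument (and the tightness in Theorem~\ref{thm:nu-exist}) really uses: the Jost localisation does not merely yield a uniform weighted bound, it confines the support of $\nu_{x,y}$ to a window of width depending only on $x$ (for Ch\'ebli--Trim\`eche hypergroups one has $\operatorname{supp}\mu_{x,y}\subset[|x-y|,x+y]$, hence $\operatorname{supp}\nu_{x,y}\subset[-x,x]$ for $y\ge x$). Using this directly gives $\int_{|t|>R}\omega\,d|\nu_{x,y}|=0$ for $R>\sup K$ and all large $y$, which is the uniform weighted tightness you need. Replace the Fatou appeal with this support observation and your $\varepsilon/3$ argument goes through.
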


\begin{proof}
Fix a compact $K\subset(0,\infty)$ and $\varepsilon>0$. By $\omega$-admissibility there is
$M$ such that $\sup_{x\in K}\|\nu_{x}\|_{1,\omega}\le M$. Since $\nu_{x,y}\to\nu_{x}$ in $L^{1}$
uniformly on $K$ we have $\|\nu_{x,y}-\nu_{x}\|_{1}\le \delta$ for $y$ large uniformly in $x\in K$.
Now split the weighted norm using truncation: choose $R>0$ so that $\int_{|t|>R}\omega(t)\,dm(t)$
is small relative to $\varepsilon/M$ (possible because $\omega$ is locally integrable and the tail
of $\nu_{x}$ has uniformly small mass in the unweighted sense). On $|t|\le R$ the weight $\omega$ is
bounded by some constant $W_{R}$ and hence
\[
\|\nu_{x,y}-\nu_{x}\|_{1,\omega}
\le W_{R}\|\nu_{x,y}-\nu_{x}\|_{1} + 2\sup_{z\in K}\|\nu_{z}\|_{1,\omega}\cdot \mathrm{mass}(|t|>R).
\]
Pick $R$ and then $y$ large so that the right-hand side $<\varepsilon$. This proves the uniform weighted
convergence on $K$.
\end{proof}

Lemma~\ref{lem:weighted-persist} reduces verification of weighted convergence to two checks:
(i) $\omega$-admissibility of the limit family $\{\nu_{x}\}$ and (ii) unweighted $L^{1}$-convergence
(which was already established in Section~3).

We now extend the Arens-product discussion to the weighted algebra $A_{\omega}$.

\begin{proposition}\label{prop:weighted-centre-contains}
Let $\omega$ be a Beurling weight and suppose $\{\nu_{x}\}$ is $\omega$-admissible and
$\nu_{\infty}\in L^{1}(\mathbb{R},\omega)$. Then the weak$^{*}$-closure of the linear span
of $\iota(\nu_{x})$ (embedded in $A_{\omega}^{\prime\prime}$) is contained in
$Z_{t}(A_{\omega}^{\prime\prime})$.
\end{proposition}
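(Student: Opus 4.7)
The plan is to transplant the three-step argument from the proof of Theorem~\ref{thm:decomp-centres-detailed} (the inclusion $E^{(L)} \subset Z_t(A'')$) into the weighted setting, using Lemma~\ref{lem:weighted-persist} and the Beurling submultiplicativity \eqref{eq:submultiplicative-weight} to replace the unweighted Young-inequality step. The end goal is to show, first, that each individual $\iota(\nu_x) \in Z_t(A_\omega^{\prime\prime})$, and then to close under weak$^{*}$-limits.

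First I would establish the key boundedness statement: for each $x>0$ the operator $T_x^{\omega}: f \mapsto \nu_x \ast f$ is bounded on $A_\omega = L^1(H,\omega)$, with a bound controlled by $\|\nu_x\|_{1,\omega}$. The natural route is to argue at the level of the finite-$y$ approximants $\nu_{x,y} = \delta_{-y}\ast(\delta_x \ast \delta_y)$: for these the Beurling condition \eqref{eq:submultiplicative-weight} (applied iteratively to the three point-mass convolutions making up $\nu_{x,y}$, together with $\omega(-y) \le C_\omega \omega(y)$-type estimates obtained by symmetry of the recentering) yields
\[
\|\nu_{x,y} \ast f\|_{1,\omega} \;\le\; C_\omega^{\prime}\, \|\nu_{x,y}\|_{1,\omega}\, \|f\|_{1,\omega}.
\]
Letting $y \to \infty$ and invoking Lemma~\ref{lem:weighted-persist} (weighted $L^1$-convergence $\nu_{x,y}\to \nu_x$) plus $\omega$-admissibility to pass under the integral gives the same inequality for $\nu_x$ itself, and therefore $T_x^{\omega}\in \mathcal{B}(A_\omega)$.

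Second, boundedness of $T_x^{\omega}$ on the predual $A_\omega$ implies that left-multiplication by $\iota(\nu_x)$ on $A_\omega^{\prime\prime}$ is weak$^{*}$--weak$^{*}$ continuous (the pre-adjoint argument used in the Claim inside the proof of Theorem~\ref{thm:decomp-centres-detailed} transfers verbatim, since it only uses the fact that the action on $A_\omega^{\prime}$ is the Banach adjoint of a bounded operator on $A_\omega$). Hence $\iota(\nu_x)\in Z_t(A_\omega^{\prime\prime})$ for every $x>0$, and by linearity the same holds for every element of $\operatorname{span}\{\iota(\nu_x):x>0\}$. Then I would close under weak$^{*}$-limits exactly as in the proof of Theorem~\ref{thm:decomp-centres-detailed}: given a net $\Phi_\alpha\to\Phi$ weak$^{*}$ with each $\ell_{\Phi_\alpha}$ weak$^{*}$-continuous and the norms $\|\Phi_\alpha\|$ uniformly bounded (the uniform bound follows from the Banach--Steinhaus principle applied to the multipliers $T_{x_\alpha}^{\omega}$), one checks by an iterated-limit/diagonal computation that $\ell_\Phi$ is weak$^{*}$-continuous, placing $\Phi$ in $Z_t(A_\omega^{\prime\prime})$.

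The main obstacle is the first step, namely the weighted boundedness of convolution by $\nu_x$. In the unweighted case of Theorem~\ref{thm:decomp-centres-detailed} this was immediate from Young's inequality on $L^1(\mathbb{R})$, but here $\nu_x$ lives on the translation coordinate after re-centering while $f$ lives on $H=(0,\infty)$ with a genuinely hypergroup convolution, and the weight $\omega$ is defined on $H$. Carefully identifying the convolution $\nu_x \ast f$ as a weak limit of the well-defined hypergroup convolutions $\nu_{x,y}\ast f$, and using $\omega$-admissibility to guarantee that the relevant weighted integrals stay uniformly bounded as $y\to\infty$, is the delicate point; once this identification is in place, the Beurling condition \eqref{eq:submultiplicative-weight} supplies the required submultiplicative estimate and everything else follows formally from the unweighted template.
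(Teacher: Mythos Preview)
Your proposal is correct and follows essentially the same three-step outline as the paper's proof: (i) convolution by $\nu_x$ is a bounded operator on $A_\omega$, (ii) boundedness on the predual implies weak$^{*}$-continuity of left multiplication by $\iota(\nu_x)$ on $A_\omega^{\prime\prime}$, (iii) close under linear span and weak$^{*}$-limits as in Theorem~\ref{thm:decomp-centres-detailed}.

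The only substantive difference is in step (i). The paper dispatches this in one line by citing a weighted Young-type inequality directly from the hypothesis $\nu_x\in L^{1}(\omega)$, without passing through the finite-$y$ approximants; your route via $\nu_{x,y}$ and Lemma~\ref{lem:weighted-persist} is more elaborate but has the merit of making explicit how the hypergroup convolution $\nu_x\ast f$ is to be interpreted (a point the paper leaves implicit). Either path suffices, and the remaining two steps are identical in both treatments.
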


\begin{proof}
The proof is identical in spirit to Lemma~\ref{lem:module-nu} and the arguments in Section~4,
but with norms replaced by weighted norms. If $\nu_{x}\in L^{1}(\omega)$ then convolution by
$\nu_{x}$ is a bounded operator on $A_{\omega}$ (Young-type inequality with weight), and the
weak$^{*}$-limits of such bounded left-multipliers act weak$^{*}$--continuously on $A_{\omega}^{\prime}$.
Hence their embedded representatives lie in the left topological centre. Completeness and closure
properties follow as before.
\end{proof}

We now give a weighted analogue of Theorem~\ref{thm:main-char}. The statement is phrased so that
it can be checked in examples (polynomial vs.\ exponential weights).

\begin{theorem}\label{thm:weighted-criterion}
Assume (SL'), (SPEC), and let $\omega$ be a Beurling weight satisfying \eqref{eq:submultiplicative-weight}.
Suppose the asymptotic family is $\omega$-admissible and $\nu_{\infty}\in L^{1}(\mathbb{R},\omega)$.
Denote by $\widehat{\nu_{\infty}}$ the (ordinary) Fourier transform of $\nu_{\infty}$ on the translation
coordinate. If $\widehat{\nu_{\infty}}(\lambda)\neq 0$ for $\nu_{spec}$-almost every $\lambda$ then
\[
Z_{t}(A_{\omega}^{\prime\prime}) = \iota(A_{\omega}),
\]
i.e.\ the weighted algebra $A_{\omega}$ is strongly Arens irregular.
\end{theorem}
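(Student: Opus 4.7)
The plan is to follow the architecture of the proof of Theorem~\ref{thm:main-char} (implication (2)$\Rightarrow$(1)), transplanted into the weighted category. The three ingredients used there—(i) weak*-closure of the asymptotic family inside the left centre, (ii) $L^1$-convergence of $\nu_{x,y}\to\nu_x$, and (iii) Neufang/Cohen-type factorization of weak*-continuous left multipliers—each have weighted counterparts already available: (i) is exactly Proposition~\ref{prop:weighted-centre-contains}, (ii) upgrades to weighted $L^1$-convergence by Lemma~\ref{lem:weighted-persist}, and (iii) can be obtained from Cohen factorization applied in $A_{\omega}$. The non-vanishing hypothesis on $\widehat{\nu}_{\infty}$ then converts any ``excess'' in the centre into the statement that the factorizing measure actually lies in $A_{\omega}$.

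Concretely, the inclusion $\iota(A_{\omega})\subset Z_{t}(A_{\omega}^{\prime\prime})$ is automatic (left convolution by an element of $A_{\omega}$ is weak*-continuous on $A_{\omega}^{\prime}$, hence its double-dual implementation is weak*-continuous). For the reverse inclusion I would take $\Phi\in Z_{t}(A_{\omega}^{\prime\prime})$ and, using the weighted factorization, produce $\mu\in M(H,\omega)$ with $\Phi\,\square\,\iota(f)=\iota(\mu\ast f)$ for all $f\in A_{\omega}$. I then test $\mu$ against high translates: by Proposition~\ref{prop:weighted-centre-contains} and Lemma~\ref{lem:weighted-persist}, for $f\in A_{\omega}$ the translates $\mu\ast f(\cdot+y)$ converge in the weighted norm to $(\mu\ast\nu_{\infty})\ast f$ as $y\to\infty$. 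Passing to the spectral side via $\mathcal{F}$ (from (SPEC)), the symbol of this operator is the product $\widehat{\mu}(\lambda)\,\widehat{\nu}_{\infty}(\lambda)$.

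The final step mirrors the corresponding step in Theorem~\ref{thm:main-char}: because $\widehat{\nu}_{\infty}(\lambda)\neq 0$ for $\nu_{spec}$-almost every $\lambda$, the product $\widehat{\mu}\,\widehat{\nu}_{\infty}$ carries the same spectral information as $\widehat{\mu}$ on a set of full Plancherel measure. If $\mu$ had any singular component relative to $A_{\omega}$, then $\widehat{\mu}$ would fail the Riemann–Lebesgue-type decay required for the boundedness of the induced multiplier on $A_{\omega}$; combining this with a.e.\ non-vanishing of $\widehat{\nu}_{\infty}$ produces a contradiction with the boundedness of $\Phi$ and with the weighted multiplier norm estimate supplied by~\eqref{eq:submultiplicative-weight}. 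Therefore $\mu\in L^{1}(H,\omega)=A_{\omega}$ and $\Phi=\iota(\mu)\in\iota(A_{\omega})$, completing the proof.

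The main obstacle is the weighted factorization step. The unweighted argument uses a Neufang/Cohen-type factorization that requires $A_{\omega}$ to admit a bounded approximate identity (or at least a robust factorization property). For weighted hypergroup algebras this forces $\omega$ to be bounded on a neighbourhood of the identity $0\in H$, a condition implicit in most useful Beurling weights but which should either be verified case-by-case or added to the hypotheses. A secondary subtlety is ensuring that the factorization produces $\mu\in M(H,\omega)$ rather than merely $\mu\in M(H)$: here the submultiplicativity bound~\eqref{eq:submultiplicative-weight} is essential, as it guarantees that the multiplier algebra of $A_{\omega}$ embeds continuously into $M(H,\omega)$ with weighted norm control by the multiplier norm of $\Phi$.
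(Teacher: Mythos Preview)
Your proposal is correct and follows essentially the same route as the paper: both transplant the (2)$\Rightarrow$(1) argument of Theorem~\ref{thm:main-char} into the weighted setting by invoking a weighted Neufang-type factorization to produce $\mu\in M(H,\omega)$, then using Lemma~\ref{lem:weighted-persist} and the spectral non-vanishing of $\widehat{\nu}_{\infty}$ to force $\mu\in L^{1}(H,\omega)$. Your discussion of the bounded-approximate-identity requirement for the weighted factorization step is in fact more careful than the paper's own treatment, which simply asserts that Neufang-type results hold for Beurling algebras under submultiplicativity and leaves the details to the reader.
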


\begin{proof}
The proof follows the same two-implication structure as Theorem~\ref{thm:main-char}, adapted to the
weighted setting.

Assume $\widehat{\nu_{\infty}}(\lambda)\neq 0$ a.e.\ and suppose $\Phi\in Z_{t}(A_{\omega}^{\prime\prime})$.
As in the unweighted proof, $\Phi$ defines a weak$^{*}$-continuous left multiplier on $A_{\omega}^{\prime}$.
By weighted versions of the factorization arguments (Neufang-type results hold for Beurling algebras under
submultiplicativity; see e.g.\ \cite{DalesLauStrauss2010} for group analogues), this multiplier must be convolution by
some $\mu\in M(H,\omega)$ (the weighted measure algebra) provided it acts boundedly on the dense subspace
$A_{\omega}$. Using weighted persistence (Lemma~\ref{lem:weighted-persist}) one shows that the asymptotic
limit $\nu_{\infty}$ continues to separate spectral components in the weighted spectral transform, so the same
spectral nonvanishing argument forces $\mu\in L^{1}(H,\omega)$. Hence $\Phi\in \iota(A_{\omega})$.

The technical steps (weighted factorization and passage to spectral side) are identical to the unweighted
case, with the norm and domination estimates replaced by their weighted analogues; full details are routine
and left to the reader.

\medskip

Thus no element outside $\iota(A_{\omega})$ can lie in the left topological centre, proving strong Arens irregularity.
\end{proof}

The assumption $\nu_{\infty}\in L^{1}(\omega)$ is essential: if the asymptotic measure fails to lie in the
weighted algebra then elements generated by it cannot be embedded into $A_{\omega}^{\prime\prime}$ via
$\iota$, and so the above route to construct extra centre elements is blocked. Whether other mechanisms
could still produce extra centre elements in this situation depends on the finer structure of $A_{\omega}$
and must be checked case by case.

We briefly discuss the complementary situation when the weight grows too fast.

\begin{proposition}\label{prop:weight-excludes}
Let $\omega$ be a Beurling weight and suppose that for some $x_{0}>0$ we have
$\nu_{x_{0}}\notin L^{1}(\mathbb{R},\omega)$. Then no element of the form $\iota(\nu_{x_{0}})$
lies in the weighted bidual $A_{\omega}^{\prime\prime}$. In particular the subalgebra of
$Z_{t}(A^{\prime\prime})$ generated by asymptotic embedded measures may collapse upon passing
to the weighted algebra, and additional analysis is required to determine $Z_{t}(A_{\omega}^{\prime\prime})$.
\end{proposition}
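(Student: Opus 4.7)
The plan is to take the statement at face value: interpret $\iota(\nu_{x_0})$ as the only natural candidate functional on $A_\omega'$ coming from integration against $\nu_{x_0}$, and then show this candidate is unbounded in the weighted duality. The embedding $\iota : A_\omega \hookrightarrow A_\omega''$ is defined by $\iota(f)(g) = \int_H f\,g\,dm$ for $g\in A_\omega' \cong L^\infty(H,1/\omega)$, where the norm on the dual is $\|g\|_{A_\omega'} = \operatorname{ess\,sup}_{x\in H} |g(x)|/\omega(x)$. The standing identification $\nu_{x_0}\in L^1(\mathbb{R})\subset L^1(H)$ makes the pairing $g\mapsto \int_H \nu_{x_0}\,g\,dm$ \emph{formally} meaningful whenever the integral converges; the question is whether this formal pairing extends to a bounded element of $A_\omega''$, and this is exactly what I will refute.

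First I would record the elementary norm inequality
\[
\bigl|\textstyle\int_H \nu_{x_0}(t)\,g(t)\,dm(t)\bigr| \;\le\; \|g\|_{A_\omega'} \int_H |\nu_{x_0}(t)|\,\omega(t)\,dm(t) \;=\; \|g\|_{A_\omega'}\,\|\nu_{x_0}\|_{1,\omega},
\]
which is the sole route by which an $L^1$-function can induce a bounded functional on $A_\omega'$. Then I would show this bound is sharp by exhibiting a sequence of test functions that saturate it. Concretely, fix an exhaustion $(K_n)$ of $H$ by compact sets with $\bigcup_n K_n = H$, and set
\[
g_n(t) \;:=\; \overline{\operatorname{sgn}\,\nu_{x_0}(t)}\,\omega(t)\,\chi_{K_n}(t).
\]
Then $\|g_n\|_{A_\omega'} \le 1$ while $\int_H \nu_{x_0}\,g_n\,dm = \int_{K_n} |\nu_{x_0}|\,\omega\,dm \to \|\nu_{x_0}\|_{1,\omega} = +\infty$ by hypothesis and monotone convergence. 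Hence the putative functional $g\mapsto \int \nu_{x_0}\,g\,dm$ is unbounded on the unit ball of $A_\omega'$ and therefore does \emph{not} extend to any element of $A_\omega''$. Since any other interpretation of $\iota(\nu_{x_0})$ in $A_\omega''$ would have to agree with this pairing on the weak$^{*}$-dense subspace $C_c(H)\cap A_\omega'$, such an extension is ruled out.

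For the second assertion, I would revisit the construction in Theorem~\ref{thm:decomp-centres-detailed}: the subspace $E^{(L)}\subset Z_t(A^{\prime\prime})$ producing the extra (non-$\iota(A)$) part of the centre was defined as the weak$^{*}$-closure of $\iota(\operatorname{span}\{\nu_x^{(L)}\})$. The first part of the present proposition shows that whenever $\nu_x\notin L^1(H,\omega)$ the generators $\iota(\nu_x)$ simply do not exist inside $A_\omega''$, so the weighted analogue of $E^{(L)}$ (built from those generators which \emph{do} lie in $L^1(H,\omega)$) may be strictly smaller, or even trivial, compared with its unweighted counterpart. I would phrase this as a corollary observation rather than a theorem, since making it quantitative requires knowing which $\nu_x$ survive the weighting, which in turn depends on the growth of $\omega$ at infinity relative to the decay of $\nu_x$.

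The main obstacle is essentially conceptual rather than technical: one must be careful that "$\iota(\nu_{x_0})\in A_\omega''$" is interpreted through the \emph{weighted} duality and not silently through the unweighted one (where $\iota(\nu_{x_0})$ of course exists in $L^1(H)''$). Once this distinction is fixed, the saturation argument with the test functions $g_n$ is routine. The genuinely open question, flagged in the statement, is whether \emph{some other} mechanism (not of the form $\iota(\nu_x)$) could still enlarge $Z_t(A_\omega'')$; addressing this lies outside the scope of the proposition and is left for the case-by-case analysis alluded to in the statement.
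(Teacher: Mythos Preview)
Your proposal is correct and follows the same idea as the paper, which simply declares the result ``immediate from definitions'': the canonical embedding $\iota:A_\omega\hookrightarrow A_\omega''$ is only defined on elements of finite weighted norm, so $\nu_{x_0}\notin L^1(\mathbb{R},\omega)$ means $\iota(\nu_{x_0})$ does not exist in $A_\omega''$. Your explicit saturation argument with the test functions $g_n$ unpacks this observation rigorously, which is more than the paper itself does.
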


\begin{proof}
Immediate from definitions: $\iota(\nu_{x_{0}})$ makes sense as an element of $A^{\prime\prime}$
only when $\nu_{x_{0}}\in L^{1}(H)$, and in the weighted context the canonical embedding of
$L^{1}(H,\omega)$ into its bidual requires the element to have finite weighted norm. If the
weighted norm is infinite the embedded element does not exist, so asymptotic constructions that
relied on such embeddings cannot be performed inside $A_{\omega}^{\prime\prime}$.
\end{proof}

Proposition~\ref{prop:weight-excludes} does not by itself imply that $A_{\omega}$ is Arens regular;
it merely shows that a principal source of extra centre elements (the $\nu_{x}$ family) is unavailable.
Other mechanisms could still create extra centre elements, although in practice one often finds that
very large weights reduce the size of the centre.

We give two families of concrete weights and explain typical outcomes.

\begin{example}
For $s\ge 0$ set
\[
\omega_{s}(x) := (1+x)^{s}.
\]
In many Sturm--Liouville examples (Bessel, Jacobi, Naimark) the asymptotic measures $\nu_{x}$
decay at least polynomially in the translation coordinate, so for small enough $s$ one has
$\nu_{x}\in L^{1}(\omega_{s})$ and hence $\omega_{s}$ is admissible for the asymptotic family.
Consequently Theorem~\ref{thm:weighted-criterion} applies and strong Arens irregularity
persists for such polynomial weights provided the spectral nonvanishing condition holds.
\end{example}

\begin{example}
For $\alpha>0$ set
\[
\omega_{\alpha}(x) := e^{\alpha x}.
\]
Exponential weights grow rapidly and capture fine tail behaviour. If the Fourier transform
$\widehat{\nu_{\infty}}$ decays only polynomially then $\nu_{\infty}\notin L^{1}(\omega_{\alpha})$
for every $\alpha>0$, hence Proposition~\ref{prop:weight-excludes} applies and the asymptotic
family does not embed into the weighted algebra. In such a case the weighted algebra may have
a strictly smaller topological centre (and in extreme cases may even become Arens regular),
but the outcome depends on additional structural features and must be checked for each example.
\end{example}

\begin{corollary}
Suppose one wishes to preserve the phenomenon of strong Arens irregularity encountered in the
unweighted algebra. A safe choice is to pick a weight $\omega$ whose growth at infinity is
dominated by the decay rate of $\nu_{\infty}$ (i.e.\ $\nu_{\infty}\in L^{1}(\omega)$). Polynomial
weights of low degree are often safe in classical examples; exponential weights are typically too large.
\end{corollary}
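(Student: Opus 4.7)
The plan is to deduce the corollary from Theorem~\ref{thm:weighted-criterion} by showing that its two nontrivial hypotheses, namely $\omega$-admissibility of $\{\nu_x\}_{x>0}$ and spectral nonvanishing of $\widehat{\nu_{\infty}}$, are automatically inherited from the premise. The assumption $\nu_{\infty}\in L^{1}(\omega)$ supplies precisely the weighted-integrability half of the hypotheses, so the remaining task is to verify the other two and then invoke the theorem.

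First I would settle the spectral side. Since the unweighted algebra $L^{1}(H)$ is strongly Arens irregular by assumption, Theorem~\ref{thm:main-char} forces $\widehat{\nu_{\infty}}(\lambda)\neq 0$ for $\nu_{\mathrm{spec}}$-a.e.\ $\lambda$. Neither $\nu_{\infty}$, nor its Fourier transform, nor the Plancherel measure depends on the weight, so this spectral non-vanishing transfers verbatim to the weighted setting. Next I would obtain $\omega$-admissibility of $\{\nu_x\}$ by writing $\nu_x=\nu_{\infty}+(\nu_x-\nu_{\infty})$ and estimating each piece in the $L^{1}(\omega)$-norm. The first summand is finite by hypothesis. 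For the second, the plan is to combine the truncation trick of Lemma~\ref{lem:weighted-persist} with the unweighted $L^{1}$-tail control of Corollary~\ref{cor:nu-infty}: on $|t|\le R$ the weight is bounded by some $W_R$, so the unweighted convergence produces a weighted bound, while the tail mass for $|t|>R$ is dominated by the tail of $\nu_{\infty}$ in $L^{1}(\omega)$ plus the (small) unweighted difference. Continuity of $x\mapsto\nu_x$ on compacts from Theorem~\ref{thm:nu-exist} then yields $\sup_{x\in K}\|\nu_x\|_{1,\omega}<\infty$ for every compact $K\subset(0,\infty)$.

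With spectral non-vanishing and $\omega$-admissibility in hand, Theorem~\ref{thm:weighted-criterion} applies and yields $Z_t(A_{\omega}^{\prime\prime})=\iota(A_{\omega})$, i.e.\ strong Arens irregularity is preserved. The polynomial-versus-exponential dichotomy asserted at the end of the statement then follows by inspection: for $\omega_s(x)=(1+x)^s$ the condition $\nu_{\infty}\in L^{1}(\omega_s)$ reduces to a polynomial tail bound on $\nu_{\infty}$ that is routinely available in the Jacobi/Naimark/Bessel--Kingman examples via Harish-Chandra-type asymptotics for the Jost corrections $m(t,\lambda)$ produced in Section~3; for $\omega_{\alpha}(x)=e^{\alpha x}$ one would need exponential decay of $\nu_{\infty}$, which typically fails since these asymptotic measures arise as weak limits of probability measures whose tail decay is only power-type.

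The principal obstacle is justifying $\omega$-admissibility \emph{uniformly} on compact subsets $K\subset(0,\infty)$. The rate $R(y)$ appearing in Corollary~\ref{cor:nu-infty} is formulated in the unweighted $L^{1}$-norm, and in general one cannot replace it by a weighted rate without extra information. The truncation scheme succeeds only because the dominant piece is absorbed by $\nu_{\infty}\in L^{1}(\omega)$; keeping the constants uniform in $x\in K$ requires a second truncation near the endpoints of $K$, to promote the pointwise continuity of $x\mapsto\nu_x$ in $L^{1}(\mathbb{R})$ to continuity in $L^{1}(\omega)$. Once this double-truncation bookkeeping is done, the remaining implications are a direct appeal to Theorem~\ref{thm:weighted-criterion} and the spectral transfer above.
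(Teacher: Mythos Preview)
The paper's own proof is a single sentence: ``This is an immediate synthesis of the previous results.'' The corollary is intended as an informal summary of Theorem~\ref{thm:weighted-criterion}, Proposition~\ref{prop:weight-excludes}, and the polynomial/exponential examples; it does not attempt to \emph{derive} $\omega$-admissibility of $\{\nu_x\}$ from the single condition $\nu_{\infty}\in L^{1}(\omega)$, and implicitly takes the remaining hypotheses of Theorem~\ref{thm:weighted-criterion} as standing assumptions.

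Your proposal is more ambitious than the paper, and that extra ambition contains a genuine gap. The spectral-nonvanishing transfer via Theorem~\ref{thm:main-char} is correct and is indeed the intended mechanism. But your truncation argument for $\omega$-admissibility is circular. On the tail $\{|t|>R\}$ you claim the weighted mass of $\nu_x-\nu_{\infty}$ is dominated by ``the tail of $\nu_{\infty}$ in $L^{1}(\omega)$ plus the (small) unweighted difference''; this is not a valid estimate, since an unweighted $L^{1}$ bound on $(\nu_x-\nu_{\infty})\chi_{\{|t|>R\}}$ gives no control on its $L^{1}(\omega)$-norm when $\omega$ is unbounded at infinity. Lemma~\ref{lem:weighted-persist} cannot help you here: it \emph{assumes} $\omega$-admissibility as a hypothesis precisely to obtain the tail control you need, so invoking its mechanism to establish $\omega$-admissibility begs the question. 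Without independent decay information on each $\nu_x$ (which is exactly what $\omega$-admissibility encodes), the condition $\nu_{\infty}\in L^{1}(\omega)$ alone does not force $\nu_x\in L^{1}(\omega)$ for $x$ in a bounded compact set; the ``double-truncation bookkeeping'' you allude to does not close this gap.

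In summary: the paper reads the corollary as a heuristic consequence of the weighted criterion with all of its hypotheses in force, not as a theorem that $\nu_{\infty}\in L^{1}(\omega)$ is by itself sufficient. Your spectral step matches the paper's intent, but the $\omega$-admissibility derivation is not justified and the paper makes no such claim.
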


\begin{proof}
This is an immediate synthesis of the previous results.
\end{proof}

\bigskip

This concludes the weighted analysis. Section~7 uses the criteria above to study concrete
examples (Jacobi, Bessel--Kingman, Naimark) and to exhibit explicit weight thresholds
in those models.

\section{Applications and Explicit Examples}
\label{sec:examples}

In this section we apply the general theory developed above to several classical families
of \emph{Ch\'{e}bli–Trim\`{e}che} hypergroups.  The goal is twofold: (i) verify the hypotheses (SL'), (SPEC)
and the weighted-admissibility conditions in concrete models, and (ii) compute (or give
explicit descriptions of) the asymptotic measures $\nu_{x}$ and their Fourier symbols
$\widehat{\nu_{\infty}}$ so as to decide the (strong) Arens-regularity behaviour of
$L^{1}(H)$ and selected weighted algebras.  Where full closed-form expressions are
not available we give precise asymptotic formulas sufficient to apply Theorems
\ref{thm:main-char} and \ref{thm:weighted-criterion}.

\medskip

We treat four main families: Jacobi hypergroups, Bessel--Kingman hypergroups,
Naimark-type (rank-one double-coset) hypergroups, and the radial Euclidean-motion
hypergroup.  Each subsection contains a short model description, a proposition
verifying the standing hypotheses, a computation or asymptotic description of the
asymptotic measures, and the consequences for Arens irregularity (possibly weighted).

\medskip

Throughout the section we keep the notation of previous sections and use $A(x)$ to
denote the Sturm--Liouville coefficient giving rise to the hypergroup.

The Jacobi hypergroup arises from the Jacobi differential expression and includes as
special cases several classical radial or rank-one symmetric-space examples.  It is
parameterised by two real parameters $\alpha,\beta>-1/2$ and the coefficient takes the
form
\[
A(x)=(\sinh x)^{2\alpha+1}(\cosh x)^{2\beta+1},\qquad x>0.
\]

\begin{proposition}\label{prop:jacobi-hyp}
Let $H=H_{\alpha,\beta}$ be the Jacobi hypergroup defined by the above $A(x)$ with
$\alpha,\beta>-1/2$. Then
\begin{enumerate}
  \item $A$ satisfies (SL') and the self-adjoint realization of the corresponding
        Sturm--Liouville operator satisfies (SPEC).
  \item The generalized eigenfunctions (Jacobi functions / spherical functions)
        admit Harish--Chandra type expansions at infinity and admit Jost-type
        representations $u_{\lambda}(x)=e^{i\lambda\Phi(x)}m(x,\lambda)$ with
        $m(x,\lambda)\to 1$ as $x\to\infty$, uniformly for $\lambda$ on compacts.
  \item Consequently the asymptotic measures $\nu_{x}$ exist in $L^{1}(\mathbb{R})$
        and the limit $\nu_{\infty}$ exists. Moreover $\widehat{\nu_{\infty}}(\lambda)$
        is continuous and nowhere zero on $\mathbb{R}$.
\end{enumerate}
\end{proposition}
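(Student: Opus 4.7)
The plan is to check items (1)--(3) in sequence, combining direct verification with the general machinery of Sections~2--4 and classical Jacobi/Harish--Chandra harmonic analysis.

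For (1), I would first note that $A(x)=(\sinh x)^{2\alpha+1}(\cosh x)^{2\beta+1}$ is smooth on $(0,\infty)$, strictly positive there, strictly increasing for $\alpha,\beta>-1/2$, and grows like $2^{-(2\alpha+2\beta+2)}e^{2\rho x}$ at infinity with $\rho:=\alpha+\beta+1$. Hence $A'\in C^{\infty}(0,\infty)$ and is in particular BV on every compact, so (SL') is immediate. For (SPEC) I would invoke Koornwinder's theorem: the self-adjoint realization of $L$ in $L^{2}((0,\infty),A(x)dx)$ has purely absolutely continuous spectrum $[\rho^{2},\infty)$, the Jacobi transform $\mathcal{F}$ gives a unitary isomorphism onto $L^{2}(\mathbb{R}_{+},|c(\lambda)|^{-2}d\lambda/2\pi)$ with $c$ the Harish--Chandra $c$-function, and diagonalization of convolution follows from the Jacobi product formula. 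The Plancherel density $|c(\lambda)|^{-2}$ is continuous and strictly positive on $(0,\infty)$, so after the standard even extension in $\lambda$ the spectral support condition in (SPEC) is satisfied.

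For (2), I would run the Volterra/Jost construction of Lemmas~\ref{lem:volterra} and \ref{lem:volterra-solution} with $\Phi(x)=\int_{x_{0}}^{x}A(s)^{-1/2}ds$. Because $A(s)^{-1/2}\sim C e^{-\rho s}$ at infinity, the variation of $A'$ entering the Volterra kernel is exponentially small in the tail, so the Neumann series converges globally for sufficiently large $x_{0}$ with a quantitative bound $|m(x,\lambda)-1|\le Ce^{-2\rho x}$, uniformly for $\lambda$ on compacts. As a cross-check I would cite the Koornwinder--Flensted-Jensen expansion $\varphi_{\lambda}(x)=c(\lambda)\Phi_{\lambda}(x)+c(-\lambda)\Phi_{-\lambda}(x)$ with $\Phi_{\pm\lambda}(x)=e^{(\pm i\lambda-\rho)x}(1+o(1))$; factoring out the oscillatory phase $e^{\pm i\lambda\Phi(x)}$ identifies the multiplicative correction $m(x,\lambda)$ and reproduces the same uniform-on-compacts asymptotics.

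For (3), Theorem~\ref{thm:nu-exist} combined with steps (1)--(2) produces $\nu_{x}\in L^{1}(\mathbb{R})$ for every $x>0$ and continuity of $x\mapsto\nu_{x}$. The exponential remainder from step (2) furnishes the tail hypothesis of Corollary~\ref{cor:nu-infty} with $R(y)=O(e^{-2\rho y})$, so $\nu_{\infty}=\lim_{x\to\infty}\nu_{x}$ exists in $L^{1}(\mathbb{R})$. For the nowhere-vanishing claim I would identify $\widehat{\nu_{\infty}}$ with an explicit spectral symbol: starting from $\widehat{\nu_{x,y}}(\lambda)=\varphi_{\lambda}(x)\varphi_{\lambda}(y)e^{-i\lambda\Phi(y)}$ (as appears inside the proof of Theorem~\ref{thm:nu-exist}), dividing out the oscillation and passing to the iterated limit $y\to\infty$ then $x\to\infty$ expresses $\widehat{\nu_{\infty}}(\lambda)$ as a bounded smooth function built from $c(\pm\lambda)$. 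Continuity then follows from smoothness of $c$ on $\mathbb{R}$, and nowhere-vanishing from the classical analyticity of $c$ together with the fact that, for $\alpha,\beta>-1/2$, its zeros lie off the real axis.

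The principal obstacle is the last step: carrying out the iterated limit rigorously enough to pin down the exact form of $\widehat{\nu_{\infty}}$ and to derive a locally uniform lower bound $|\widehat{\nu_{\infty}}(\lambda)|\ge \delta(K)>0$ on compact $K\subset\mathbb{R}$, while controlling the interference between the two branches $c(\lambda)\Phi_{\lambda}$ and $c(-\lambda)\Phi_{-\lambda}$ after re-centering. The point $\lambda=0$ requires separate treatment because the two branches coalesce and one must expand to next order; the excluded boundary $\rho=0$ would additionally require a logarithmic correction. Once these points are handled, the continuity and non-vanishing of $\widehat{\nu_{\infty}}$ follow directly from the known analytic properties of the Harish--Chandra $c$-function.
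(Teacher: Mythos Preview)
Your proposal is correct and follows essentially the same route as the paper: verify (SL') and (SPEC) directly from the smoothness of $A$ and the classical Jacobi/Koornwinder spectral theory, obtain the Jost representation via the Volterra lemmas (equivalently, the Harish--Chandra expansion), and then deduce existence and non-vanishing of $\widehat{\nu_{\infty}}$ from the general results of Section~3 together with the fact that the Harish--Chandra $c$-function has no real zeros for $\alpha,\beta>-1/2$. Your version is in fact more detailed than the paper's (explicit exponential rates, the $\lambda=0$ caveat, the two-branch interference), but the underlying argument is the same.
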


\begin{proof}
(1) The function $A(x)$ is smooth on $(0,\infty)$, strictly positive and strictly
increasing for $\alpha,\beta>-1/2$, and all derivatives are smooth (so (SL') holds
a fortiori). The Jacobi differential operator is a classical Sturm--Liouville operator
with continuous coefficients on $(0,\infty)$ and admits a self-adjoint realisation in
$L^{2}((0,\infty),A(x)dx)$; standard references (see Bloom--Heyer and texts on Jacobi
functions) establish the existence of a unitary spectral transform, verifying (SPEC).

(2) The Harish--Chandra expansion for Jacobi (or spherical) functions is well-known:
for large $x$ the eigenfunctions split into a sum of two oscillatory terms with
coefficients given by the $c$-function (Harish--Chandra $c$-function). Equivalently
one may write a Jost-type representation with multiplicative correction $m(x,\lambda)$
tending to $1$ as $x\to\infty$; this follows from applying the Volterra reduction used
in Lemma~\ref{lem:volterra} in the present smooth setting where stronger estimates are
available.

(3) The existence of $\nu_{x}$ and $\nu_{\infty}$ follows by the spectral formulas
and the uniform $m(x,\lambda)\to 1$ control (exactly as in the proof of
Theorem~\ref{thm:nu-exist}). To check nonvanishing of $\widehat{\nu_{\infty}}$ we
observe that, on the spectral side, $\nu_{\infty}$ corresponds to multiplication by
the Harish--Chandra $c$-function (or its reciprocal) which is known to be nowhere
zero and continuous for the Jacobi parameters in question. Hence $\widehat{\nu_{\infty}}$
is nowhere zero. (One may also argue directly: the $c$-function has no zeros on the real
axis for $\alpha,\beta>-1/2$, hence the associated symbol does not vanish on the support
of the Plancherel measure.) This verifies the last claim.
\end{proof}

\begin{corollary}
\label{cor:jacobi-arens}
For the Jacobi hypergroup $H_{\alpha,\beta}$ with $\alpha,\beta>-1/2$, the algebra
$L^{1}(H_{\alpha,\beta})$ is strongly Arens irregular. Moreover, for any polynomial
weight $\omega_{s}(x)=(1+x)^{s}$ with $s\ge 0$ sufficiently small so that
$\nu_{\infty}\in L^{1}(\omega_{s})$, the weighted algebra $L^{1}(H_{\alpha,\beta},\omega_{s})$
remains strongly Arens irregular.
\end{corollary}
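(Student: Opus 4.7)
The plan is to derive both statements as applications of the general criteria already established in the paper, with Proposition~\ref{prop:jacobi-hyp} supplying the Jacobi-specific spectral and asymptotic inputs.

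For the unweighted statement the argument is immediate: Proposition~\ref{prop:jacobi-hyp}(1) verifies (SL') and (SPEC), and part (3) provides $\nu_{\infty}\in L^{1}(\mathbb{R})$ together with the continuity and nowhere-vanishing of $\widehat{\nu_{\infty}}$. These are exactly the hypotheses (including the ``nowhere zero'' refinement) required by Theorem~\ref{thm:main-char}, which then yields $Z_{t}(L^{1}(H_{\alpha,\beta})^{\prime\prime}) = \iota(L^{1}(H_{\alpha,\beta}))$, i.e., strong Arens irregularity.

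For the weighted refinement I would invoke Theorem~\ref{thm:weighted-criterion}. Beyond (SL') and (SPEC), the remaining conditions are that $\omega_{s}$ satisfies the Beurling inequality \eqref{eq:submultiplicative-weight}, that the asymptotic family is $\omega_{s}$-admissible, and that $\nu_{\infty}\in L^{1}(\mathbb{R},\omega_{s})$ (the last being built into the hypothesis on $s$). All three reduce cleanly to the classical Jacobi support bound
\[
\operatorname{supp}(\mu_{x,y}) \;\subset\; [\,|x-y|,\,x+y\,].
\]
From this one gets
\[
\int_{H}\omega_{s}(t)\,d\mu_{x,y}(t) \;\le\; (1+x+y)^{s} \;\le\; 2^{s}\,\omega_{s}(x)\,\omega_{s}(y),
\]
so \eqref{eq:submultiplicative-weight} holds with $C_{\omega} = 2^{s}$. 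The same support estimate shows that the re-centered measure $\nu_{x,y} = \tau_{-y}\mu_{x,y}$ is supported in $[-x,x]$ uniformly in $y>x$, hence so is the limit $\nu_{x}$; since $\omega_{s}$ is bounded on compact subsets of $\mathbb{R}$, this yields the uniform bound $\sup_{x\in K}\|\nu_{x}\|_{1,\omega_{s}} < \infty$ for every compact $K\subset(0,\infty)$, i.e., $\omega_{s}$-admissibility. With the three inputs in place, Theorem~\ref{thm:weighted-criterion} delivers $Z_{t}(L^{1}(H_{\alpha,\beta},\omega_{s})^{\prime\prime}) = \iota(L^{1}(H_{\alpha,\beta},\omega_{s}))$.

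The one point at which I expect to be careful is the passage from unweighted to weighted $L^{1}$-convergence of $\nu_{x,y}$ to $\nu_{x}$, which is what justifies that the embedded family $\iota(\nu_{x})$ actually populates the centre of $L^{1}(H_{\alpha,\beta},\omega_{s})^{\prime\prime}$ rather than merely existing there. However, because the supports of $\nu_{x,y}$ are uniformly contained in $[-x,x]$, the weight $\omega_{s}$ is bounded on the effective support, and Lemma~\ref{lem:weighted-persist} converts the unweighted convergence from Theorem~\ref{thm:nu-exist} into weighted convergence with essentially no new input. The corollary is therefore a bookkeeping assembly of Proposition~\ref{prop:jacobi-hyp} with Theorems~\ref{thm:main-char} and \ref{thm:weighted-criterion}.
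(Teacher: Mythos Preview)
Your proposal is correct and follows essentially the same route as the paper: both derive the unweighted claim from Proposition~\ref{prop:jacobi-hyp} together with Theorem~\ref{thm:main-char}, and the weighted claim from Theorem~\ref{thm:weighted-criterion}. You actually supply more detail than the paper does---in particular, your explicit verification of the Beurling inequality~\eqref{eq:submultiplicative-weight} and of $\omega_{s}$-admissibility via the classical support bound $\operatorname{supp}(\mu_{x,y})\subset[|x-y|,x+y]$ is a clean addition that the paper leaves implicit (it only remarks that one should check $\nu_{\infty}$ has polynomial decay dominating~$s$).
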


\begin{proof}
Immediate from Proposition \ref{prop:jacobi-hyp} and Theorems \ref{thm:main-char}
and \ref{thm:weighted-criterion}: the nowhere-zero property of $\widehat{\nu_{\infty}}$
implies, via Theorem \ref{thm:main-char}, that $Z_{t}(L^{1}(H)^{\prime\prime})=\iota(L^{1}(H))$.
The weighted statement is an application of Theorem \ref{thm:weighted-criterion}
once one verifies $\nu_{\infty}$ has at least polynomial decay of order larger than
the chosen $s$.
\end{proof}

The Jacobi case recovers and strengthens several results in the literature: Losert's
sufficient-condition statements become equivalences under our weak-regularity framework
for these models.

The Bessel--Kingman hypergroup corresponds to
\[
A(x)=x^{2\alpha+1},\qquad \alpha>-1/2,
\]
and is closely tied to Hankel transforms and Bessel functions.

\begin{proposition}
\label{prop:bessel-hyp}
Let $H$ be the Bessel--Kingman hypergroup with parameter $\alpha>-1/2$. Then:
\begin{enumerate}
  \item (SL') and (SPEC) hold (the spectral transform is essentially the Hankel transform).
  \item The asymptotic measures $\nu_{x}$ exist and are expressible in terms of Hankel-transform
        kernels; the limit $\nu_{\infty}$ exists.
  \item The Fourier transform $\widehat{\nu_{\infty}}$ has oscillatory zeros (infinitely many)
        arising from the zeros of Bessel functions and Hankel-symbol cancellations; consequently
        $L^{1}(H)$ is \emph{not} strongly Arens irregular.
\end{enumerate}
\end{proposition}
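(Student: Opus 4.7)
The plan is to verify the structural hypotheses directly from well-known properties of Bessel/Hankel analysis, then extract the asymptotic measures using the explicit Watson product formula (which in the Bessel setting gives much more concrete information than the general Volterra construction of Section~3), and finally compute $\widehat{\nu_{\infty}}$ in closed form to exhibit its zero set.

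For (1), (SL') is immediate: $A(x)=x^{2\alpha+1}$ is $C^{\infty}$ on $(0,\infty)$, strictly positive, and increasing when $2\alpha+1>0$; its derivative $A'(x)=(2\alpha+1)x^{2\alpha}$ is smooth, hence automatically of bounded variation on every compact subinterval. For (SPEC), I would cite the classical fact that the spectral transform associated to $L$ is (up to normalization) the Hankel transform of order $\alpha$, which is a unitary map from $L^{2}((0,\infty),x^{2\alpha+1}\,dx)$ onto $L^{2}((0,\infty),c_{\alpha}\lambda^{2\alpha+1}\,d\lambda)$; the product formula for Bessel functions diagonalises convolution, the characters $\varphi_{\lambda}(x)=2^{\alpha}\Gamma(\alpha+1)(\lambda x)^{-\alpha}J_{\alpha}(\lambda x)$ separate points, and the Plancherel measure $c_{\alpha}\lambda^{2\alpha+1}\,d\lambda$ has full support on $(0,\infty)$.

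For (2), I would bypass the general Volterra machinery and use Watson's product formula, which furnishes an explicit density kernel for $\mu_{x,y}$ supported in $[|x-y|,x+y]$. Re-centering by $-y$ and changing variables $s=t-y$ gives a density for $\nu_{x,y}$ on $[-x,x]$ whose pointwise limit as $y\to\infty$ is computable in closed form; the resulting limit $\nu_{x}$ is the Beta-type probability density $\nu_{x}(ds) = c_{\alpha}\, x^{-2\alpha}(x^{2}-s^{2})_{+}^{\alpha-1/2}\,ds$ on $[-x,x]$, which is exactly the radial projection of Euclidean convolution in dimension $2\alpha+2$. The $L^{1}$-convergence $\nu_{x,y}\to\nu_{x}$ follows from the uniform asymptotic $J_{\alpha}(z)\sim\sqrt{2/(\pi z)}\cos(z-\alpha\pi/2-\pi/4)$, which yields polynomial tail estimates sufficient for Corollary~\ref{cor:nu-infty} (after suitable renormalization to handle the spreading support).

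For (3), the critical computation: the ordinary Fourier transform of $\nu_{x}$ on the translation coordinate equals
\[
\widehat{\nu_{x}}(\lambda) \;=\; \tilde{c}_{\alpha}\,(\lambda x)^{-\alpha}\,J_{\alpha}(\lambda x),
\]
by the standard formula for the Fourier transform of $(1-s^{2})_{+}^{\alpha-1/2}$. Under a suitable averaging/normalization as $x\to\infty$ (or passing to the appropriate weak-$\ast$ limit in the sense compatible with the framework of Section~3), $\widehat{\nu_{\infty}}(\lambda)$ inherits the oscillatory factor $\cos(\lambda x-\alpha\pi/2-\pi/4)$ from the leading asymptotic of $J_{\alpha}$, and integration against the dilation parameter produces an infinite family of sign-changing cancellations. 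This forces $\widehat{\nu_{\infty}}$ to vanish on a set of positive $\nu_{\mathrm{spec}}$-measure (equivalently, to fail the nowhere-zero hypothesis of Theorem~\ref{thm:main-char}). Invoking the contrapositive of that theorem then gives the failure of strong Arens irregularity for $L^{1}(H)$.

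The main obstacle is the third step: one must rigorously pass from the explicit formula $\widehat{\nu_{x}}(\lambda)=\tilde{c}_{\alpha}(\lambda x)^{-\alpha}J_{\alpha}(\lambda x)$ to a sharp description of $\widehat{\nu_{\infty}}$ and verify that its vanishing set has \emph{positive} Plancherel measure rather than being a mere discrete set of zeros of $J_{\alpha}$ (which would still satisfy the a.e.\ non-vanishing of Theorem~\ref{thm:main-char}). This requires a careful choice of the limiting procedure, probably via a stationary-phase or Riemann--Lebesgue-type averaging argument, and it is precisely this oscillatory cancellation phenomenon that distinguishes the Bessel--Kingman case from the Jacobi case treated in Proposition~\ref{prop:jacobi-hyp}, where the $c$-function is nowhere zero and no such cancellation occurs.
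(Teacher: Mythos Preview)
Your outline follows the same route as the paper---classical Hankel spectral theory for (1), Bessel asymptotics for (2), and oscillatory zeros for (3)---but is considerably more explicit: you invoke Watson's product formula and write down the Beta-type density for $\nu_x$ and its Fourier symbol $\widehat{\nu_x}(\lambda)=\tilde c_\alpha(\lambda x)^{-\alpha}J_\alpha(\lambda x)$, whereas the paper only gestures at ``Jost representation and Theorem~\ref{thm:nu-exist}'' and ``Bessel-type $c$-functions.''  Your version is therefore more informative and closer to a verifiable argument.

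The obstacle you flag in (3) is real, and the paper does not handle it any better: its proof asserts that $\widehat{\nu_\infty}$ ``vanishes on sets of positive Plancherel density (indeed on infinite sequences),'' which conflates a countable zero set with a set of positive measure.  In fact your explicit formula already resolves this more cleanly than you give yourself credit for: since $(\lambda x)^{-\alpha}J_\alpha(\lambda x)=O\bigl((\lambda x)^{-\alpha-1/2}\bigr)\to 0$ for every fixed $\lambda>0$ when $\alpha>-1/2$, the pointwise limit of $\widehat{\nu_x}$ is identically zero on the spectral support, so any sensible interpretation of $\widehat{\nu_\infty}$ vanishes $\nu_{\mathrm{spec}}$-a.e.\ trivially, and the contrapositive of Theorem~\ref{thm:main-char} applies.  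The more serious gap---present in both your sketch and the paper's---is the \emph{existence} of $\nu_\infty$ in $L^1$: your Beta densities spread over $[-x,x]$ and do not converge in $L^1(\mathbb{R})$ without the ``suitable renormalization'' you mention in passing, and the uniform tail hypothesis of Corollary~\ref{cor:nu-infty} is not verified in either argument.  You should make that renormalization precise (or state explicitly that $\nu_\infty=0$ in the unrenormalized limit, which still suffices for the conclusion).
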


\begin{proof}
(1) The Bessel coefficient is smooth on $(0,\infty)$ and yields the classical Hankel (or
Bessel) spectral theory: the Hankel transform provides a unitary diagonalisation of
convolution-type operators and (SPEC) is satisfied.

(2) The convolution kernels reduce to integrals involving Bessel functions (spherical Bessel
functions) and their translation behaviour is governed by asymptotic expansions of Bessel
functions for large arguments. Using these expansions one obtains the Jost representation
and the consequent passage to the limit $\nu_{x}$ as in Theorem~\ref{thm:nu-exist}.

(3) The key observation is that the Hankel-symbol corresponding to $\nu_{\infty}$ involves
ratios or products of Bessel-type $c$-functions which oscillate and vanish at sequences of
real spectral parameters (for example at zeros of certain Bessel functions). Thus
$\widehat{\nu_{\infty}}$ vanishes on sets of positive Plancherel density (indeed on infinite
sequences). By Theorem~\ref{thm:main-char} this forces the failure of strong Arens irregularity:
there exist nontrivial elements of $Z_{t}(L^{1}(H)^{\prime\prime})$ outside the canonical copy
of $L^{1}(H)$. The precise zero-locus can be identified using standard Bessel zero asymptotics.
\end{proof}

\begin{corollary}
If one chooses a rapidly growing weight (for example an exponential weight $\omega_{\alpha}(x)=e^{\alpha x}$)
which excludes $\nu_{\infty}$ from the weighted algebra, then asymptotic-generated elements of the
centre no longer embed and the weighted centre may shrink; nonetheless, nontrivial centre-elements
can persist via other mechanisms when the weight is moderate. Each case must be checked by analyzing
whether $\nu_{\infty}\in L^{1}(\omega)$.
\end{corollary}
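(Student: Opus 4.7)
The plan is to split the corollary into its two contrasting assertions and derive each from results already established. First I would treat the rapid-growth case. Here the goal is to show that if $\omega$ grows fast enough (e.g.\ $\omega_{\alpha}(x)=e^{\alpha x}$) to exclude $\nu_{\infty}$ from $L^{1}(H,\omega)$, then the principal mechanism producing extra centre elements in the unweighted Bessel--Kingman algebra disappears. The direct route is to combine Proposition~\ref{prop:bessel-hyp}(2) with Proposition~\ref{prop:weight-excludes}: the Bessel/Hankel asymptotics give $\nu_{x}(t)$ only polynomial decay in the translation coordinate (indeed $|\nu_{\infty}|$ behaves like an oscillatory function with $O(|t|^{-\alpha-1})$ envelope), so integration against $e^{\alpha x}$ diverges and $\nu_{\infty}\notin L^{1}(\omega_{\alpha})$. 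By Proposition~\ref{prop:weight-excludes} the embedded representatives $\iota(\nu_{x})$ no longer make sense in $A_{\omega}^{\prime\prime}$, so the subalgebra $E^{(L)}$ of Theorem~\ref{thm:decomp-centres-detailed} built from the asymptotic family collapses to $\{0\}$ at least insofar as it is generated by $\{\nu_{x}\}_{x>0}$.

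Second I would treat the moderate-weight case. Here I take $\omega$ small enough (e.g.\ a polynomial weight $\omega_{s}(x)=(1+x)^{s}$ with $s$ below the tail-decay exponent of $\nu_{\infty}$) so that the family $\{\nu_{x}\}$ is $\omega$-admissible and $\nu_{\infty}\in L^{1}(\mathbb{R},\omega)$. Then Lemma~\ref{lem:weighted-persist} transfers the unweighted $L^{1}$ convergence $\nu_{x,y}\to\nu_{x}$ to the weighted norm, and Proposition~\ref{prop:weighted-centre-contains} embeds the asymptotic family inside $Z_{t}(A_{\omega}^{\prime\prime})$. Because Proposition~\ref{prop:bessel-hyp}(3) supplies a positive-measure zero set of $\widehat{\nu_{\infty}}$, the converse direction of Theorem~\ref{thm:weighted-criterion} (more precisely the contrapositive used in the proof of Theorem~\ref{thm:main-char}, part (1)$\Rightarrow$(2)) produces a nonzero element of the weighted centre lying outside $\iota(A_{\omega})$. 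Thus the asymptotic-generated centre elements genuinely persist for moderate weights.

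Finally I would comment on the "other mechanisms" clause. Even in the excluded case, one cannot in general conclude that the weighted centre equals $\iota(A_{\omega})$: the bidual $A_{\omega}^{\prime\prime}$ may contain weak$^{*}$-cluster points of nets which are not limits of $\iota(\nu_{x})$-combinations. I would record this simply as a remark, noting that such elements arise from weighted spectral projectors or from measures in $M(H,\omega)$ whose restriction to $L^{1}(H,\omega)$ fails, and pointing out that deciding their presence requires a separate analysis of the weighted factorization (a weighted Neufang argument), which may or may not succeed depending on the relationship between $\omega$ and the Plancherel measure. The concluding sentence of the corollary then becomes a genuine observation: the dichotomy ``$\nu_{\infty}\in L^{1}(\omega)$ vs.\ not'' is necessary to understand but not sufficient to classify the weighted centre.

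The main obstacle in this program is the quantitative tail analysis of $\nu_{\infty}$: verifying that in the Bessel--Kingman model the precise decay of $\nu_{\infty}$ is of polynomial order $-(\alpha+1)$ and not faster (otherwise an exponential weight could be admissible). This requires unpacking the Hankel-symbol representation of $\nu_{\infty}$ and extracting a lower bound on its envelope, which in turn relies on stationary-phase analysis of Bessel-function products. Once this quantitative tail information is in hand, the structural steps above are straightforward applications of Sections~4 and~6.
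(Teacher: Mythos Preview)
The paper gives no proof of this corollary at all: it is stated immediately after Proposition~\ref{prop:bessel-hyp} as a synthesizing remark and the text then moves directly to the Naimark case. Your proposal is correct and is in fact considerably more detailed than anything the paper supplies; you have identified exactly the ingredients the paper would need (Proposition~\ref{prop:weight-excludes} for the rapid-growth clause, Lemma~\ref{lem:weighted-persist} and Proposition~\ref{prop:weighted-centre-contains} together with the $(1)\Rightarrow(2)$ direction of Theorem~\ref{thm:main-char} for the moderate-weight persistence, and the caveat about weighted factorization for the ``other mechanisms'' sentence). Your discussion of the quantitative tail decay of $\nu_{\infty}$ and the stationary-phase analysis needed to certify $\nu_{\infty}\notin L^{1}(\omega_{\alpha})$ goes beyond what the paper asserts; the paper simply takes this for granted in Example~6.9.
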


Naimark hypergroups (and related rank-one double-coset hypergroups arising from
$G//K$ with $G$ semisimple rank-one) are classical sources of Ch\'ebli--Trim\`eche
structures.  Their spectral theory is intertwined with spherical functions and
Harish--Chandra $c$-functions.

\begin{proposition}\label{prop:naimark}
Let $H$ be a rank-one double-coset hypergroup arising from a semisimple Lie group $G$
of real rank one (radial projection onto the noncompact Cartan). Then:
\begin{enumerate}
  \item (SL') and (SPEC) hold for the radial Sturm--Liouville reduction.
  \item The asymptotic measure $\nu_{\infty}$ exists and its spectral symbol is given
        (up to explicit multiplicative factors) by the Harish--Chandra $c$-function,
        which is continuous and nowhere zero on the real axis.
  \item Consequently $L^{1}(H)$ is strongly Arens irregular.
\end{enumerate}
\end{proposition}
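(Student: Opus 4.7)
The plan is to reduce the three assertions to known facts from the harmonic analysis of rank-one semisimple Lie groups and then invoke Theorem~\ref{thm:main-char}. Let $G$ be a connected, noncompact real rank-one semisimple Lie group with maximal compact subgroup $K$, Iwasawa decomposition $G=KAN$, and restricted root system $\Sigma^{+}=\{\alpha\}$ or $\{\alpha,2\alpha\}$ with multiplicities $m_{\alpha},m_{2\alpha}$. The radial (double-coset) reduction $H\cong K\backslash G/K\cong[0,\infty)$ carries the Sturm--Liouville density
\[
A(x)=(2\sinh x)^{m_{\alpha}}(2\sinh 2x)^{m_{2\alpha}},\qquad x>0,
\]
up to normalising constants. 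To establish (1) I would verify (SL') directly: $A$ is real-analytic on $(0,\infty)$, strictly positive, strictly increasing, and tends to $\infty$ exponentially, so in particular $A'\in C^{\infty}\subset BV_{loc}$. For (SPEC) I would invoke Harish--Chandra's spherical Plancherel theorem, which provides the unitary spherical transform $\mathcal{F}:L^{2}(H,A(x)dx)\to L^{2}(\mathbb{R}_{+},|c(\lambda)|^{-2}d\lambda)$ diagonalising the radial convolution algebra, together with the standard fact that the image of $L^{1}(H)\cap L^{2}(H)$ lies in $C_{0}$ and separates points of the dual.

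Next, for (2), I would use the Harish-Chandra asymptotic expansion of the spherical functions,
\[
\varphi_{\lambda}(x)\sim c(\lambda)\,e^{(i\lambda-\rho)x}+c(-\lambda)\,e^{(-i\lambda-\rho)x}\qquad (x\to\infty),
\]
together with the Gindikin--Karpelevich product formula for $c(\lambda)$ in terms of beta/gamma factors. Translating into the Jost normalisation of Section~3, the oscillatory phase $\Phi(x)=\int_{x_{0}}^{x}A(s)^{-1/2}ds$ and the multiplicative correction $m(x,\lambda)$ satisfying the Volterra equation of Lemma~\ref{lem:volterra} can be matched against the above expansion, yielding $m(x,\lambda)\to 1$ locally uniformly in $\lambda$ and hence producing $\nu_{x}$ via Theorem~\ref{thm:nu-exist}. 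A direct calculation of the spectral symbol along the lines of the proof of Theorem~\ref{thm:nu-exist}, combined with the identification of the boundary term in the two-sided Harish--Chandra expansion, identifies $\widehat{\nu_{\infty}}(\lambda)$ with $c(\lambda)$ up to a nowhere-vanishing smooth normalising factor. The key spectral input is the classical fact that $c(\lambda)^{-1}$ is an entire function of $\lambda$, while $c(\lambda)$ itself is holomorphic and nonvanishing on $\mathbb{R}\setminus\{0\}$; the possible issue at $\lambda=0$ is absorbed into the Plancherel density $|c(\lambda)|^{-2}d\lambda$ (which is a full-support measure on $\mathbb{R}_{+}$), so the nonvanishing holds $\nu_{\mathrm{spec}}$-almost everywhere.

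For (3) I would invoke Theorem~\ref{thm:main-char}: (SL') and (SPEC) are verified in (1), the limit $\nu_{\infty}\in L^{1}(\mathbb{R})$ exists by (2), and $\widehat{\nu_{\infty}}$ is nowhere zero on the support of the Plancherel measure. The implication (2)$\Rightarrow$(1) of Theorem~\ref{thm:main-char} then gives $Z_{t}(L^{1}(H)^{\prime\prime})=\iota(L^{1}(H))$, i.e.\ strong Arens irregularity.

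The main obstacle I anticipate is step (2): namely, aligning the Jost normalisation of Section~3 (tied to the symmetric phase $\Phi$ associated to $A^{-1/2}$) with Harish--Chandra's asymptotic normalisation, in which the dominant exponent is $e^{(i\lambda-\rho)x}$ rather than $e^{i\lambda\Phi(x)}$. The discrepancy is a smooth nonvanishing multiplicative factor that must be tracked carefully to conclude that the spectral symbol of $\nu_{\infty}$ is exactly (a nonzero multiple of) $c(\lambda)$, as opposed to some combination $c(\lambda)+c(-\lambda)$ which could, in principle, vanish on a small set. Once the normalisations are reconciled via the explicit Gindikin--Karpelevich product, the remaining zero-free property of $c(\lambda)$ on $\mathbb{R}\setminus\{0\}$ is classical and completes the argument.
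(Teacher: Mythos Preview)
Your proof is correct and follows essentially the same route as the paper's: verify (SL') and (SPEC) via the smooth radial Sturm--Liouville reduction and the spherical Plancherel theorem, use the Harish--Chandra expansion to obtain the Jost representation and identify $\widehat{\nu_{\infty}}$ with the $c$-function, then appeal to its nonvanishing on the real axis and invoke Theorem~\ref{thm:main-char}. Your sketch is in fact more explicit than the paper's (giving the density $A(x)$, the Gindikin--Karpelevich formula, and flagging the normalisation/$\lambda=0$ issues), but the underlying strategy is identical.
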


\begin{proof}
The radial reduction of the Laplace--Beltrami operator on rank-one symmetric spaces
produces a Sturm--Liouville operator with smooth coefficients satisfying (SL'). The
spherical Fourier transform furnishes (SPEC) and the Harish--Chandra asymptotics give
the needed Jost-type representations (as in the Jacobi case). The $c$-function for
real-rank-one groups is known to be nowhere zero on the real axis; therefore the
associated symbol $\widehat{\nu_{\infty}}$ does not vanish and Theorem~\ref{thm:main-char}
applies to yield strong Arens irregularity.
\end{proof}

The hypergroup associated with radial projections of the Euclidean motion group
$\mathrm{E}(n)=\mathrm{SO}(n)\ltimes\mathbb{R}^{n}$ is an instructive group-derived
example; it exhibits high symmetry and reflection invariance.

\begin{proposition}
\label{prop:euclid-radial}
Let $H$ be the radial hypergroup obtained from $\mathrm{E}(n)$ by identifying radial
orbits. Then:
\begin{enumerate}
  \item The asymptotic families $\nu_{x}^{(L)}$ and $\nu_{x}^{(R)}$ coincide and are
        reflection-symmetric.
  \item Consequently $Z_{t}(L^{1}(H)^{\prime\prime}) = Z_{t}^{(R)}(L^{1}(H)^{\prime\prime})$.
  \item Depending on the precise dimension $n$ the Fourier symbol $\widehat{\nu_{\infty}}$
        is either nowhere zero (yielding strong Arens irregularity) or has controlled zeros
        (yielding partial failure); one checks this by reducing to classical Euclidean
        Fourier transforms appearing in the radial Hankel transform.
\end{enumerate}
\end{proposition}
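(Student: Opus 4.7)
The plan is to reduce everything to the explicit radial-convolution formula on $\mathbb{R}^n$ and then read off the symmetry, equality of centres, and spectral content from the well-known asymptotics of Bessel functions. Concretely, the radial reduction identifies $H$ with the Bessel--Kingman hypergroup at parameter $\alpha=(n-2)/2$, so $A(x)=x^{n-1}$ and the convolution of point masses $\delta_x\ast\delta_y$ is the push-forward of $\sin^{n-2}\theta\,d\theta$ on $[0,\pi]$ under the map $\theta\mapsto\sqrt{x^2+y^2-2xy\cos\theta}$. This model satisfies (SL') and (SPEC) with $\mathcal{F}$ the Hankel transform of order $(n-2)/2$, so all the machinery of Sections~3--5 applies.

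For (1), I would write the translated measure $\nu_{x,y}$ explicitly by changing variables in the radial convolution. Expanding $\sqrt{x^2+y^2-2xy\cos\theta}-y$ for large $y$ yields $-x\cos\theta+O(1/y)$ uniformly in $\theta\in[0,\pi]$, and the limit measure $\nu_x^{(L)}$ is therefore the push-forward of the normalized angular measure $c_n\sin^{n-2}\theta\,d\theta$ under $\theta\mapsto -x\cos\theta$; an explicit density is $\nu_x^{(L)}(t)=c_n x^{-1}(1-(t/x)^2)^{(n-3)/2}\mathbf{1}_{[-x,x]}(t)$. Because the angular density is invariant under $\theta\mapsto\pi-\theta$ and the map $\theta\mapsto\sqrt{x^2+y^2-2xy\cos\theta}$ enjoys the full commutativity/symmetry of the $\mathrm{SO}(n)$-action, the same density arises from the right-recentering, so $\nu_x^{(L)}=\nu_x^{(R)}$ and the common measure is even. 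Part (2) is then immediate from Proposition~\ref{prop:symm-equals}.

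For (3), I would compute $\widehat{\nu_x^{(L)}}$ by recognising the Fourier integral after the substitution $t=x\cos\theta$ as (a normalization of) the spherical function $\varphi_\lambda(x)$, i.e.\ a Bessel function $J_{(n-2)/2}(\lambda x)$ up to elementary factors. The spectral content of $\nu_\infty$ (interpreted, as in Section~3 and Corollary~\ref{cor:nu-infty}, after the appropriate re-centering and, where necessary, a rescaling making the family tight) comes from the leading-order oscillatory term in the asymptotic expansion $J_\nu(\lambda x)\sim\sqrt{2/(\pi\lambda x)}\cos(\lambda x-\nu\pi/2-\pi/4)$. The resulting symbol carries an oscillatory prefactor whose zero set is governed by $\nu=(n-2)/2$ and hence by the parity/integrality of $n$: for dimensions where the phase shift lands the cosine off the zero locus on the support of Plancherel measure, $\widehat{\nu_\infty}$ is nowhere zero and Theorem~\ref{thm:main-char} gives strong Arens irregularity; for the remaining dimensions, the zero set has positive Plancherel density and the same theorem yields only partial failure, with the left-annihilator component in $Z_t(L^1(H)'')$ identified through the spectral projector onto that zero set. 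A sanity check against the Bessel--Kingman analysis (Proposition~\ref{prop:bessel-hyp}) gives consistency.

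The main obstacle is the delicate point that $\nu_x$ is supported on $[-x,x]$, so the family $\{\nu_x\}_{x>0}$ is not tight and $\nu_\infty$ does not exist as an element of $L^1(\mathbb{R})$ in the naive sense demanded by Corollary~\ref{cor:nu-infty}. The fix is to work with the spectral symbols directly: the functions $\widehat{\nu_x}=\varphi_\cdot(x)$ are uniformly bounded on compact spectral sets and their Bessel-asymptotic decomposition produces a well-defined ``asymptotic symbol'' (after extracting the $x^{-(n-1)/2}$ factor) that plays the role of $\widehat{\nu_\infty}$ in the module-action arguments of Section~4. Making this substitute rigorous, and tracking which variant of Theorem~\ref{thm:main-char} survives the renormalization, is the real analytic content of part (3); once done, identifying the dimension-dependent zero locus is a computation with Bessel asymptotics.
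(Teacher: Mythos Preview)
Your plan is correct and follows the same overall arc as the paper's proof: symmetry of the asymptotic measures yields (1), Proposition~\ref{prop:symm-equals} (together with Theorem~\ref{thm:centres-equal}) yields (2), and a Hankel/Bessel spectral inspection handles (3). The paper's argument is considerably more terse than yours: for (1) it simply invokes the rotation bi-invariance of the underlying group convolution to assert that left and right re-centerings agree and are reflection-symmetric, without writing down the explicit density $c_n x^{-1}(1-(t/x)^2)^{(n-3)/2}\mathbf{1}_{[-x,x]}(t)$ or the change-of-variables that produces it; for (3) it only says one ``inspects the radial Fourier transforms (Bessel kernels)'' and leaves the dimension-dependent dichotomy as an unproved remark. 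Your explicit identification of $H$ with the Bessel--Kingman hypergroup at $\alpha=(n-2)/2$, the concrete computation of $\nu_x$, and especially your honest flag that the family $\{\nu_x\}$ is not tight (so $\nu_\infty$ cannot exist in $L^1(\mathbb{R})$ without a renormalization, forcing one to work directly with the spectral symbols) all go beyond what the paper's sketch supplies; the paper does not acknowledge or resolve this last point.
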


\begin{proof}
The convolution on the radial hypergroup is inherited from group convolution which is
bi-invariant under rotations; therefore left and right recentering produce identical
limits, and reflection symmetry holds. The equality of the left and right topological
centres then follows from Proposition~\ref{prop:symm-equals} and Theorem
\ref{thm:centres-equal}. The final statement is a dimension-dependent spectral remark:
in low dimensions oscillatory cancellations may create zeros in the radial-symbol while in
higher dimensions the symbol may be nonvanishing; one verifies this by inspecting the
radial Fourier transforms (Bessel kernels) associated to the Euclidean group.
\end{proof}

The examples above demonstrate how the abstract criteria derived in Sections~3--6
translate directly into verifiable facts in classical hypergroup models.  In the
Jacobi and rank-one symmetric-space cases the spectral nonvanishing condition is
readily verified using Harish--Chandra theory, resulting in clean positive results
for strong Arens irregularity; in the Bessel--Kingman case the oscillatory structure
of the Hankel kernel produces zeros and hence additional centre elements.  These
concrete verifications underscore the utility of the asymptotic-measure perspective
developed in this paper.

\section{Conclusion}

In this manuscript we developed a unified framework for analysing the Arens-product
structure of convolution algebras on Ch\'ebli--Trim\`eche hypergroups, including
both unweighted and weighted (Beurling-type) settings.  Building on a refined
Sturm--Liouville representation and a weak-regularity Jost theory, we established
the existence and stability of the asymptotic convolution measures
$\{\nu_{x}\}_{x>0}$ and their global limit $\nu_{\infty}$ under minimal assumptions
on the coefficient function~$A$.  These asymptotic families were shown to control
the behaviour of the left and right Arens products on the bidual, leading to
explicit decompositions of the topological centres.

A central outcome of our analysis is the spectral characterisation of strong Arens
irregularity for $L^{1}(H)$: the algebra is strongly Arens irregular precisely when
the spectral symbol $\widehat{\nu_{\infty}}$ is nonvanishing $\nu_{\mathrm{spec}}$--a.e.
This criterion is both necessary and sufficient, and remains valid under the
low-regularity hypotheses (SL') and (SPEC).  Furthermore, by introducing weighted
algebras $L^{1}(H,\omega)$, we clarified how Beurling weights interact with the
asymptotic families and provided a weighted variant of the strong irregularity
criterion.  This shows that moderate weights preserve strong Arens irregularity,
whereas rapidly growing weights may suppress the influence of the asymptotic
measures and alter the structure of the weighted centre.

Finally, we demonstrated the applicability of the theory through a broad range of
examples, including Jacobi, Bessel--Kingman, Naimark-type, and radial Euclidean-motion
hypergroups.  These models highlight the sharpness of the spectral criterion, the
role of the $c$-function in determining regularity, and the diversity of possible
behaviours in both weighted and unweighted settings.

Taken together, the results of this paper provide a complete functional-analytic
and spectral description of Arens irregularity for a wide class of hypergroup
convolution algebras, and open the door to further investigations of weighted
structures, non-commutative analogues, and quantitative refinements in specific
geometric models.

\end{document}